\newtheorem{thm}{Theorem}[section]
\newtheorem*{thm*}{Theorem}
\newtheorem{prop}[thm]{Proposition}
\newtheorem{cor}[thm]{Corollary}
\newtheorem*{cor*}{Corollary}
\newtheorem{lem}[thm]{Lemma}
\newtheorem*{lem*}{Lemma}
\newtheorem*{oquest*}{Open Question}
\theoremstyle{remark}
\theoremstyle{remark}
\newtheorem*{rmk*}{Remark}
\theoremstyle{definition}
\newtheorem{defn}[thm]{Definition}
\theoremstyle{definition}
\newtheorem*{defn*}{Definition}
\numberwithin{equation}{section}%numbers equations by section
\newcommand{\vect}[1]{\bm{#1}}
\newcommand{\st}{\mid} %such that for sets
\newcommand{\figures}{figures}
\newcommand{\C}{\mathbb{C}}
\newcommand{\Z}{\mathbb{Z}}
\newcommand{\E}{\mathbb{E}}
\newcommand{\R}{\mathbb{R}}
\newcommand{\T}{\mathbb{T}}
\newcommand{\q}{\vect{q}}
\newcommand{\p}{\vect{p}}
\newcommand{\e}{\vect{e}}
\newcommand{\g}{\vect{g}}
\newcommand{\h}{\vect{h}}
\DeclareMathOperator*{\Vol}{Vol}
\DeclareMathOperator*{\tr}{tr}
\begin{document}

% This is cover.tex

% Insert your information as appropriate.

\title{
Ball Packings with Periodic Constraints
}

\author[R.~Connelly]{Robert Connelly}
\author[J.~Shen]{Jeffrey D. Shen}
\author[A.~Smith]{Alexander D. Smith}
\thanks{This research was partially supported by a Research Experience for Undergraduates at Cornell University.}

\date{\today}

\begin{abstract}

We call a periodic ball packing in $d$-dimensional Euclidean space \emph{periodically (strictly) jammed} with respect to a period lattice $\Lambda$ if there are no nontrivial motions of the balls that preserve $\Lambda$ (that maintain some period with smaller or equal volume). In particular, we call a packing \emph{consistently} periodically (strictly) jammed if it is periodically (strictly) jammed on every one of its periods. After extending a well-known bar framework and stress condition to strict jamming, we prove that a packing with period $\Lambda$ is consistently strictly jammed if and only if it is strictly jammed with respect to $\Lambda$ and consistently periodically jammed. We next extend a result about rigid unit mode spectra in crystallography to characterize periodic jamming on sublattices. After that, we prove that there are finitely many strictly jammed packings of $m$ unit balls and other similar results.  An interesting example shows that the size of the first sublattice on which a packing is first periodically unjammed is not bounded. Finally, we find an example of a consistently periodically jammed packing of low density $\delta = \frac{4 \pi}{6 \sqrt{3} + 11} + \epsilon \approx 0.59$, where $\epsilon$ is an arbitrarily small positive number. Throughout the paper, the statements for the closely related notions of \emph{periodic infinitesimal rigidity} and \emph{affine infinitesimal rigidity} for tensegrity frameworks are also given.

\end{abstract}

\maketitle

\section{Introduction}

This paper lies at the intersection of two areas of recent interest. Principally, it is about jammed hard-ball packings, which can serve as a useful model for granular materials \cite{Torq10}. To call a packing jammed is to say that the arrangement of balls admit no motion. However, for infinite packings, this notion makes little sense without some sort of boundary condition, as we can always move the balls linearly away from some point, say, the origin. Thus, we focus on infinite packings that are periodic and define several periodic boundary conditions that each make jammedness a meaningful concept. In defining this, we run into the rigidity theory of periodic bar-and-joint frameworks, objects that have been used in recent work to model zeolites and perskovites \cite{Pow11a, Bor10, Malestein-Theran-2011}.

There is an important connection between the concepts of bar-and-joint frameworks and sphere packings that comes naturally from the study of tensegrities. We call a packing periodic with respect to a period lattice $\Lambda$ \emph{periodically jammed} with respect to $\Lambda$ if there is no nontrivial motion of the packing that maintains periodicity with respect to $\Lambda$, i.e. the only such motions result in a congruent packing. In \cite{Torq10}, this corresponds to the term \emph{collectively jammed}, which in this paper we use to denote the equivalent notion of finite packings on a torus $\E^d / \Lambda$. Periodic jamming is closely related to the concept of \emph{periodic infinitesimal rigidity} seen with bar-and-joint frameworks in \cite{Ross11, Malestein-Theran-2011, Bor10}. In addition, if there is no nontrivial motion of the packing that maintains periodicity with respect to some continuously-changing lattice $\Lambda(t)$ with nonincreasing volume, the packing is called \emph{strictly jammed}. This notion is closely related to the concept of \emph{affine infinitesimal rigidity} \cite{Pow11a, Borcea-Streinu-2011}, and is by definition stronger than that of collective jamming.

The connection between periodic jamming for packings and periodic infinitesimal rigidity for bar-and-joint frameworks is commonly made by reducing the first condition to a tensegrity framework with points at the centers of the balls and edges, called struts, between tangent balls that are constrained to not decrease in length \cite{CD12, Con08, Connelly-PackingI}. From a classic result in \cite{Roth81}, it can be shown that periodic infinitesimal rigidity on tensegrities reduces to periodic infinitesimal rigidity of the corresponding bar framework and the existence of a negative equilibrium stress on the tensgrity. In \cite{Donev1, Con08}, strict jamming again reduces to an infinitesimal condition on the tensegrity as well as the lattice. Our first main theorem proves a variant of the bar and stress condition for strict jamming and affine infinitesimal rigidity.

In this discussion, one should note that a packing or framework can be periodic with respect to a number of lattices, not just some arbritrary lattice $\Lambda$. In fact, there are a number of examples of periodic packings and frameworks that are jammed and periodically infinitesimally rigid with respect to some lattices but not others \cite{Con08}. Thus, we also look at periodic boundary conditions that do not reference a specific lattice, calling a packing \emph{consistently} periodically or strictly jammed if it is periodically or strictly jammed on all of its period lattices. We use similar terminology with regards to frameworks, although other terms have been given, such as \emph{periodically infinitesimally ultrarigid} for our term \emph{consistently periodically infinitesimally rigid} \cite{Borcea12}. We prove that a periodic framework is \emph{consistently affinely infinitesimally rigid} if and only if it is affinely infinitesimally rigid and consistently periodically infinitesimally rigid. The bar and stress condition for strict jamming gives the packing analogue: consistent strict jamming is equivalent to an arbitrary strict jamming and consistent periodic jamming.

Thus, we now consider consistent periodic jamming and the closely related problem of considering sublattices of a periodically jammed packing. The RUM spectrum used to find ``flexible modes'' of crystals modeled as ball-and-joint frameworks \cite{Weg07, Dove07, Owen11,Pow11} gives a way to characterize periodic infinitesimal ridigity on certain sublattices. We formalize this result and apply it to characterize all periodic jamming on all sublattices of a packing.

We then give a number of finiteness results on taking sublattices of a periodic packing. Most prominently, there are only finitely many strictly jammed packings with the number and radii of the translationally distinct balls fixed. We also give an interesting twenty-disk packing that can be made first unjammed on unboundedly large sublattices. Furthermore, a realization of the packing's bar framework is consistently infinitesimally rigid, but not \emph{phase periodically infinitesimally rigid}, a notion borrowed from \cite{Pow11}. The calculations require an idea borrowed from \emph{parallel drawing} in \cite{Crapo-Whiteley}. We consider infinitesimal movement of the tensegrity as motions of the edges instead of motions of the points, and so we call it an \emph{infinitesimal edge flex}. This notion is used to find consistently periodically jammed packing with the low density $\delta = \frac{4 \pi}{6 \sqrt{3} + 11} + \epsilon \approx 0.58742$, where $\epsilon$ is an arbitrarily small positive number.

In Section~2~and~3, we develop formal definitions regarding packings, and give some basic results from rigidity theory. In Section~4~and~5, we prove the bar and stress condition for strictly jammed packings, give a proof of the theorem concerning consistent strict jamming is given, and add some relevant discussion. In Section~6~and~7, we characterize periodic jamming on sublattice of a packing. Section~8~and~9 define an edge flex and give the low-density and twenty-disk packing examples. Appendix A gives the full calculation for the twenty-disk packing.

\section{Definitions and Notation}
\label{sect:def}
We consider \emph{packings} of balls in the Euclidean space $\E^d$ with disjoint interiors. For any basis $B = \{\g_1, \ldots, \g_d\}$ of $\E^d$, let $\Lambda(B)=\{\lambda_1 \g_1+\cdots+ \lambda_d \g_d \mid \lambda_i \in \Z \}$ be the lattice composed of all integral linear combinations of vectors in $B$. A packing $P$ is said to be \emph{periodic} with respect to a lattice $\Lambda$ if translations by elements in $\Lambda$ lead to an identical packing; $\Lambda$ is then called a \emph{period}. $\Lambda$ acts as an automorphism group of the packing, and we require that the packing has finitely many orbits of balls under $\Lambda$. There can be infinitely many orbits if there are vanishingly small balls, but in this paper we explicitly do not call such ill-behaved arrangements periodic packings. A periodic packing is called \emph{periodically jammed} with respect to $\Lambda$ if the only continuous motions of the balls maintaining $\Lambda$ as a period result in a congruent packing.

We can consider packings periodic with respect to a lattice $\Lambda$ as a finite packing on the torus $\T^d(\Lambda) = \E^d / \Lambda$. A packing on the torus is called \emph{collectively jammed} if the only continuous motions of the balls result in a congruent packing. Since the only continuous isometries of the torus $\T^d(\Lambda)$ are translations, a packing on $\T^d(\Lambda)$ is collectively jammed if and only if the only continuous motions are translations of the entire packing. Thus, a periodic packing in $\E^d$ is periodically jammed with respect to $\Lambda$ if and only if the only continuous motions maintaining the same period are translations of the entire packing. Note that a packing in $\E^d$ is periodically jammed with respect to $\Lambda$ if and only if the corresponding packing on $\T^d(\Lambda)$ is collectively jammed.

We say that a packing in $\E^d$ is \emph{consistently} periodically jammed if it is periodically jammed with respect to every period $\Lambda$ that it has. Analogously, we can say that a packing on the torus $\T^d(\Lambda)$ is \emph{consistently} collectively jammed if it is collectively jammed on every finite cover of the torus, $\T^d(\Lambda')$, $\Lambda'$ a sublattice of $\Lambda$. With the proposition below, we see that consistent collective jamming and consistent periodic jamming are compatible.
\begin{prop}\label{prop:consIsCons}
A packing in $\mathbb{E}^d$ that is periodic with respect to $\Lambda$ is consistently periodically jammed if and only if it is periodically jammed with respect to every sublattice of $\Lambda$. 
%Equivalently, if $P$ is a packing on a torus $\T^d(\Lambda)$ and $P'$ is the corresponding packing on a finite cover $\T^d(\Lambda')$ of $\T^d(\Lambda)$, then $P$ is consistently collectively jammed if and only if $P'$ is consistently collectively jammed.
\end{prop}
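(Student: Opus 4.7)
The plan is to handle the two directions of the biconditional separately.

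The forward implication is immediate: if the packing is periodically jammed on every one of its periods, it is in particular jammed on every sublattice of $\Lambda$, because every full-rank sublattice of $\Lambda$ is itself a period of the packing (translations in the sublattice still preserve the balls, and the orbit count remains finite).

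For the reverse direction, I would take an arbitrary period $\Lambda''$ of the packing and reduce periodic jamming with respect to $\Lambda''$ to periodic jamming with respect to $\Lambda \cap \Lambda''$, which I aim to show is a sublattice of $\Lambda$. The main obstacle, and essentially the whole content of the argument, will be verifying that $\Lambda \cap \Lambda''$ has finite index in $\Lambda$ (equivalently, is full-rank): a priori, two lattices in $\E^d$ need not intersect in a full-rank lattice. My plan here is to exploit the finite-orbit hypothesis built into the definition of a periodic packing. Let $X$ denote the finite set of $\Lambda$-orbits of ball centers. Translation by any $v \in \Lambda''$ preserves the packing and, commuting with $\Lambda$-translations, descends to a well-defined permutation of $X$. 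This gives a homomorphism $\phi \colon \Lambda'' \to \mathrm{Sym}(X)$ whose kernel consists of those $v \in \Lambda''$ satisfying $p + v \in p + \Lambda$ for every center $p$, i.e.\ of those $v \in \Lambda'' \cap \Lambda$. Since $\mathrm{Sym}(X)$ is finite the kernel has finite index in $\Lambda''$, so $\Lambda \cap \Lambda''$ is a full-rank lattice, hence automatically a finite-index sublattice of $\Lambda$ as well.

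Once commensurability is in hand, the conclusion is formal: any continuous motion of the balls that maintains $\Lambda''$ as a period must, a fortiori, maintain the smaller sublattice $\Lambda \cap \Lambda''$ as a period, since $\Lambda \cap \Lambda'' \subseteq \Lambda''$. By hypothesis the packing is periodically jammed with respect to $\Lambda \cap \Lambda''$, so every such motion yields a congruent packing, establishing periodic jamming with respect to the arbitrary period $\Lambda''$ and completing the equivalence.
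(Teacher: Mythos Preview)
Your proof is correct. The paper actually omits the proof of this proposition in the final text; there is a commented-out draft proof that takes the same intersection-of-lattices approach but only treats the case where the two lattices in play are both sublattices of a common ambient lattice, where the second isomorphism theorem immediately gives that their intersection has finite index in each.

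Your argument is more careful on exactly the point that matters for the proposition as stated: an arbitrary period $\Lambda''$ is not a priori a sublattice of $\Lambda$, and two full-rank lattices in $\E^d$ can have trivial intersection, so commensurability genuinely needs to be argued. Your use of the homomorphism $\Lambda'' \to \mathrm{Sym}(X)$ induced by the action on the finite set of $\Lambda$-orbits is the right way to extract this from the finite-orbit hypothesis in the paper's definition of a periodic packing. This is essentially equivalent to first establishing that the packing has a finest period lattice containing every period, after which the paper's second-isomorphism-theorem step would apply; your route just does it in one shot.
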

\begin{comment}
\begin{proof}
We prove the version for the torus. A finite cover of $\T^d(\Lambda')$ is also a finite cover of $\T^d(\Lambda)$, so if $P$ is consistently collectively jammed, so is $P'$. Conversely, if $P$ is not consistently collectively jammed, then it is unjammed on some finite cover $\T^d(\Lambda_0)$, $\Lambda_0$ a sublattice of $\Lambda$. By the second isomorphism theorem for groups, we know that $\Lambda' \cap \Lambda_0$ must be a $d$-dimensional sublattice of $\Lambda_0$ and $\Lambda'$. We find that $P$ is collectively unjammed on $\T^d(\Lambda' \cap \Lambda_0)$, and we also find that $\T^d(\Lambda' \cap \Lambda_0)$ is a finite cover of $\T^d(\Lambda')$, so $P'$ is collectively unjammed on a finite cover. Then, if $P$ is not consistently collectively jammed, $P'$ is not consistently collectively jammed. This proves the proposition.
\end{proof}
\end{comment}
\begin{comment}
In \cite{Borcea12} the word \emph{ultrarigid} was defined for periodic bar frameworks, where the distance between adjacent vertices are fixed, and we have simply expropriated it to include the situation for packings, where the distance between adjacent vertices are constrained not to decrease.  Since one of the necessary conditions for a jammed packing is that the underlying bar framework be infinitesimally rigid, the same word for both cases seems natural.
\end{comment}

If the lattice is generated by the basis $B = \{\g_1, \ldots, \g_d\}$, then the fundamental region of the period has volume
\begin{equation}
\Vol(\T^d(\Lambda(B))) = \det(\g_1, \g_2, \ldots, \g_d).
\end{equation}
A periodic packing is called \emph{strictly jammed} with respect to period $\Lambda$ if the only continuous motions of the balls, allowing the period to change continuously without increasing its volume, result in packings congruent to the original. Similarly, define a periodic packing to be \emph{consistently strictly jammed} if it is strictly jammed with respect to every period. We note that Proposition \ref{prop:consIsCons} can be generalized to this; a packing periodic with respect to $\Lambda'$ is consistently strictly jammed if it is strictly jammed with respect to every sublattice of $\Lambda'$.

The \emph{density} of a periodic packing is the total volume of the balls in a period divided by the volume of the period. 

\section{Results from Rigidity Theory}

We define an \emph{abstract tensegrity} to be a simple graph $G = (V; B,C,S)$ with countable vertices $V$ each of finite degree and edges $E$ being the disjoint union of subsets $B$, $C$, and $S$, which are referred to as the set of \emph{bars}, \emph{cables}, and \emph{struts}, respectively. As in Definition~2.1 from \cite{Borcea-Streinu-2011}, we may define an abstract tensegrity $G$ to be $d$-periodic with respect to $\Gamma$ if $\Gamma$ is a free abelian group of automorphisms of $G$ with rank $d$ having no fixed points and a finite number of vertex orbits. Note here that elements in $\Gamma$ are also required to preserve membership of edges in $B$, $C$, and $S$.

Then, let a \emph{tensegrity} $(G, \p)$ be a realization of an abstract tensegrity $G = (V; B,C,S)$ in $\E^d$ formed by assigning each vertex $v_i$ in $V$ to the point $\p_i$ in the countable sequence $\p = (\p_1,\p_2,\ldots)$. It is useful to consider the edges in $E = B \cup C \cup S$ as directed edges $\e_k = \p_j - \p_i$ between $i$ and $j$. We can then constrain each \emph{cable} to not increase in length, \emph{bar} to stay the same in length, and \emph{strut} to not decrease in length. A tensegrity is \emph{rigid} if the only continuous motions of the points obeying the constraints result in a congruent configuration.

Now, a tensegrity $(G, \p)$ in $\E^d$ is said to be \emph{periodic} with respect to a lattice $\Lambda$ if $G$ is $d$-periodic with respect to $\Gamma$ and $\Lambda$ is an automorphism group of the set of members of $\p$, acting on $\p$ as $\Gamma$ does on $V$. A periodic tensegrity is \emph{periodically rigid} if the only continuous motions of the points that maintain the period $\Lambda$ and obey the constraints result in a congruent configuration, i.e. translations of the entire tensegrity.

We can consider tensegrities on $\T^d(\Lambda)$ as a finite sequence of points $\p = (\p_1,\ldots,\p_n)$ and edges between points represented by vectors $\e_k$ in the sequence $\e = (\e_1,\ldots,\e_{|E|})$. Since each $\e_k$ is directed, we denote such a tensegrity as $(G, \p, \e)$ where $G$ is a finite directed multigraph with sets of bars, cables, and struts that are symmetric, i.e. containing $(u,v)$ if and only if containing $(v,u)$. A tensegrity $(G, \p, \e)$ on $\T^d(\Lambda)$ is rigid if and only if the only continuous motions of the points that obey the constraints are continuous translations of the entire tensegrity.

Now let the \emph{graph of the packing} denote the graph where vertices are placed at disk centers and edges are placed between the centers of tangent disks. We next define the \emph{corresponding strut tensegrity} as in \cite{CD12, Connelly-PackingI}; every edge in the graph of the packing becomes a strut in the corresponding tensegrity, preventing the edges from decreasing in length. Since there are only finitely many points and edges in a packing on a torus $\T^d(\Lambda)$, the packing and the strut tensegrity are related in the following way.

\begin{thm}\label{thm:col-jammed-rigid}
Given a packing $P$ in $\E^d$ periodic with respect to the lattice $\Lambda$, the following are equivalent:
\begin{enumerate}
\item[(a)] $P$ is periodically jammed with respect to $\Lambda$.
\item[(b)] The corresponding packing on $\T^d(\Lambda)$ is collectively jammed.
\item[(c)] The corresponding strut tensegrity on $\E^d$ is periodically rigid with respect to $\Lambda$.
\item[(d)] The corresponding strut tensegrity on $\T^d(\Lambda)$ is rigid.
\end{enumerate}
\end{thm}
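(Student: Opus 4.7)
The plan is to establish the four-way equivalence by proving three independent links: (a)$\Leftrightarrow$(b), (c)$\Leftrightarrow$(d), and (b)$\Leftrightarrow$(d); the remaining equivalences then follow by transitivity. The first two are bookkeeping about the torus construction; the substantive content is (b)$\Leftrightarrow$(d), which is where jamming meets tensegrity rigidity.

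For (a)$\Leftrightarrow$(b) and (c)$\Leftrightarrow$(d), I would invoke the natural correspondence between $\Lambda$-periodic objects in $\E^d$ and finite objects on $\T^d(\Lambda)=\E^d/\Lambda$. A continuous motion of the balls (or vertices) in $\E^d$ that preserves $\Lambda$ as a period descends to a motion on the torus, and conversely every torus motion lifts uniquely to a $\Lambda$-periodic motion upstairs. The only continuous isometries of $\T^d(\Lambda)$ are translations, and these correspond precisely to global translations of the entire periodic structure in $\E^d$. One then checks that the disjoint-interior condition on balls and the strut constraint on edges are both local and translation-invariant, so they are preserved by the projection and its lift. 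These two equivalences are then immediate from the definitions.

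The heart of the matter is (b)$\Leftrightarrow$(d). The direction (d)$\Rightarrow$(b) is straightforward: if $\p(t)$ is a continuous motion of the packing on $\T^d(\Lambda)$ maintaining disjoint interiors, then any pair of balls that are tangent at $t=0$ (and thus at center distance exactly $r_i+r_j$) must have non-decreasing center distance along $\p(t)$, which is exactly the strut constraint; hence every packing motion is a motion of the corresponding strut tensegrity, and rigidity forces $\p(t)$ to be a translation. For (b)$\Rightarrow$(d), I would start with a nontrivial motion of the strut tensegrity and promote it to a nontrivial motion of the packing on a small time interval. The key input is the compactness of $\T^d(\Lambda)$: there are only finitely many ball pairs, so the non-tangent pairs have center distance strictly exceeding $r_i+r_j$ by some uniform $\varepsilon_0>0$. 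By continuity of $\p(t)$, one can restrict to a time interval on which every non-tangent pair remains separated by at least $\varepsilon_0/2$, while the strut condition keeps tangent pairs from overlapping. The resulting restricted motion is a valid packing motion, and since it is not a translation of the tensegrity, it is not a translation of the packing.

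The main obstacle will be managing the (b)$\Rightarrow$(d) direction cleanly, in particular verifying that the uniform separation $\varepsilon_0$ depends only on the initial packing and not on the chosen tensegrity motion, so that the time interval on which the motion remains a valid packing motion is guaranteed to be positive. This is standard once one formalizes jamming in terms of the existence of any nontrivial motion, but the translation between ``no motion for any positive time'' (the jamming formulation) and ``no nontrivial analytic or smooth motion'' (the tensegrity formulation) requires a small amount of care, which I would handle by working with motions defined on arbitrarily short intervals and appealing to compactness to extract the uniform gap.
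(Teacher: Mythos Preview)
Your proposal is correct and follows the natural line of argument. The paper itself does not give a proof of this statement at all: immediately after stating the theorem it writes ``We omit the proof of this easy theorem,'' so there is no approach to compare yours against. Your decomposition into the torus--Euclidean correspondence for (a)$\Leftrightarrow$(b) and (c)$\Leftrightarrow$(d), together with the compactness argument for (b)$\Leftrightarrow$(d), is exactly what one would expect the authors had in mind.

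One small point worth tightening in (b)$\Rightarrow$(d): when you restrict a nontrivial tensegrity motion $\p(t)$ to a short interval $[0,\delta]$, you need to ensure the restricted motion is still nontrivial, i.e.\ that $\p(t)$ is non-congruent to $\p(0)$ for some $t\in(0,\delta]$. If the original motion first leaves the congruence class of $\p(0)$ only at some time $t_1>\delta$, you can translate in time and space (using that torus isometries are translations) to produce a new motion starting at $\p(0)$ that leaves the congruence class immediately. You allude to this in your final paragraph, and it is indeed routine, but it is the one place where a reader might want an explicit sentence.
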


We omit the proof of this easy theorem.

Therefore, we consider a tensegrity $(G,\p,\e)$ on a torus $\T^d(\Lambda)$. An \emph{infinitesimal flex} $\p'$ of $G$ is a sequence of vectors $(\p_1', \ldots, \p_n')$ in $\E^d$ such that for every edge $\e_k$ connecting $\p_i$ to $\p_j$ in $\T^d(\Lambda)$,
\begin{equation}
\label{eq:infFlex_cond}
\e_k \cdot (\p_j' - \p_i')
\begin{cases}
= 0, & \mbox{if } \e_k \in B \\
\leq 0, & \mbox{if } \e_k \in C \\
\geq 0, & \mbox{if } \e_k\in S
\end{cases}.
\end{equation}
On a torus, an infinitesimal flex is called \emph{trivial} if $\p_1' = \cdots = \p_n'$. A tensegrity is \emph{infinitesimally rigid} if the only flexes are trivial. We recall the following theorem, whose proof is split between \cite{Roth81} and \cite{Con08}.

\begin{thm}\label{thm:rigid-inf-rigid}
A tensegrity $(G, \p, \e)$ on $\T^d(\Lambda)$ is rigid if it is infinitesimally rigid. The converse is true if  $(G, \p, \e)$ is a strut tensegrity.
\end{thm}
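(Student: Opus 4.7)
The plan is to handle the two directions separately, each by contraposition, following the standard Asimow–Roth strategy adapted to the torus (where the trivial motions are exactly the simultaneous translations).

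Forward direction (infinitesimal rigidity implies rigidity). Suppose $(G,\p,\e)$ is not rigid, so some nontrivial continuous motion $\p(t)$ with $\p(0)=\p$ obeys the bar/cable/strut constraints. Since the configurations satisfying these polynomial equalities and inequalities form a semi-algebraic set on which translations act algebraically, the curve selection lemma lets me replace $\p(t)$ by a real-analytic path with the same initial value which still is not contained in the translation-orbit of $\p$. Let $k\ge 1$ be the smallest integer for which $\p^{(k)}(0)$ is not of the form $(v,v,\ldots,v)$; such $k$ must exist, since otherwise analyticity would write $\p(t)$ itself as $\p$ plus a single time-dependent translation. Subtract from every coordinate of $\p(t)$ the polynomial translation $T(t)=\sum_{j=1}^{k-1}(t^{j}/j!)\,v_{j}$ determined by the earlier derivatives; translations preserve all edge lengths, so the resulting analytic path still satisfies every constraint, still has nontrivial $k$-th derivative, and now has vanishing derivatives of orders $1,\ldots,k-1$ at $0$.

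For each edge $\e_{\ell}$ between $\p_{i}$ and $\p_{j}$, set $f_{\ell}(t)=\|\p_{j}(t)-\p_{i}(t)\|^{2}$. Applying the Leibniz rule and using $\p^{(j)}(0)=0$ for $0<j<k$, every cross term vanishes and we obtain $f_{\ell}^{(j)}(0)=0$ for $0<j<k$ and $f_{\ell}^{(k)}(0)=2\,\e_{\ell}\cdot(\p_{j}^{(k)}(0)-\p_{i}^{(k)}(0))$. The sign of this $k$-th derivative is forced by the constraint type: bars give equality, whereas for cables the inequality $f_{\ell}(t)\le f_{\ell}(0)$ near $t=0$ combined with the vanishing of all lower derivatives forces $f_{\ell}^{(k)}(0)\le 0$, and symmetrically $\ge 0$ for struts. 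Therefore $\p^{(k)}(0)$ satisfies \eqref{eq:infFlex_cond} and is nontrivial, contradicting infinitesimal rigidity.

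Backward direction (for strut tensegrities). Assume $B=C=\emptyset$ and that $(G,\p,\e)$ has a nontrivial infinitesimal flex $\p'=(\p_{1}',\ldots,\p_{n}')$. Consider the linear path $\p(t)=\p+t\p'$. For each strut $\e_{\ell}=\p_{j}-\p_{i}$,
\[
\|\p_{j}(t)-\p_{i}(t)\|^{2}=\|\e_{\ell}\|^{2}+2t\,\e_{\ell}\cdot(\p_{j}'-\p_{i}')+t^{2}\|\p_{j}'-\p_{i}'\|^{2}\ \ge\ \|\e_{\ell}\|^{2}
\]
for every $t\ge 0$, because both the linear term (by the strut condition \eqref{eq:infFlex_cond}) and the quadratic term are nonnegative. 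Hence the linear path is a legitimate motion of the strut tensegrity, and it is nontrivial because $\p'$ is nontrivial, contradicting rigidity. This argument singles out struts: for cables or bars the quadratic error term has the wrong sign, and higher-order corrections would have to be constructed by hand, which is precisely why the converse fails in general.

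The genuinely delicate step is the analyticity upgrade at the start of the forward direction: once an analytic nontrivial motion is in hand, the subtraction-of-translations trick and the Leibniz computation run mechanically, but it is the semi-algebraic curve-selection input that makes the implication work for tensegrities and not only for bar frameworks, and it is the step most directly drawing on \cite{Roth81}. By contrast, the strut converse from \cite{Con08} reduces to the one-line linear-motion computation above.
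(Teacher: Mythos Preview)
The paper gives no proof of its own here; it simply attributes the forward direction to \cite{Roth81} and the strut converse to \cite{Con08}. Your argument reproduces exactly the standard proof from those sources, correctly adapted to $\T^d(\Lambda)$ where the trivial motions are the simultaneous translations, and both the curve-selection/Leibniz step and the linear-motion strut argument are sound. One cosmetic point: on the torus the quantity you are differentiating is the squared edge length $\|\e_{\ell}+(\p_{j}(t)-\p_{j}(0))-(\p_{i}(t)-\p_{i}(0))\|^{2}$ rather than a literal $\|\p_{j}(t)-\p_{i}(t)\|^{2}$, but since only the increment $\p_{j}(t)-\p_{i}(t)$ enters, your computations go through verbatim.
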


A \emph{stress} on a tensegrity is a sequence of scalar weights assigned to the edges $\e_k$ such that the weight is negative on struts and positive on cables and so that the stresses on symmetric edges $\e_k$ and $-\e_k$ are the same. A stress $(\omega_1, \dots, \omega_{|E|})$ is called an \emph{equilibrium stress} if for every vertex $\p_j$,
\begin{equation}\label{eq:eq-stress}
\sum_{\e_k \in E_j} \omega_k \e_k = 0
\end{equation}
where $E_j$ is the set of edges starting at $\p_j$. The notion of a stress gives an intuition for a version of Farkas' Lemma from \cite{Roth81}.

\begin{lem}\label{lem:roth-whiteley}
Suppose $Y = \{\vect{y}_1, \ldots, \vect{y}_k\} \subset \R^d$. Then the set 
\begin{equation}
Y^+ = \{\vect{\mu} \in \mathbb{E}^d \mid \vect{\mu} \cdot \vect{y} \geq 0, \,\,\mbox{for all } \vect{y} \in Y\}
\end{equation}
is the orthogonal complement $Y^\bot$ of $Y$ if and only if there exist positive scalars $\lambda_1, \ldots, \lambda_k$ such that $\sum_{i=1}^k \lambda_i \vect{y}_i = 0$. Otherwise, $Y^+$ strictly contains $Y^\bot$.
\end{lem}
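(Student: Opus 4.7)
The plan is to prove both directions directly, with the harder one reducing to a standard separation argument. The inclusion $Y^\perp \subseteq Y^+$ is immediate from the defining inequalities, so the content lies in characterizing when the reverse inclusion holds. For the ``if'' implication, assume $\sum_i \lambda_i y_i = 0$ with every $\lambda_i > 0$. Given any $\mu \in Y^+$, dotting $\mu$ into this relation yields $\sum_i \lambda_i (\mu \cdot y_i) = 0$, a sum of nonnegative summands with strictly positive weights; hence each $\mu \cdot y_i = 0$ and $\mu \in Y^\perp$, giving $Y^+ = Y^\perp$.

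For the reverse implication I would argue the contrapositive: assuming no strictly positive relation $\sum_i \lambda_i y_i = 0$ exists, I will exhibit a vector in $Y^+ \setminus Y^\perp$. Let $C = \{\sum_i \lambda_i y_i : \lambda_i \geq 0\}$ be the conical hull of $Y$, which is a closed convex cone because it is finitely generated. The key step is to produce an index $i_0$ with $-y_{i_0} \notin C$. Otherwise, for every $i$ we could write $-y_i = \sum_j \mu_{ij} y_j$ with $\mu_{ij} \geq 0$; summing these $k$ identities gives $\sum_k \bigl(1 + \sum_i \mu_{ik}\bigr) y_k = 0$, a relation with every coefficient at least $1 > 0$, contradicting the hypothesis. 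Fixing such an $i_0$, the hyperplane separation theorem applied to the closed convex cone $C$ and the point $-y_{i_0} \notin C$ (equivalently, Farkas' lemma) supplies $\mu \in \R^d$ with $\mu \cdot y \geq 0$ for all $y \in C$, so $\mu \in Y^+$, and $\mu \cdot (-y_{i_0}) < 0$, so $\mu \cdot y_{i_0} > 0$ and thus $\mu \notin Y^\perp$. This witnesses $Y^+ \supsetneq Y^\perp$.

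The main obstacle is the intermediate reduction step producing some $-y_{i_0} \notin C$: once it is in place, the rest is either a line of direct computation or a routine appeal to separation for a finitely generated (hence closed) convex cone.
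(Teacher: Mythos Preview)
Your proof is correct. The paper does not actually give its own proof of this lemma; it simply states it as ``a version of Farkas' Lemma from \cite{Roth81}'' and immediately moves on to apply it. Your argument is a clean, self-contained verification: the ``if'' direction is the one-line weighted-sum computation, and for the contrapositive your averaging trick (summing the hypothetical expressions $-y_i = \sum_j \mu_{ij} y_j$ to manufacture a strictly positive dependence) is a nice way to isolate an index $i_0$ with $-y_{i_0}\notin C$, after which separation of a point from a closed (finitely generated) convex cone finishes it. So there is nothing to compare against in the paper itself; your write-up supplies what the paper outsources to the reference.
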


Lemma~\ref{lem:roth-whiteley} can be directly applied in the same manner as Theorem~5.2 from \cite{Roth81} to tensegrities on $\T^d(\Lambda)$ to obtain a bar and stress decomposition of infinitesimal rigidity on the quotient torus.

\begin{thm}\label{thm:bar-stress}
A tensegrity on $\T^d(\Lambda)$ is infinitesimally rigid if and only if the corresponding bar tensegrity is infinitesimally rigid and there is an equilibrium stress.
\end{thm}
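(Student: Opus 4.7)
The plan is to mimic the Roth--Whiteley argument (Theorem~5.2 of \cite{Roth81}) in the torus setting, with the space $T \subset \R^{nd}$ of trivial flexes (now just the $d$-dimensional space of global translations, since the torus admits no infinitesimal rotations) playing the role of the rigid-motion space in the Euclidean version.

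For the $(\Leftarrow)$ direction, suppose $\omega$ is an equilibrium stress (so $\omega_k < 0$ on struts and $\omega_k > 0$ on cables) and the bar tensegrity is infinitesimally rigid. Given any infinitesimal flex $\p'$ of $(G,\p,\e)$, the key identity is
\begin{equation*}
\sum_{k} \omega_k\, \e_k \cdot (\p_{j(k)}' - \p_{i(k)}') \;=\; 0 ,
\end{equation*}
which follows by expanding the dot products, regrouping by vertex, and applying \eqref{eq:eq-stress} at each $\p_\ell$ (the symmetric directed edge convention produces a factor that is harmless). Now examine signs termwise: on bars the summand is $0$; on cables $\omega_k > 0$ and $\e_k \cdot (\p_j' - \p_i') \leq 0$; on struts $\omega_k < 0$ and $\e_k \cdot (\p_j' - \p_i') \geq 0$. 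So every term is $\leq 0$ while the sum is $0$, forcing each term to vanish. Because $\omega_k \neq 0$ on every non-bar edge, we conclude $\e_k \cdot (\p_j' - \p_i') = 0$ for every edge, i.e.\ $\p'$ is a flex of the bar tensegrity and hence trivial by hypothesis.

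For $(\Rightarrow)$, the bar rigidity half is immediate: any bar flex satisfies every constraint with equality, so it flexes $(G,\p,\e)$ and is trivial. To extract an equilibrium stress, apply Lemma~\ref{lem:roth-whiteley} inside $\R^{nd}$. For each strut $\e_k$, let $\vect{y}_k \in \R^{nd}$ be the vector representing the linear functional $\p' \mapsto \e_k \cdot (\p_{j(k)}' - \p_{i(k)}')$; for each cable include $-\vect{y}_k$; for each bar include both $\vect{y}_k$ and $-\vect{y}_k$. With $Y$ this finite set, $Y^+$ is exactly the flex space of the tensegrity while $Y^\perp$ is the flex space of the bar tensegrity. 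Since $T$ is automatically contained in $Y^\perp$, infinitesimal rigidity of $(G,\p,\e)$ gives $T \subseteq Y^\perp \subseteq Y^+ = T$, so $Y^+ = Y^\perp$. Lemma~\ref{lem:roth-whiteley} then supplies strictly positive $\lambda_k$ with $\sum_k \lambda_k \vect{y}_k = 0$; reading this off at each vertex yields the equilibrium equation \eqref{eq:eq-stress} for the stress $\omega_k = -\lambda_k$ on struts, $\omega_k = \lambda_k$ on cables, and $\omega_k$ equal to the difference of the two bar coefficients on bars (hence of arbitrary real sign there, as required).

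The main obstacle I anticipate is bookkeeping rather than conceptual: reconciling the symmetric directed-edge convention with the undirected stress formulation so that the Farkas coefficients translate cleanly into a stress with the sign convention given in the paper, and verifying that $T$ really is the full space of trivial flexes on $\T^d(\Lambda)$ so that the reduction to Lemma~\ref{lem:roth-whiteley} is valid (in particular, that the $\vect{y}_k$'s all annihilate $T$, which is the reason the Farkas dichotomy can be applied modulo $T$). Once these conventions are pinned down, the argument is the standard Farkas duality between infinitesimal motions and equilibrium stresses.
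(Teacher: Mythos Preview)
Your proposal is correct and is exactly the approach the paper intends: the paper does not write out a proof of this theorem but simply states that Lemma~\ref{lem:roth-whiteley} ``can be directly applied in the same manner as Theorem~5.2 from \cite{Roth81},'' and your argument is precisely that application. The same $\vect{y}_k$-vector setup and Farkas dichotomy you use is spelled out in the paper's proof of the strict analogue, Theorem~\ref{thm:strict-strut-bar}, so your bookkeeping concerns are already resolved there in essentially the way you describe.
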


Therefore, as a result of Theorems~\ref{thm:col-jammed-rigid},~\ref{thm:rigid-inf-rigid},~and~\ref{thm:bar-stress}, we have the following main theorem from rigidity theory.

\begin{thm}\label{thm:col-jammed-bar-stress}
A packing on $\T^d(\Lambda)$ is collectively jammed if and only if the corresponding bar tensegrity is infinitesimally rigid and there is an equilibrium stress on the corresponding strut tensegrity $(G, \p, \e)$.
\end{thm}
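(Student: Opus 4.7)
The plan is to assemble the theorem as a concatenation of three biconditionals that are already established in this section, applied to the packing and its corresponding strut tensegrity. Specifically, I would invoke Theorems \ref{thm:col-jammed-rigid}, \ref{thm:rigid-inf-rigid}, and \ref{thm:bar-stress} in sequence.

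First, by Theorem \ref{thm:col-jammed-rigid} (equivalence of (b) and (d)), the packing on $\T^d(\Lambda)$ is collectively jammed if and only if the corresponding strut tensegrity $(G,\p,\e)$ on $\T^d(\Lambda)$ is rigid. Second, since $(G,\p,\e)$ is a strut tensegrity, both implications of Theorem \ref{thm:rigid-inf-rigid} apply, so rigidity of $(G,\p,\e)$ is equivalent to its infinitesimal rigidity; here the second sentence of that theorem is essential, as the converse would fail for tensegrities containing cables. Third, applying Theorem \ref{thm:bar-stress} to $(G,\p,\e)$, infinitesimal rigidity of the strut tensegrity is equivalent to the conjunction of infinitesimal rigidity of the corresponding bar tensegrity together with the existence of an equilibrium stress on $(G,\p,\e)$. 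Chaining these three equivalences yields the statement.

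The only real bookkeeping to check is that the ``corresponding bar tensegrity'' furnished by Theorem \ref{thm:bar-stress} (obtained by replacing each strut of $(G,\p,\e)$ with a bar) coincides with the bar tensegrity built directly on the graph of the packing; this is immediate from the definition of the corresponding strut tensegrity, since the underlying edge set is the same in both constructions. One should also note that the equilibrium stress guaranteed by Theorem \ref{thm:bar-stress} is automatically negative on every edge of $(G,\p,\e)$ by the sign convention for struts in the definition of a stress, so nothing further needs to be said about its signs. There is no substantive geometric obstacle: the proof is essentially a two-line corollary of the preceding theorems.
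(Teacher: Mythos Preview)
Your proposal is correct and matches the paper's own treatment exactly: the paper states this theorem immediately after Theorems \ref{thm:col-jammed-rigid}, \ref{thm:rigid-inf-rigid}, and \ref{thm:bar-stress} with the remark that it follows as a direct consequence of those three results, and gives no further argument. Your chain of equivalences and the minor bookkeeping observations are precisely what is needed.
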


\section{Strictly Jammed Packings}
\label{sect:strict}

Let $(G,\p)$ be a tensegrity in $\E^d$ periodic with respect to the lattice $\Lambda$. In this section, we develop a number of results about the strict infinitesimal rigidity of $(G,\p)$ with respect to the lattice $\Lambda$. Now, it is convenient to denote such a periodic tensegrity as $(G, \p, \Lambda)$, where $\p = (\p_1,\ldots,\p_n)$ and $\Lambda$ generate all other points $\p_{(k,\lambda)} = \p_k + \lambda$, where $\lambda \in \Lambda$. Thus, in this section, all notions and statements of infinitesimal flexibility and rigidity are assumed to be made with respect to the given $\Lambda$. However, one should note that the results hold more generally for any period lattice of $(G,\p)$.

Consider a sequence of vectors $\p' = (\p_1', \ldots \p_n')$ and a linear transformation $A$ of $\E^d$. In an equivalent definition to that from \cite{Pow11a}, Call $(\p', A)$ an \emph{affine infinitesimal flex} of $(G, \p, \Lambda)$ if for any edge $\e_k$ connecting $\p_{(i,0)}$ to $\p_{(j,\lambda)}$,
\begin{equation}\label{eq:affif}
\e_k \cdot (\p_j' - \p_i'+A \e_k)
\begin{cases}
= 0, & \mbox{if } \e_k \in B \\
\leq 0, & \mbox{if } \e_k \in C \\
\geq 0, & \mbox{if } \e_k \in S
\end{cases},
\end{equation}
where $\e_k = \p_{(j,\lambda)}-\p_{(i,0)}$. Note here that it is convenient for us to consider the set of edges $E$ as a finite sequence of vectors $\e_k$ starting from points $\p_{(i,0)} = \p_i$, since periodicity forces all other edges to behave similarly.

This is a variant of the definition of an affine infinitesimal flex from \cite{Pow11a}, where the condition was instead on $\e_k \cdot (\p_j' - \p_i' + A\lambda)$. We see affine infinitesimal flexes of the two types correspond with each other. If $(\p', A)$ is an affine infinitesimal flex under our definition, then the flex $(\q', A)$ defined by $\q'_i = \p'_i + A \p_{(i, 0)}$ is an affine infinitesimal flex by Power's definition.

If $\Lambda$ is generated by basis vectors $\g_1,\ldots,\g_d$, then we can consider the change of basis matrix $T = (\g_1 \ldots \g_d)$. $T$ has an inverse $T^{-1}$ with rows $\h_i^T$. We see $\g_i \cdot \h_j$ is one if $i=j$ and is zero otherwise. The infinitesimal flex of the lattice generators are given by the matrix $T' = ( \g'_1 \dots \g'_d) = AT$, where we have used $\g'_i$ to represent the $i^{th}$ column of the matrix. Then we can alternatively call $(\p', \g')$ an affine infinitesimal flex if it satisfies

\begin{equation}
\e_k \cdot (\p_j' - \p_i'+\lambda_{k1}\g_1'+\cdots+\lambda_{kd}\g_d')
\begin{cases}
= 0, & \mbox{if } \e_k \in B \\
\leq 0, & \mbox{if } \e_k \in C \\
\geq 0, & \mbox{if } \e_k \in S
\end{cases},
\end{equation}
for each $\e_k$, where we have taken $\e_k = \lambda_{k1}\g_1 + \cdots + \lambda_{kd}\g_d$. We see that $\h_m \cdot \e_k = \lambda_{km}$.

The infinitesimal area condition of strict jamming, which holds that
\begin{equation}
0 \ge \frac{1}{\det(T)}\left. \frac{d}{dt} \det(T+tAT) \right|_{t=0}
\end{equation}
reduces to
\begin{equation}\label{eq:sif}
\tr(A) = \tr(T^{-1}T') = \h_1\cdot\g_1'+\cdots+\h_d\cdot\g_d' \leq 0,
\end{equation}
We call $(\p', A)$ a \emph{strict infinitesimal flex} if it is an affine infinitesimal flex satisfying \eqref{eq:sif}. (We note that this flex is called `strict' because of its relation to strict jamming and not because \eqref{eq:sif} is strictly satisfied.) An affine or strict infinitesimal flex is called \emph{trivial} if it corresponds to a rigid motion of $\mathbb{E}^d$. Then, $(G, \p, \Lambda)$ is called \emph{affinely infinitesimally rigid} if the only affine infinitesimal flexes are trivial and \emph{strictly infinitesimally rigid} if the only strict infinitesimal flexes are trivial.

\begin{prop}\label{prop:aff-strict}
A periodic bar tensegrity $(G, \p, \Lambda)$ has a nontrivial affine infinitesimal flex if and only if it has a nontrivial strict infinitesimal flex.
\end{prop}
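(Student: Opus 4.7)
The plan is to exploit the fact that for a bar tensegrity, all edge constraints are equalities rather than inequalities, and so the affine infinitesimal flex condition is symmetric under negation $(\p', A) \mapsto (-\p', -A)$. This gives us the freedom to flip the sign of $\tr(A)$ whenever it is inconvenient.

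One direction is immediate: by definition, every strict infinitesimal flex is an affine infinitesimal flex, and triviality is the same notion for both, so a nontrivial strict infinitesimal flex is a nontrivial affine infinitesimal flex.

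For the converse, suppose $(\p', A)$ is a nontrivial affine infinitesimal flex. If $\tr(A) \leq 0$, then by \eqref{eq:sif} the pair $(\p', A)$ is itself a strict infinitesimal flex, and we are done. If instead $\tr(A) > 0$, consider the pair $(-\p', -A)$. Since $G$ is a bar tensegrity, every edge $\e_k$ satisfies the equality
\begin{equation*}
\e_k \cdot (\p_j' - \p_i' + A\e_k) = 0,
\end{equation*}
and this equation is preserved under the substitution $(\p', A) \mapsto (-\p', -A)$. Hence $(-\p', -A)$ is also an affine infinitesimal flex, and it now satisfies $\tr(-A) = -\tr(A) < 0$, so it is a strict infinitesimal flex.

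It remains to see that $(-\p', -A)$ is nontrivial. A trivial affine infinitesimal flex corresponds to an infinitesimal rigid motion of $\E^d$, i.e.\ a pair with $\p_i' = \vect{c} + A \p_i$ for some constant $\vect{c}$ and skew-symmetric $A$; the set of such pairs is a linear subspace of the space of all flex pairs, and in particular is closed under negation. Therefore $(\p', A)$ is trivial if and only if $(-\p', -A)$ is trivial, so the nontriviality of $(\p',A)$ transfers to $(-\p',-A)$. The main subtlety — really the only one — is noticing that the bar condition, unlike the strut or cable conditions, is invariant under sign flip; this is why the statement is given only for bar tensegrities.
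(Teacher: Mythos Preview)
Your proof is correct and follows exactly the paper's approach: negate the flex to flip the sign of $\tr(A)$, using that the bar equality constraints are preserved under $(\p',A)\mapsto(-\p',-A)$. You simply spell out in more detail why the negated flex remains nontrivial, which the paper asserts without comment.
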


\begin{proof}
The if direction is clear. Suppose instead that $(G, \p, \Lambda)$ has a nontrivial affine infinitesimal flex $(\p',A)$. Then, $(-\p',-A)$ is also a nontrivial affine infinitesimal flex. Since at least one of $(\p',A)$ and $(-\p',-A)$ satisfies \eqref{eq:sif}, there exists a nontrivial strict infinitesimal flex.
\end{proof}

\begin{cor}
A periodic bar tensegrity $(G, \p, \Lambda)$ is affinely infinitesimally rigid if and only if it is strictly infinitesimally rigid.
\end{cor}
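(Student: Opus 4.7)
The plan is to derive this corollary directly from Proposition \ref{prop:aff-strict} by taking contrapositives. By the definitions given just above the proposition, $(G,\p,\Lambda)$ is affinely infinitesimally rigid precisely when every affine infinitesimal flex is trivial, i.e.\ when there exists no nontrivial affine infinitesimal flex. Similarly, $(G,\p,\Lambda)$ is strictly infinitesimally rigid precisely when there exists no nontrivial strict infinitesimal flex. Proposition \ref{prop:aff-strict} asserts equivalence of the existence of these two types of nontrivial flexes, and negating both sides of that biconditional yields the corollary.

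Concretely, I would argue as follows. First, if $(G,\p,\Lambda)$ is strictly infinitesimally rigid, then a fortiori any affine infinitesimal flex $(\p',A)$ for which $\tr(A)\le 0$ must be trivial. But by Proposition \ref{prop:aff-strict}, a nontrivial affine infinitesimal flex would produce a nontrivial strict one (by negating $(\p',A)$ if necessary so that $\tr(A)\le 0$), contradicting strict rigidity. Hence every affine infinitesimal flex is trivial, so the tensegrity is affinely infinitesimally rigid. Conversely, since every strict infinitesimal flex is by definition an affine infinitesimal flex, affine infinitesimal rigidity immediately implies strict infinitesimal rigidity.

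There is no real obstacle here: the substantive content lies in Proposition \ref{prop:aff-strict}, which in turn rests on the elementary observation that replacing $(\p',A)$ by $(-\p',-A)$ preserves the bar equality constraints \eqref{eq:affif} while reversing the sign of $\tr(A)$. Since the corollary is essentially a two-line restatement, I would keep the proof short, perhaps just writing: \emph{Immediate from Proposition \ref{prop:aff-strict}, since both rigidity notions are defined as the non-existence of the corresponding nontrivial flex, and every strict infinitesimal flex is an affine infinitesimal flex.}
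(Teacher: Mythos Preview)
Your proposal is correct and matches the paper's approach: the corollary is stated immediately after Proposition~\ref{prop:aff-strict} without a separate proof, being understood as the contrapositive reformulation you describe. Your observation that every strict infinitesimal flex is in particular an affine infinitesimal flex handles one direction trivially, and the other direction is exactly the content of the proposition.
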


We call a \emph{stress} $(\omega_1,\ldots,\omega_{|E|})$ on a periodic tensegrity $(G, \p, \Lambda)$ a \emph{strict equilibrium stress} if it is an equilibrium stress, i.e. satisfying \eqref{eq:eq-stress}, which also satisfies for $j = 1,2,\ldots, d$,
\begin{equation}\label{eq:str-eq-stress}
\sum_{\e_k \in E} \omega_k \lambda_{kj}\e_k + \h_j = 0,
\end{equation}
where $\e_k, \lambda_{kj}$ are as defined in \eqref{eq:affif} and $\h_j$ is as defined in \eqref{eq:sif}.

Equivalently, using $\lambda_{kj} = \h_j \cdot \e_k$, the condition on a strict equilibrium stress becomes
\begin{equation}\label{eq:str-eq-stressAlt}
\sum_{\e_k \in E} \omega_k \e_k \e_k^T = -I_d
\end{equation}
where $I_d$ is the $d$-dimensional identity matrix. In the exact same manner as the proof of Theorem~5.2 from \cite{Roth81}, we obtain the following bar and stress condition for strict infinitesimal rigidity.

\begin{thm}\label{thm:strict-strut-bar}
A tensegrity $(G, \p, \Lambda)$ is strictly infinitesimally rigid if and only if the corresponding bar framework has no nontrivial affine infinitesimal flexes satisfying \eqref{eq:sif} with the inequality replaced by equality, and there exists a strict equilibrium stress on $(G, \Lambda)$.
\end{thm}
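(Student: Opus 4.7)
The plan is to mirror the proof of Theorem~5.2 of \cite{Roth81}, carried out in the enlarged configuration space that carries the affine motion $A$ of the period lattice alongside the vertex velocities $\p'$. First I would introduce the ambient space $V = \R^{dn} \oplus \R^{d^2}$ whose elements are pairs $(\p', A)$, and for each edge $\e_k$ joining $\p_{(i,0)}$ to $\p_{(j,\lambda)}$ define the linear functional $L_k(\p', A) = \e_k \cdot (\p'_j - \p'_i + A\e_k)$, together with the trace functional $T(\p', A) = \tr(A)$. In this language a strict infinitesimal flex is precisely an element of $V$ on which each $L_k$ has the sign prescribed by the class of $\e_k$ and on which $T \leq 0$, and strict infinitesimal rigidity becomes the statement that every such element lies in the trivial-flex subspace $V_0 \subset V$ (of dimension $d + \binom{d}{2}$, spanned by infinitesimal translations with $A = 0$ and infinitesimal rotations with $\p' = 0$ and $A$ skew).

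Next I would assemble a single family $Y$ in the dual of $V$ that is tailored to Lemma~\ref{lem:roth-whiteley}: include $\pm L_k$ for each bar, $-L_k$ for each cable, $L_k$ for each strut, and $-T$. By construction $Y^+$ is exactly the cone of strict infinitesimal flexes, while $Y^\perp$ consists of those $(\p', A)$ on which every constraint holds with equality, i.e., the affine infinitesimal flexes of the underlying bar framework satisfying \eqref{eq:sif} as an equality. Since rigid motions of $\E^d$ preserve both edge lengths and volumes we have $V_0 \subseteq Y^\perp \subseteq Y^+$, so strict infinitesimal rigidity $Y^+ = V_0$ splits cleanly into the two conditions $Y^\perp = V_0$ and $Y^+ = Y^\perp$. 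The first condition is exactly the bar-and-equality clause in the statement, and Lemma~\ref{lem:roth-whiteley} turns the second into the existence of positive scalars on the elements of $Y$ whose weighted sum vanishes.

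I would then convert this Farkas-type output into a strict equilibrium stress. Bundling the two positive coefficients on $\pm L_k$ for each bar into a single real $\omega_k$, and reading the cable, strut, and trace coefficients as $\omega_k$ and $\nu$ respectively with the appropriate signs, the linear dependence becomes $\sum_k \omega_k L_k = \nu T$. Projecting this identity onto the $\p'_i$-coordinates recovers the vertex equilibrium condition \eqref{eq:eq-stress}, while projecting onto the $A$-coordinates (using $\e_k = \sum_m \lambda_{km} \g_m$ and $\h_m \cdot \g_j = \delta_{mj}$) yields, after rescaling by $\nu$, the strict equilibrium stress equations \eqref{eq:str-eq-stress} in their vector form, equivalently the matrix form \eqref{eq:str-eq-stressAlt}.

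The main obstacle I anticipate is the sign-and-normalization bookkeeping in this last step. I must check that the signs that fall out of Lemma~\ref{lem:roth-whiteley} on the cable, strut, and trace functionals match the stress convention adopted in Section~3 (negative on struts, positive on cables), and that the scalar $\nu$ attached to $-T$ is strictly positive, so that rescaling to $\nu = 1$ is legitimate and responsible for the $-I_d$ on the right-hand side of \eqref{eq:str-eq-stressAlt}. Once these reconciliations are in place, combining the two conditions $Y^\perp = V_0$ and $Y^+ = Y^\perp$ delivers the theorem in both directions.
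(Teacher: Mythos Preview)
Your proposal is correct and follows essentially the same route as the paper's own proof: encode each edge constraint and the trace constraint as vectors in an enlarged configuration space, assemble the set $Y$, apply Lemma~\ref{lem:roth-whiteley}, then read off the bar-framework-with-equality condition from $Y^\perp$ and the strict equilibrium stress from the positive linear dependence. The only cosmetic differences are that the paper works in the $(\p',\g')$ coordinates of $\R^{d(n+d)}$ rather than your $(\p',A)$ coordinates, and that you are more explicit about the normalization by the positive scalar $\nu$ attached to the trace functional---a step the paper silently absorbs into the phrase ``and thus a strict equilibrium stress.''
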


\begin{proof}
For an edge $\e_k = \p_{(j,\lambda)}-\p_{(i,0)}$ going from $\p_{(i,0)}$ to $\p_{(j,\lambda)}$, consider 
\begin{equation}
\vect{y}_k = (\vect{a}_{k1},\ldots,\vect{a}_{kn},\vect{b}_{k1},\ldots,\vect{b}_{kd})
\end{equation}
as a $d(n+d)$ dimensional vector such that $\vect{a}_{ki} =  \e_k$, $\vect{a}_{kj} = -\e_k$, $\vect{b}_{km} = \lambda_{km}\e_k$, and $\vect{a}_{km} = 0$ otherwise. We also may consider the vector $\h = (\vect{0},\ldots,\vect{0},-\h_1,\ldots,-\h_d)$. For a strict infinitesimal flex $(\p',\g')$, we also consider it as a $d(n+d)$ dimensional vector $\vect{f}' = (\p_1',\ldots,\p_n',\g_1',\ldots,\g_d')$. Note that $\vect{y}_k \cdot \vect{f}' \geq 0$ if $\e_k$ is a strut, $(-\vect{y}_k) \cdot \vect{f}' \geq 0$ if $\e_k$ is a cable, and both hold if $\e_k$ is a bar. Furthermore, $\h\cdot\vect{f}' \geq 0$ from \eqref{eq:sif}. Thus, we may choose
\begin{equation}
Y = \{\h\} \cup \{-\vect{y}_k \st \e_k \in B \cup C\} \cup \{\vect{y}_k \st \e_k \in B \cup S\}
\end{equation}
so as to apply Lemma~\ref{lem:roth-whiteley}. Note that $Y^\bot$ is the set of all affine infinitesimal motions of the corresponding bar framework.

If $(G, \p, \Lambda)$ is strictly infinitesimally rigid, then $Y^+ = Y^\bot$ is the set representing all trivial motions. In addition, by the lemma, we have a linear combination of elements of $Y$ summing to zero with positive scalars, and thus a strict equilibrium stress.

Likewise, by Lemma~\ref{lem:roth-whiteley}, if there is a strict equilibrium stress, we can find a positive linear dependency among elements in $Y$. Thus, since the bar framework allows only trivial flexes, $Y^+ = Y^\bot$ only has trivial flexes, and thus $(G, \p, \Lambda)$ is strictly infinitesimally rigid.
\end{proof}

The bar framework condition may be strengthened by applying Corollary~4.2 to obtain the following corollary.

\begin{cor}\label{cor:strict-strut-bar}
A tensegrity $(G, \p, \Lambda)$ is strictly infinitesimally rigid if and only if the corresponding bar framework is strictly infinitesimally rigid and there exists a strict equilibrium stress on $(G, \p, \Lambda)$.
\end{cor}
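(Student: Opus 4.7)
The plan is to derive Corollary~\ref{cor:strict-strut-bar} from Theorem~\ref{thm:strict-strut-bar} together with the corollary to Proposition~\ref{prop:aff-strict}, exactly as the paper suggests. The two conditions to compare are the Theorem~\ref{thm:strict-strut-bar} bar framework condition (``no nontrivial affine flex with $\tr(A)=0$'') and the Corollary's apparently stronger condition that the bar framework itself be strictly infinitesimally rigid.

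For the ``if'' direction, I would assume the bar framework is strictly infinitesimally rigid and that a strict equilibrium stress exists. By the corollary to Proposition~\ref{prop:aff-strict} applied to the bar framework, strict and affine infinitesimal rigidity coincide there, so the bar framework has no nontrivial affine flex at all. In particular there is no nontrivial affine flex with $\tr(A)=0$, so the hypotheses of Theorem~\ref{thm:strict-strut-bar} are satisfied and $(G,\p,\Lambda)$ is strictly infinitesimally rigid.

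For the ``only if'' direction, I would assume $(G,\p,\Lambda)$ is strictly infinitesimally rigid; Theorem~\ref{thm:strict-strut-bar} supplies the strict equilibrium stress immediately. To upgrade the bar framework condition, I would observe that any affine flex $(\p',A)$ of the bar framework is automatically an affine flex of the full tensegrity: every edge of the bar framework yields equality in \eqref{eq:affif}, and equality trivially satisfies both of the cable and strut inequalities. Hence any nontrivial strict flex of the bar framework (an affine flex with $\tr(A)\le 0$) would be a nontrivial strict flex of $(G,\p,\Lambda)$, contradicting the hypothesis. So the bar framework has no nontrivial strict flex, and one more application of the corollary to Proposition~\ref{prop:aff-strict} shows that the bar framework is strictly infinitesimally rigid.

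The only subtle point is the observation that affine flexes of the bar framework extend to affine flexes of the whole tensegrity, which is what makes the reduction go through in both directions; beyond that, the argument is purely a bookkeeping exercise combining the two previously established results.
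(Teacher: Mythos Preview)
Your proof is correct and follows essentially the same route as the paper: both directions rest on Theorem~\ref{thm:strict-strut-bar} together with the observation that a flex of the bar framework (equality on every edge) automatically satisfies the cable and strut inequalities of the original tensegrity. Two minor remarks: in the ``if'' direction you can bypass Corollary~4.2 entirely, since a nontrivial affine flex with $\tr(A)=0$ is already a strict flex and hence is ruled out directly by strict infinitesimal rigidity of the bar framework; and in the ``only if'' direction your final appeal to Corollary~4.2 is redundant, as ``no nontrivial strict flex'' is precisely the definition of strict infinitesimal rigidity.
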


\begin{proof}
Note that if the corresponding bar framework has a nontrivial strict infinitesimal flex, then the flex is also a nontrivial strict infinitesimal flex of the strut tensegrity. If there is no strict equilibrium stress, then by Theorem~\ref{thm:strict-strut-bar}, $(G,\p,\Lambda)$ is not strictly infinitesimally rigid.

If the tensegrity is not strictly infinitesimally rigid, then either there is a nontrivial affine infinitesimal flex satisfying \eqref{eq:sif} with equality, or there is no strict equilibrium stress.
\end{proof}

This theorem and its corollary are useful when studying strictly jammed packings by a result from \cite{Donev1, Con08}.

\begin{thm}\label{sj-sir}
A periodic packing is strictly jammed with respect to the lattice $\Lambda$ if and only if its corresponding strut tensegrity $(G,\p,\Lambda)$ is strictly infinitesimally rigid.
\end{thm}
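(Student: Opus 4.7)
The plan is to prove both implications by reducing to the analogous equivalence for periodic jamming (Theorem~\ref{thm:col-jammed-rigid}) and exploiting the strict-equilibrium-stress machinery of Corollary~\ref{cor:strict-strut-bar}. The essential point is that strict jamming differs from periodic jamming only by allowing the period lattice $\Lambda(t)$ to vary continuously with $\Vol(\T^d(\Lambda(t)))$ nonincreasing; after differentiation at $t=0$ this extra freedom is exactly the trace inequality \eqref{eq:sif} that distinguishes a strict infinitesimal flex from an ordinary affine one.

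First I would handle the direction strictly infinitesimally rigid $\Rightarrow$ strictly jammed by contrapositive. Suppose there is a nontrivial continuous motion $P(t)$ of the packing whose period lattice $\Lambda(t)$ has nonincreasing volume and whose balls retain disjoint interiors. Taking right derivatives at $t=0$ yields a pair $(\p'(0),A(0))$ with $A(0)T=T'(0)$; the no-overlap condition forces $\e_k\cdot(\p_j'-\p_i'+A\e_k)\ge 0$ on every strut $\e_k$, and the volume condition gives $\tr(A)\le 0$. Thus $(\p',A)$ is a strict infinitesimal flex. If it is nontrivial we are done; if it happens to be a trivial rigid motion of $\E^d$, I would subtract that motion from $P(t)$ and iterate, invoking the standard analytic-parametrization/compactness argument used in \cite{Roth81} and extended to the periodic setting in \cite{Con08} to pull a nontrivial flex out of the leading higher-order term.

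For the converse, strictly jammed $\Rightarrow$ strictly infinitesimally rigid, I would again go by contrapositive and use a nontrivial strict infinitesimal flex $(\p',A)$ to build a finite motion violating strict jamming. When $\tr(A)<0$ strictly, the lattice $\Lambda(t)$ with generator matrix $T(t)=(I+tA)T$ has strictly decreasing volume, and the first-order strut inequalities leave nonnegative slack that absorbs the unavoidable quadratic corrections when the points are adjusted along $\p(t)=\p+t\p'+O(t^2)$ to yield an honest packing. In the degenerate case $\tr(A)=0$, the flex satisfies \eqref{eq:sif} with equality; by Theorem~\ref{thm:strict-strut-bar} together with Corollary~\ref{cor:strict-strut-bar}, such a flex either arises from a nontrivial affine flex of the associated bar framework or witnesses the nonexistence of a strict equilibrium stress, and in either case I would apply the classical Roth--Whiteley--Connelly extension argument from \cite{Roth81,Con08} to construct a continuous flex of the strut tensegrity, now viewed simultaneously with the lattice variables as unknowns on equal footing with $\p$.

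The main obstacle will be the usual infinitesimal-to-finite gap: going from a formal $(\p',A)$ to a bona fide continuous motion requires careful control of the quadratic corrections to strut lengths as points and lattice move together. This is precisely where the strict equilibrium stress delivered by Corollary~\ref{cor:strict-strut-bar} plays the role of the ordinary equilibrium stress in the non-strict case, providing the convexity needed to solve for the higher-order corrections. I expect the argument to mirror the non-strict proof step-for-step, with the lattice degrees of freedom $\g_1',\ldots,\g_d'$ and the trace constraint \eqref{eq:sif} slotting into the Farkas-type argument in direct analogy with the point variables.
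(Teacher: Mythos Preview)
The paper does not give its own proof of Theorem~\ref{sj-sir}; it is quoted as a known result from \cite{Donev1,Con08}.  So there is nothing in the paper to compare your argument against, and your proposal is really an attempt to reconstruct the proof in those references.

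Your outline is broadly correct, but you are making the ``infinitesimal flex $\Rightarrow$ finite motion'' direction harder than it is.  For a strut tensegrity the purely linear motion
\[
\p_i(t)=\p_i+t\p_i',\qquad T(t)=(I+tA)T
\]
already satisfies every strut constraint \emph{exactly}, because for each edge $\e_k$ with $\vect{v}_k=\p_j'-\p_i'+A\e_k$ one has
\[
|\e_k(t)|^2=|\e_k|^2+2t\,\e_k\!\cdot\!\vect{v}_k+t^2|\vect{v}_k|^2\ \ge\ |\e_k|^2,
\]
both added terms being nonnegative.  No $O(t^2)$ correction is needed, and no appeal to the strict equilibrium stress or to Corollary~\ref{cor:strict-strut-bar} is required here.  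For the volume constraint, first subtract a trivial flex so that $A$ is symmetric; then with real eigenvalues $\lambda_i$ and $\sum_i\lambda_i=\tr(A)\le 0$ one computes
\[
\log\det(I+tA)=t\sum_i\lambda_i-\tfrac{t^2}{2}\sum_i\lambda_i^{2}+O(t^3)\le 0
\]
for small $t\ge 0$, so the volume is nonincreasing.  This handles your ``$\tr(A)=0$'' case directly without splitting into subcases.  Non-touching balls are irrelevant for small $t$ since there are finitely many per period.

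For the other direction (finite motion $\Rightarrow$ nontrivial strict infinitesimal flex) your instinct is right: differentiating a continuous motion need not produce a nontrivial first-order flex, and one must pass to an analytic reparametrization via Milnor's curve selection lemma applied to the semialgebraic set of configurations $(\p,T)$ satisfying $|\e_k|^2\ge r_k^2$ and $\det(T)\le\det(T(0))$.  This is exactly the argument in \cite{Con08}, carried out with the lattice generators adjoined to the configuration variables.  Your sketch of this part is adequate; just be aware that the ``iterate on higher-order terms'' phrasing is not quite how the argument runs---one reparametrizes once to an analytic path and reads off the first nonvanishing derivative.
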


Thus, by Theorem~\ref{sj-sir}, Corollary~\ref{cor:strict-strut-bar}, and Proposition~\ref{prop:aff-strict} we obtain the following theorem.

\begin{thm}
A periodic packing is strictly jammed with respect to the lattice $\Lambda$ if and only if the corresponding bar framework is strictly infinitesimally rigid (or affinely infinitesimally rigid) and there exists a strict equilibrium stress on the corresponding strut tensegrity.
\end{thm}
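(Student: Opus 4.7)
The plan is to assemble the theorem by chaining together the three preceding results; essentially all of the technical work has already been done, so the proof should be a short direct composition.

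First, I would invoke Theorem~\ref{sj-sir} to convert the packing-theoretic statement into a tensegrity-theoretic one: the packing is strictly jammed with respect to $\Lambda$ if and only if the corresponding strut tensegrity $(G,\p,\Lambda)$ is strictly infinitesimally rigid. This is the bridge between the geometric (tangent balls, nonincreasing lattice volume) and combinatorial (stresses, flexes) viewpoints.

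Next, I would apply Corollary~\ref{cor:strict-strut-bar} to the strut tensegrity $(G,\p,\Lambda)$. This replaces ``strictly infinitesimally rigid strut tensegrity'' with the conjunction of two conditions on the underlying structures: the corresponding bar framework is strictly infinitesimally rigid, and there exists a strict equilibrium stress satisfying \eqref{eq:str-eq-stress} on $(G,\p,\Lambda)$. Composing the two biconditionals already yields the non-parenthetical form of the theorem.

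Finally, to justify the parenthetical ``(or affinely infinitesimally rigid)'', I would invoke the Corollary to Proposition~\ref{prop:aff-strict}, which states that a periodic bar tensegrity is strictly infinitesimally rigid if and only if it is affinely infinitesimally rigid. This means either formulation of the bar-framework hypothesis is interchangeable in the statement. There is no real obstacle here: the only subtlety is making sure the three results are being applied to the same periodic tensegrity $(G,\p,\Lambda)$ (which they are, since the graph, positions, and lattice coming from the packing are the same throughout), and that Proposition~\ref{prop:aff-strict} is allowed to be applied to the bar framework obtained by forgetting the inequality constraints.
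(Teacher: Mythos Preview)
Your proposal is correct and is exactly the argument the paper uses: it derives the theorem directly by composing Theorem~\ref{sj-sir}, Corollary~\ref{cor:strict-strut-bar}, and Proposition~\ref{prop:aff-strict} (and its corollary) applied to the strut tensegrity and its underlying bar framework.
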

\section{Decomposing Consistent Strict Jamming}
Throughout the last section and into this section, we have repeatedly switched between viewing tensegrities as being on a torus $\T^d(\Lambda)$ and as being periodic structures on $\E^d$. This is fine, and we reiterate that a periodic tensegrity $(G, \p)$ is periodically infinitesimally rigid with respect to period $\Lambda$ if the corresponding tensegrity on $\T^d(\Lambda)$ is infinitesimally rigid. To save an adverb, we regularly skip the word `periodically'.

We call a periodic tensegrity $(G, \p)$ \emph{consistently} infinitesimally rigid if it is infinitesimally rigid with respect to all period lattices. Similarly we call it \emph{consistently} affinely infinitesimally rigid if it is affinely infinitesimally rigid with respect to all period lattices $\Lambda$ and \emph{consistently} strictly infinitesimally rigid if it is strictly infinitesimally rigid with respect to all period lattices $\Lambda$. We note that, as in Proposition \ref{prop:consIsCons}, a tensegrity with period $\Lambda'$ is consistently infinitesimally rigid if it is infinitesimally rigid with respect to every sublattice of $\Lambda'$; the same holds true for affine infinitesimal rigidity and strict infinitesimal rigidity. For bar frameworks, the term \emph{affinely infinitesimally ultraperiodically rigid}, or just \emph{ultrarigid}, has been used by Power and others to refer to the same concept. We start with a basic result about consistent affine infinitesimal rigidity that has previously been overlooked.

\begin{prop}
\label{prop:AUIRdecomp}
A bar framework with period $\Lambda$ is consistently affinely infinitesimally rigid if and only if it is affinely infinitesimally rigid with respect to $\Lambda$ and is consistently infinitesimally rigid.
\end{prop}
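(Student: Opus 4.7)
The forward direction is essentially by definition. Consistent AIR restricts to AIR with respect to $\Lambda$; and for any period $\Lambda'$, a periodic infinitesimal flex $\p'$ with respect to $\Lambda'$ can be viewed as the affine flex $(\p', 0)$, so consistent AIR forces its triviality. In the paper's convention a trivial affine flex has $\p'_i$ constant across $i$ and $A$ skew-symmetric; with $A = 0$, this is precisely the trivial periodic flex. So the content lies in the backward direction.

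My plan for the backward direction is to invoke the sublattice reformulation of consistent AIR (the analog of Proposition~\ref{prop:consIsCons} that the authors note holds for affine infinitesimal rigidity) and reduce to showing AIR with respect to every sublattice $\Lambda' \subseteq \Lambda$. Fix such a $\Lambda'$, let $R$ be a set of coset representatives of $\Lambda / \Lambda'$, and index the vertex orbits of the $\Lambda'$-periodic framework as $\p_{(i, \mu)} = \p_i + \mu$ for $i \in \{1, \dots, n\}$ and $\mu \in R$. Given an affine flex $(\p', A)$ of $(G, \p, \Lambda')$, I would decompose it into an ``average'' piece that is $\Lambda$-periodic and a ``residual'' piece whose lattice motion vanishes.

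Concretely, define
\begin{equation*}
\bar{\p}'_i = \frac{1}{|R|} \sum_{\mu \in R} \p'_{(i, \mu)}, \qquad \hat{\p}'_{(i, \mu)} = \p'_{(i, \mu)} - \bar{\p}'_i.
\end{equation*}
Each edge of $G$ joining $(i, 0)$ to $(j, \lambda)$ in the $\Lambda$-periodic picture lifts to $|R|$ edges in the $\Lambda'$-periodic picture, going from $(i, \mu)$ to $(j, [\mu + \lambda])$ for each $\mu \in R$, where $[\,\cdot\,]$ denotes the coset representative. The edge vector $\e = \p_j - \p_i + \lambda$ does not depend on $\mu$, which is the crucial point. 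Summing the flex condition $\e \cdot (\p'_{(j, [\mu + \lambda])} - \p'_{(i, \mu)} + A\e) = 0$ over $\mu$ and using that $\mu \mapsto [\mu + \lambda]$ permutes $R$, the sums collapse into the single flex condition for $(\bar{\p}', A)$ on the original $\Lambda$-periodic edge; subtracting this identity from each individual condition then yields the ordinary (lattice-trivial) flex condition for $\hat{\p}'$ on each $\Lambda'$-periodic edge.

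To conclude, AIR with respect to $\Lambda$ forces $(\bar{\p}', A)$ trivial, so $\bar{\p}'_i$ is a constant $\vect{t}$ and $A$ is skew-symmetric, while consistent IR specialized to $\Lambda'$ forces $\hat{\p}'_{(i, \mu)}$ to be a constant $\vect{u}$. The sum $\p'_{(i, \mu)} = \vect{t} + \vect{u}$ is then constant with $A$ skew-symmetric, making $(\p', A)$ a trivial affine flex of $(G, \p, \Lambda')$. The main obstacle I anticipate is simply the indexing bookkeeping in the averaging step --- correctly tracking the paper's $A\e_k$ convention under the change between the $\Lambda$- and $\Lambda'$-periodic pictures and keeping the coset arithmetic straight.
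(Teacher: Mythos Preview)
Your argument is correct, and it proceeds by a genuinely different mechanism than the paper's. The paper argues the backward direction by contrapositive: starting from a nontrivial affine flex $(\p', A)$ with respect to some sublattice $\Lambda'$, it picks a single $\lambda\in\Lambda\setminus\Lambda'$ and forms the \emph{difference} $\q'_{(i,\lambda_0)}=\p'_{(i,\lambda_0)}-\p'_{(i,\lambda+\lambda_0)}$, which annihilates the $A$-term and yields a periodic flex with respect to $\Lambda'$; a short order argument in $\Lambda/\Lambda'$ then rules out $\q'$ being a nonzero constant, so this flex is nontrivial and contradicts consistent infinitesimal rigidity.

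Your averaging decomposition $\p'=\bar{\p}'+\hat{\p}'$ is a cleaner, direct version of the same linear-algebraic idea: rather than a single difference, you project onto the $\Lambda$-invariant part. This buys you a canonical splitting in which the two hypotheses act on the two summands independently, with no need for the auxiliary ``choose $\lambda$ so that $\q'\ne 0$'' step or the Lagrange-type argument that $\q'$ cannot be a nonzero constant. (Indeed, in your setup $\hat{\p}'$ has zero coset average by construction, so once IR forces it constant it is automatically zero.) The paper's differencing approach, on the other hand, is marginally more elementary---one subtraction rather than a full coset sum---and generalizes without change to settings where one might hesitate to divide by $|R|$. Both routes rely only on the linearity of the bar flex equations and on the paper's convention that the flex condition involves $A\e_k$ (so that $A$ survives the averaging unchanged), which you have tracked correctly.
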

\begin{proof}
Consistent affine infinitesimal rigidity immediately implies affine infinitesimal rigidity with respect to any period $\Lambda$ and consistent infinitesimal rigidity, so the condition is clearly necessary. Now, suppose a framework is not consistently affinely infinitesimally rigid, having a nontrivial affine infinitesimal flex $(\p', A)$, $\p'$ periodic with respect to $\Lambda'$. We assume $\Lambda'$ is a sublattice of $\Lambda$. We take $\p'_{(i, \lambda_0)}$, $\lambda_0 \in \Lambda$ to refer to the infinitesimal flex of the point at $\p_{(i, \lambda_0)}$, noting that $\p'_{(i, 0)} = \p'_{(i, \lambda_0)}$ if $\lambda_0 \in \Lambda'$.

Assuming the framework is affinely infinitesimally rigid with respect to $\Lambda$, $\p'$  cannot be periodic with respect to the lattice $\Lambda$. Choosing some $\lambda \in \Lambda$ not in $\Lambda'$, consider the flex $\vect{q}'$ defined by
\begin{equation*}
\vect{q}'_{(i, \lambda_0)} = \p'_{(i, \lambda_0)} - \p'_{(i, \lambda +\lambda_0)}
\end{equation*}
Since $(\p', A)$ cannot be affinely periodic with respect to $\Lambda$ by the assumption, for some choice of $\lambda$, $\vect{q}'$ is nonzero. We now prove that the $\vect{q}'_{(i, \lambda_0)}$ cannot equal a constant vector $\vect{v}$ different from zero. Supposing they could, and saying $|\Lambda/\Lambda'| = N$, we would have
\begin{equation*}
\p'_{(i, \lambda_0)} = \p'_{(i, \lambda_0 + k\lambda)} + k\vect{v} = \p'_{(i, \lambda_0)} + N\vect{v}
\end{equation*}
so $N\vect{v} = 0$, so $\vect{v}$ is zero, a contradiction.

But then $(\q', 0)$ is a nontrivial affine flex periodic with respect to $\Lambda'$. To see this, suppose $\e_k$ connects $\p_{(i, \lambda_0)}$ to $\p_{(j, \lambda_1)}$. We find that
\begin{equation*}
\q'_{(i, \lambda_0)} - \q'_{(j, \lambda_1)} = (\p'_{(i, \lambda_0)} - \p'_{(i, \lambda + \lambda_0)} - \p'_{(j, \lambda_1)} + \p'_{(j, \lambda + \lambda_1)})
\end{equation*}
which equals
\begin{equation*}
-(\p'_{(j, \lambda_1)} - \p'_{(i, \lambda_0)} + A \e_k) + (\p'_{(j, \lambda + \lambda_1)}  - \p'_{(i, \lambda + \lambda_0)} + A\e_k),
\end{equation*}
so that $\e_k \cdot (\q'_{(j, \lambda_1)} - \q'_{(i, \lambda_0)}) = 0$ since $(\p', A)$ is an affine flex of the bar framework. Then $\q'$ is a nontrivial infinitesimal flex with period $\Lambda'$. Then, if a framework that is affinely infinitesimally rigid with respect to $\Lambda$ has an affine infinitesimal flex periodic with respect to $\Lambda'$, it is not infinitesimally rigid with respect to $\Lambda'$. This proves sufficiency.
\end{proof}

With a little more work, this simple result can be extended to all tensegrities.
\begin{thm}
\label{thm:SURdecomp}
A tensegrity with period $\Lambda$ will be consistently strictly infinitesimally rigid if and only if it is strictly infinitesimally rigid with respect to $\Lambda$ and is consistently infinitesimally rigid.
\end{thm}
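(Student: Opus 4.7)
The plan is to reduce the theorem to the bar-and-stress decomposition given by Corollary~\ref{cor:strict-strut-bar}, applied separately on each sublattice $\Lambda'$ of $\Lambda$. Necessity is immediate: if $(G,\p)$ is consistently strictly infinitesimally rigid, then in particular it is strictly infinitesimally rigid with respect to $\Lambda$; moreover, any infinitesimal flex $\p'$ on any sublattice $\Lambda'$ yields an affine infinitesimal flex $(\p',0)$ with $\tr(0)=0$, hence a strict infinitesimal flex that must be trivial, so consistent infinitesimal rigidity follows.

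For sufficiency, fix a sublattice $\Lambda' \subseteq \Lambda$ of index $N$. By Corollary~\ref{cor:strict-strut-bar} it suffices to verify on $\Lambda'$ that (a)~the corresponding bar framework is strictly infinitesimally rigid with respect to $\Lambda'$, and (b)~there exists a strict equilibrium stress on $(G,\p,\Lambda')$. For (a), apply Corollary~\ref{cor:strict-strut-bar} on $\Lambda$ to obtain strict infinitesimal rigidity of the bar framework with respect to $\Lambda$. Next, observe that consistent infinitesimal rigidity of the tensegrity passes to the underlying bar framework, since every infinitesimal flex of the bar framework is a fortiori an infinitesimal flex of the tensegrity. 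With both hypotheses of Proposition~\ref{prop:AUIRdecomp} in hand, the bar framework is consistently affinely (equivalently, strictly) infinitesimally rigid, settling (a).

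For (b), pick a strict equilibrium stress $(\omega_k)_{\e_k \in E}$ on $(G,\p,\Lambda)$, which exists by Corollary~\ref{cor:strict-strut-bar}. The fundamental domain of $\Lambda'$ decomposes into $N$ translates of the fundamental domain of $\Lambda$, so each edge of $E$ lifts to $N$ edges in the edge set $E'$ on $\Lambda'$; assign the value $\omega_k/N$ to each such lifted copy. Vertex-equilibrium~\eqref{eq:eq-stress} is inherited at every vertex on $\Lambda'$ since its local edge configuration is congruent to that of the corresponding vertex on $\Lambda$. For the strict condition, the basis-independent reformulation~\eqref{eq:str-eq-stressAlt} together with replication gives
\begin{equation*}
\sum_{\e_k \in E'} \frac{\omega_k}{N}\, \e_k \e_k^T \;=\; \frac{1}{N}\cdot N\sum_{\e_k \in E} \omega_k \e_k \e_k^T \;=\; -I_d,
\end{equation*}
and the sign conditions on cables and struts are preserved because $1/N>0$. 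Applying Corollary~\ref{cor:strict-strut-bar} on $\Lambda'$ completes the argument; since $\Lambda'$ was an arbitrary sublattice, consistent strict infinitesimal rigidity follows.

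The main obstacle I expect is that the finite-difference trick from the proof of Proposition~\ref{prop:AUIRdecomp} does not generalize directly to tensegrities: subtracting two strict infinitesimal flexes preserves the bar (equality) constraints but turns the cable/strut (inequality) constraints indeterminate, so one cannot simply imitate that argument at the tensegrity level. Routing through the bar-and-stress decomposition sidesteps the issue by isolating the inequality data into the single algebraic identity~\eqref{eq:str-eq-stressAlt}, which transfers to sublattices via straightforward replication and rescaling.
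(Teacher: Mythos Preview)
Your proof is correct and follows essentially the same route as the paper: reduce to the bar framework via Corollary~\ref{cor:strict-strut-bar}, invoke Proposition~\ref{prop:AUIRdecomp} for part (a), and replicate-then-rescale the strict equilibrium stress for part (b). If anything, you are more explicit than the paper in verifying the hypotheses of Proposition~\ref{prop:AUIRdecomp} for the bar framework (the paper asserts this directly), and you fold the $1/N$ scaling into the assignment rather than applying it afterward, but these are cosmetic differences.
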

\begin{proof}
Necessity is once again clear, so we only need to show these two properties are sufficient. Suppose a tensegrity is both strictly infinitesimally rigid with respect to $\Lambda$ and is consistently infinitesimally rigid. By Proposition \ref{prop:AUIRdecomp}, we then know that the corresponding bar framework is consistently affinely infinitesimally rigid. By Corollary \ref{cor:strict-strut-bar}, we then only need to show that the tensegrity has a strict equilibrium stress with respect to any sublattice $\Lambda'$ of $\Lambda$. Since the tensegrity is strictly infinitesimally rigid with respect to $\Lambda$, we know that there is a strict equilibrium stress periodic with respect to $\Lambda$ that satisfies \eqref{eq:str-eq-stressAlt}. This stress can also serve as a periodic equilibrium stress with respect to $\Lambda'$, and if we write $|\Lambda/\Lambda'| = N$, we find that the stress on the sublattice will satisfy 
\[ \sum_{\e_k \in E} \omega_k \e_k \e_k^T = -N I_d \]
By scaling this stress, we find an equilibrium stress periodic with respect to $\Lambda'$ that satisfies \eqref{eq:str-eq-stressAlt} and hence is strict. Then any sublattice has a strict equilibrium stress, so that the tensegrity is consistently strictly infinitesimally rigid. This proves the theorem.
\end{proof}

This last theorem is the cumulation of a long draught of rigidity theory. With the following corollary, which is a simple consequence of this last theorem and Theorem \ref{sj-sir}, we bring this theory back to an unexpected and fundamental statement about ball packings. 

\begin{cor}
\label{cor:SUJ=SJ+UJ}
A periodic ball packing in $\E^d$ is consistently strictly jammed if and only if it is both strictly jammed with respect to some period $\Lambda$ and consistently periodically jammed.
\end{cor}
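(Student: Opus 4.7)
The plan is to translate the packing statement into the language of the corresponding strut tensegrity and then directly invoke Theorem~\ref{thm:SURdecomp}. First I would establish two dictionary entries that hold for every period lattice simultaneously. By Theorem~\ref{sj-sir}, strict jamming of the packing with respect to a period $\Lambda$ is equivalent to strict infinitesimal rigidity of $(G,\p,\Lambda)$; quantifying over all periods, consistent strict jamming corresponds exactly to consistent strict infinitesimal rigidity of the strut tensegrity. Similarly, combining Theorem~\ref{thm:col-jammed-rigid} with Theorem~\ref{thm:rigid-inf-rigid} (rigidity and infinitesimal rigidity coincide for strut tensegrities), periodic jamming with respect to $\Lambda$ is equivalent to infinitesimal rigidity of $(G,\p,\Lambda)$, so consistent periodic jamming is equivalent to consistent infinitesimal rigidity of the strut tensegrity.

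With the translations in place, the necessity direction is immediate: consistent strict jamming trivially implies strict jamming with respect to any particular period $\Lambda$, and since strict jamming is definitionally stronger than periodic jamming on the same period, it also implies consistent periodic jamming. For sufficiency, suppose the packing is strictly jammed with respect to some period $\Lambda$ and consistently periodically jammed. Via the dictionary, the strut tensegrity $(G,\p,\Lambda)$ is strictly infinitesimally rigid and consistently infinitesimally rigid. Applying Theorem~\ref{thm:SURdecomp} to this tensegrity yields that it is consistently strictly infinitesimally rigid, which by the dictionary gives consistent strict jamming of the packing.

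The only subtlety worth checking is that the notion of ``period'' is coherent across the packing/strut tensegrity correspondence: each period of the packing is also a period of its corresponding strut tensegrity and conversely, so the universal quantifier embedded in ``consistently'' translates faithfully under the correspondence. I do not anticipate any real obstacle beyond this bookkeeping, since the corollary is essentially a direct repackaging of Theorem~\ref{thm:SURdecomp} for ball packings, with Theorem~\ref{sj-sir} serving as the bridge between the two settings.
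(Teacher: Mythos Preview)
Your proposal is correct and follows essentially the same route as the paper: the corollary is stated there as ``a simple consequence of this last theorem [Theorem~\ref{thm:SURdecomp}] and Theorem~\ref{sj-sir}.'' You have merely made explicit the second dictionary entry (periodic jamming $\leftrightarrow$ infinitesimal rigidity via Theorems~\ref{thm:col-jammed-rigid} and~\ref{thm:rigid-inf-rigid}) that the paper leaves implicit, which is a harmless and arguably helpful elaboration.
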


\begin{figure}[thb]
\centering
\includegraphics[scale=0.28]{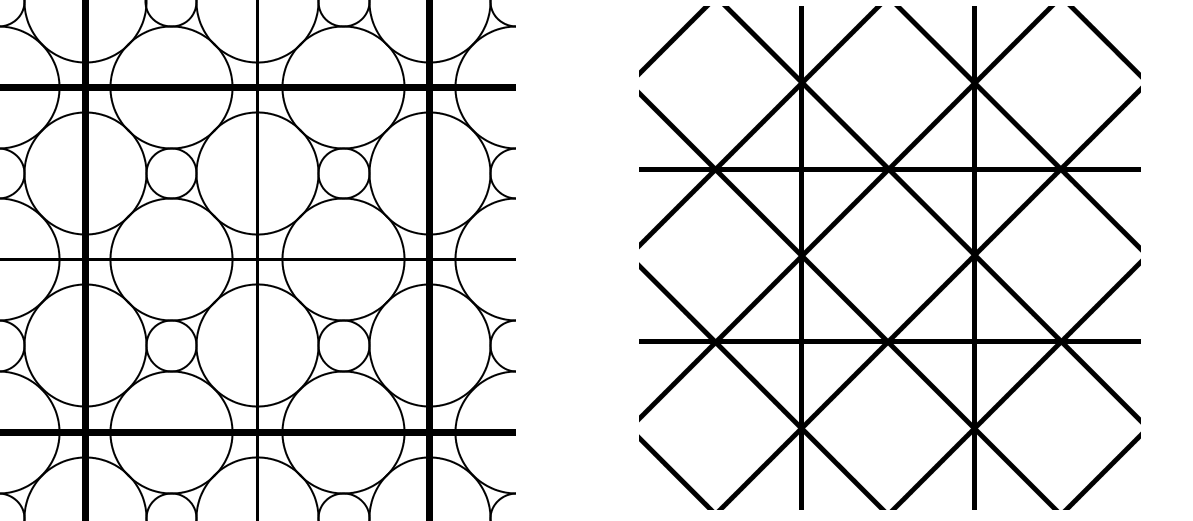}
\caption{A strictly jammed packing that is not consistently periodically jammed . The right diagram gives the graph of the packing.}
\label{fig:3disks_sJnGuJ}
\end{figure}
\begin{figure}[thb]
\centering
\includegraphics[scale=0.28]{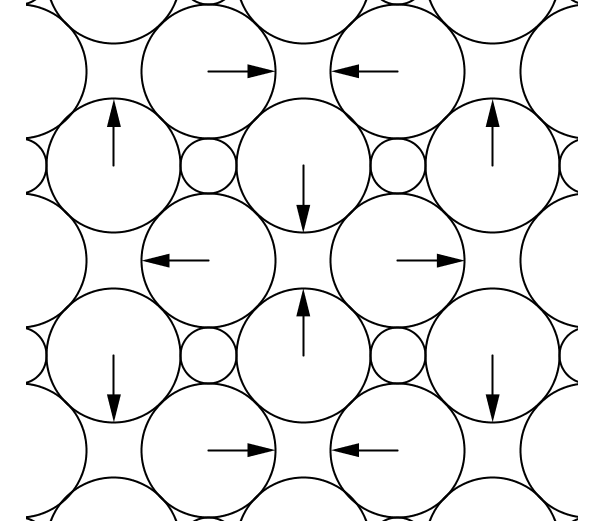}
\caption{A flex proving the example is not consistently periodically jammed ; this flex is periodic with respect to a $2\times2$ sublattice of the finest period lattice, $\Lambda$.}
\label{fig:3disks_sJnGuJ_flex}
\end{figure}

We give two observations about this result. First, we note that this result is only interesting if strict jamming and consistent periodic jamming are independent properties. It is easy to find examples of consistently periodically jammed  packings that are not strictly jammed; the packing in Figure \ref{fig:dodecagon} in Section \ref{sect:examples} is an example. Finding a strictly jammed example that is not consistently periodically jammed  is more difficult, and the existence of such a packing actually contradicts a conjecture made by Connelly in Ross's thesis, (\cite{Ross11}, p304). It is still an open question if there is a strictly jammed packing of equal radii disks that is not consistently periodically jammed. However, allowing disks of unequal sizes, there is a simple strictly jammed packing that is not consistently periodically jammed. It is depicted in Figure \ref{fig:3disks_sJnGuJ}. This packing has two larger circles and one smaller circle in its unit cell. As depicted in the figure, it is not consistently periodically jammed. It is, however, strictly jammed. It is possible to assign each edge of the contact graph the same positive stress and get a strict equilibrium stress; we leave the demonstration of this as an exercise. To prove the corresponding bar framework affinely infinitesimally rigid is also straightforward. Packings of different-radii disks have been studied as a model of alloys, and this example in particular is a variant of the $S_1$ example studied by Henley and Likos \cite{Henley-rotater} with half as many small disks. In three dimensions, there are easy examples of equal radii strictly jammed packings that are not uniformly jammed , including the octahedral network pictured in Figure \ref{fig:3spheres_sJnGuJ}. This is a similar example to the two-dimensional case, again having three balls per unit cell.

\begin{figure}[thb]
\centering
\includegraphics[scale=0.28]{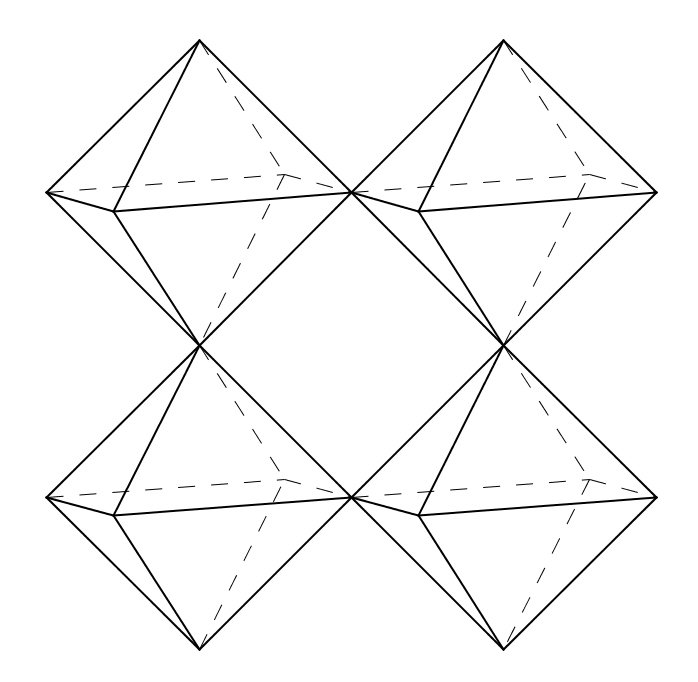}
\caption{A portion of the contact graph for the three-dimension three sphere counterexample}
\label{fig:3spheres_sJnGuJ}
\end{figure}

The second observation we make is that this result assumes spherical particles. Since there is no obvious way to model non-spherical particles with tensegrities, the introduction of non-spherical particles will make Theorem \ref{thm:SURdecomp} less useful. However, there is interest in other hard-particle systems, with Torquato surveying a large field of papers that investigate other particles in \cite{Torq10}. We leave this as an open question.

\begin{oquest*}
What conditions on particle shapes need to be assumed for consistent strict jamming to be a consequence of strict jamming and consistent periodic jamming?
\end{oquest*}
\section{Simplifying jammedness on sublattices}
\label{sect:roots}
Donev et al. developed an algorithm to determine if a packing was periodically jammed in \cite{Donev1}. To do this, they collected the $|E|$ constraints imposed by \eqref{eq:infFlex_cond}, where $|E|$ is the number of edges in the packing on the torus, into the single constraint $M \vect{p}' \ge 0$, where $M$ is known as the \emph{rigidity matrix} and has $|E|$ columns. Determining collective jammedness then reduces to a linear programming problem.

We now face the question of how we can extend this method to determine if a packing is jammed with respect to a sublattice $\Lambda'$. The na\"{i}ve approach is to treat the packing as periodic with respect to $\Lambda'$ as opposed to $\Lambda$, but this is inefficient. If $|\Lambda/\Lambda'| = N$, the rigidity matrix for the sublattice will have $N$ times as many rows and $N$ times as many columns. The linear programming problem will be corresponding more difficult.

One critical simplification comes from essentially using a discrete Fourier transform. We assume that $P$ is a periodically jammed packing that is periodic with respect to $\Lambda$ and that $\Lambda'$ is a sublattice of $\Lambda$. We identify $n$ balls in this packing as translationally distinct with respect to $\Lambda$, assigning them positions $\vect{p}_{(i,0)}$. There are $(N-1)n$ other balls that are translationally distinct with respect to the sublattice $\Lambda'$, and they are at positions $\vect{p}_{(i,\lambda)}= \vect{p}_{(i,0)} + \lambda$, where $\lambda$ is a member of the group  $\Lambda/\Lambda'$.

With this proposition, we allow complex infinitesimal flexes, which assign each vertex a infinitesimal flex in the complex vector space $\mathbb{C}^d$. Complex infinitesimal flexes are physically meaningless but simplify the theory. We note that, by taking the real part or imaginary part of a complex infinitesimal flex, we get a real infinitesimal flex.

\begin{prop}
\label{prop:FourierDecomp_general}
Let $P$ be a packing that is periodically jammed with respect to period $\Lambda$. Then $P$ is periodically unjammed with respect to period $\Lambda' \subset \Lambda$ if and only if there is a nontrivial, possibly-complex infinitesimal flex $\vect{q'}$ of the corresponding bar framework $(G, \p, \Lambda')$ and an irreducible representation $\rho : \Lambda/\Lambda' \rightarrow \C\backslash\{0\}$ so that
\begin{equation}
\label{eq:rep_cond}
\vect{q'}_{(i,\lambda_0)} = \rho(\lambda_0)\vect{q'}_{(i,0)}
\end{equation}
for all $\lambda_0 \in \Lambda/\Lambda'$.
\end{prop}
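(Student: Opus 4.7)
My plan is to first reduce the statement to a claim about the bar framework using Theorem~\ref{thm:col-jammed-bar-stress}, and then diagonalise the flex space via discrete Fourier analysis on the finite abelian group $H := \Lambda/\Lambda'$ of order $N$. Since $P$ is periodically jammed with respect to $\Lambda$, Theorem~\ref{thm:col-jammed-bar-stress} provides an equilibrium stress on the corresponding strut tensegrity on $\T^d(\Lambda)$. Lifting this stress to $\T^d(\Lambda')$ by repeating it on each of the $N$ cosets of $\Lambda'$ in $\Lambda$ still satisfies \eqref{eq:eq-stress} at every vertex, so the strut tensegrity on $\T^d(\Lambda')$ also admits an equilibrium stress. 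Applying Theorem~\ref{thm:col-jammed-bar-stress} a second time, periodic unjammedness of $P$ with respect to $\Lambda'$ becomes equivalent to the existence of a nontrivial real $\Lambda'$-periodic infinitesimal flex of the bar framework corresponding to $P$. The proposition thus reduces to showing that such a real flex exists if and only if some nontrivial complex bar flex satisfies \eqref{eq:rep_cond} for some character $\rho$.

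\textbf{Forward direction.} Since $H$ is finite abelian, its irreducible complex representations are precisely the $N$ characters, which form an orthogonal basis for $\C$-valued functions on $H$. Given a nontrivial real bar flex $\vect{p}'$ of period $\Lambda'$, I would define for each character $\rho$
\[ \vect{q}^{(\rho)}_{(i,0)} := \frac{1}{N} \sum_{\lambda_0 \in H} \overline{\rho(\lambda_0)} \, \vect{p}'_{(i, \lambda_0)}, \qquad \vect{q}^{(\rho)}_{(i, \lambda_0)} := \rho(\lambda_0) \vect{q}^{(\rho)}_{(i, 0)}, \]
and then check that each $\vect{q}^{(\rho)}$ is a complex bar flex. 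For any edge $\e_k$ connecting $\p_{(i,0)}$ to $\p_{(j,\lambda_k)}$ in the contact graph and any $\lambda_0 \in H$, the bar constraint on the corresponding translated edge reads $\e_k \cdot ( \vect{p}'_{(j, \lambda_k + \lambda_0)} - \vect{p}'_{(i, \lambda_0)}) = 0$; substituting the Fourier expansion of $\vect{p}'$ rewrites this as $\sum_\rho \rho(\lambda_0) v^{(\rho)} = 0$, with $v^{(\rho)} := \e_k \cdot ( \rho(\lambda_k) \vect{q}^{(\rho)}_{(j, 0)} - \vect{q}^{(\rho)}_{(i, 0)})$. Since the character matrix $(\rho(\lambda_0))_{\rho, \lambda_0}$ is invertible, each $v^{(\rho)}$ vanishes, so each $\vect{q}^{(\rho)}$ is a bar flex. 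The trivial-character component is $\Lambda$-periodic, hence a translation by rigidity of the bar framework on $\Lambda$; since $\vect{p}'$ is nontrivial, some nontrivial-character component $\vect{q}^{(\rho)}$ must be nonzero, yielding the desired flex.

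\textbf{Reverse direction and main obstacle.} Conversely, given a nontrivial complex bar flex $\vect{q}'$ satisfying \eqref{eq:rep_cond} with character $\rho$, I would first rule out $\rho$ trivial: in that case $\vect{q}'$ would be $\Lambda$-periodic, and rigidity on $\Lambda$ would force it to be a translation, contradicting nontriviality. Hence $\rho$ is nontrivial and its image contains at least one root of unity other than $1$. Both $\Re(\vect{q}')$ and $\Im(\vect{q}')$ are real $\Lambda'$-periodic bar flexes, and at least one is nonzero. Neither can equal a nonzero constant translation $\vect{v}$, since that would force $\Re(\rho(\lambda_0) \vect{q}'_{(i, 0)}) = \vect{v}$ to be independent of $\lambda_0 \in H$, which fails once $\rho(H)$ contains two distinct values and forces $\vect{v} = 0$. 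This produces a genuine nontrivial real $\Lambda'$-periodic flex, completing the equivalence. The main obstacle is the Fourier-decoupling step above: one must carefully separate the edge-winding index $\lambda_k$ from the coset translation $\lambda_0$ so that character orthogonality on $H$ decouples the $N$ bar constraints per edge orbit into $v^{(\rho)} = 0$ for each $\rho$ independently. Once this decoupling is set up, the remainder is routine bookkeeping on $H$.
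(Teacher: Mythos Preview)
Your proof is correct and follows essentially the same route as the paper: reduce to the bar framework via the equilibrium stress and Theorem~\ref{thm:col-jammed-bar-stress}, then decompose an arbitrary $\Lambda'$-periodic flex over the characters of the finite abelian group $\Lambda/\Lambda'$. The only differences are presentational---you write the decomposition as Fourier coefficients and verify the bar constraint for each $\vect{q}^{(\rho)}$ via invertibility of the character matrix, whereas the paper writes it as a convolution and observes that any linear combination of translates of a flex is again a flex; you are also more careful than the paper in the reverse direction, explicitly ruling out the trivial character and checking that passing to real and imaginary parts cannot produce only trivial flexes.
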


\begin{proof}

We have assumed that $P$ is periodically jammed. By Theorem \ref{thm:col-jammed-bar-stress}, this implies that there is an equilibrium stress on the corresponding strut tensegrity $(G, \p, \Lambda)$. But this stress will also clearly be a periodic equilibrium stress with respect to $\Lambda'$, so from Theorem \ref{thm:col-jammed-bar-stress} we find that the packing will be unjammed with respect to $\Lambda'$ if and only if the corresponding bar framework has an infinitesimal flex on $\Lambda'$.

If there is a flex satisfying \eqref{eq:rep_cond}, $P$ is clearly unjammed with respect to $\Lambda'$, so one direction of this proposition is trivial. Now suppose that the $P$ is unjammed with respect to $\Lambda'$, so that the bar framework has a nontrivial flex $\vect{p'}$. We can translate this flex by any element $\lambda \in \Lambda$ to find another infinitesimal flex $\vect{r}'$: defining $\vect{r}'_{(i_0, \lambda_0)} = \vect{p}'_{(i_0, \lambda_0 + \lambda)}$, and using $\vect{p}_{(i_0,\lambda_0)} = \vect{p}_{(i_0, 0)} + \lambda_0$, we find
\begin{align*}
 &(\vect{p}_{(i_0, \lambda_0)} - \vect{p}_{(i_1, \lambda_1)})
\cdot (\vect{r}'_{(i_0, \lambda_0)} - \vect{r}'_{(i_1, \lambda_1)}) \\
&= (\vect{p}_{(i_0, \lambda_0+\lambda)} - \vect{p}_{(i_1, \lambda_1+\lambda)})
\cdot (\vect{p}'_{(i_0, \lambda_0 + \lambda)} - \vect{p}'_{(i_1, \lambda_1 + \lambda)})\\
&= 0
\end{align*}

Then, if $\psi$ is any function from $\Lambda/\Lambda'$ to $\C$, the flex defined by
\begin{equation}
\label{qPrime_def}
\vect{q}'_{(i, \lambda_0)} = \sum_{\lambda  \in  {\Lambda/\Lambda'}}{\psi(\lambda)\vect{p}'_{(i, \lambda_0 - \lambda)}}
\end{equation}
will be a valid infinitesimal flex of the bar framework. Note that by having $\psi$ equal one at zero and zero everywhere else, we can recover the original $\vect{p}'$.

At this point, we can swiftly prove the proposition with representation theory. We will subsequently revisit the argument less abstractly for the case $d=2$.

First, we note that $\Lambda/\Lambda'$ is abelian, and therefore all of its irreducible representations will be one dimensional by Schur's lemma (Proposition 1.7 in \cite{Ful04}). Since this group is abelian, we also have that its conjugacy classes consist of individual group members. Then any $\psi$ will be a class function, and hence any $\psi$ will be a linear combination of characters of irreducible representations (Proposition 2.30 in \cite{Ful04}). But for a one dimensional representation $\rho$, the character $\chi(\rho)$ is equivalent to $\rho$. By choosing $\psi$ equal to one at the identity and equal to zero everywhere else, and by writing $\psi = \sum_{k} {a_k \rho_k}$ as a linear combination of irreducible representations, we find
\[ \vect{p}'_{(i, \lambda_0)} = \sum_{k}{a_k\sum_{\lambda  \in  {\Lambda/\Lambda'}}{\rho_k(\lambda)\vect{p}'_{(i, \lambda_0 - \lambda)}}}\]
Then, as $\vect{p}'$ is nonzero, we can find some irreducible representation $\rho$ that leads to a nonzero infinitesimal flex
\[ \vect{q}'_{(i, \lambda_0)} = \sum_{\lambda \in {\Lambda/\Lambda'}} \rho{(\lambda)}\vect{p}'_{(i, \lambda_0 - \lambda)}\]
But $\rho$ is a homomorphism, so for this $\vect{q}'$ we find
\[\vect{q}'_{(i, \lambda_0)} = \sum_{\lambda  \in  {\Lambda/\Lambda'}}{\rho(\lambda_0)\rho(\lambda-\lambda_0)\vect{p}'_{(i, \lambda_0 - \lambda)}} = \rho(\lambda_0)\vect{q}'_{(i,0)}\]
and we are done.
\end{proof}

We now interpret this proposition for the case that $d=2$. If the two generators for $\Lambda$ are written as $\vect{g}_1$ and $\vect{g}_2$, we can then write the two generators of $\Lambda'$ as $a\vect{g}_1+b\vect{g}_2$ and $c\vect{g}_1+d\vect{g}_2$. Then, for $\rho : \Lambda/\Lambda' \rightarrow \C\backslash\{0\}$ a homomorphism, we have $\rho(\vect{g}_1)^a\rho(\vect{g}_2)^b = 1$ and $\rho(\vect{g}_1)^c\rho(\vect{g}_2)^d = 1$. Write $\mu = \rho(\vect{g}_1)$ and $\mu' = \rho(\vect{g}_2)$. With this notation, we get a more applicable version of the above proposition.

\begin{prop}
\label{prop:FourierDecomp_specific}
A collectively jammed periodic disk packing will be unjammed with respect to $\Lambda'$ if and only if there is some choice of $\mu$ and $\mu'$
where
\[\mu^a\mu'^b = \mu^c\mu'^d = 1\]
so that there is a infinitesimal flex satisfying
\begin{eqnarray}
\label{eq:phPerFlex_cond1}
\vect{p'}_{(i,\lambda+\vect{g}_1)} = \mu\vect{p'}_{(i,\lambda)} \\
\label{eq:phPerFlex_cond2}
\vect{p'}_{(i,\lambda+\vect{g}_2)} = \mu'\vect{p'}_{(i,\lambda)}.
\end{eqnarray}
\end{prop}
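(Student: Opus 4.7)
The plan is to derive this proposition as a direct translation of Proposition~\ref{prop:FourierDecomp_general} into concrete, coordinate-based language for the case $d=2$. The key fact to exploit is that $\Lambda$ is a free abelian group on $\vect{g}_1$ and $\vect{g}_2$, so any homomorphism $\rho : \Lambda \to \C\setminus\{0\}$ is uniquely determined by the pair $(\mu, \mu') = (\rho(\vect{g}_1), \rho(\vect{g}_2)) \in (\C\setminus\{0\})^2$, and any such pair defines a homomorphism. Conversely, a character of $\Lambda/\Lambda'$ is just a character of $\Lambda$ that vanishes on $\Lambda'$, so I would parametrize the characters $\rho : \Lambda/\Lambda' \to \C\setminus\{0\}$ in the general proposition by those pairs $(\mu,\mu')$ satisfying exactly the hypotheses of this statement.

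More concretely, I would first unwind the constraint that $\rho$ is well-defined on $\Lambda/\Lambda'$. Since $\Lambda'$ is generated by $a\vect{g}_1+b\vect{g}_2$ and $c\vect{g}_1+d\vect{g}_2$, triviality of $\rho$ on $\Lambda'$ reduces to the two identities
\[
\rho(a\vect{g}_1+b\vect{g}_2) = \mu^a\mu'^b = 1, \qquad \rho(c\vect{g}_1+d\vect{g}_2) = \mu^c\mu'^d = 1.
\]
This sets up a bijection between the irreducible representations appearing in Proposition~\ref{prop:FourierDecomp_general} and the pairs $(\mu,\mu')$ appearing here.

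Next I would translate the flex condition \eqref{eq:rep_cond} into \eqref{eq:phPerFlex_cond1} and \eqref{eq:phPerFlex_cond2}. Setting $\lambda_0 = \vect{g}_1$ in \eqref{eq:rep_cond} gives $\vect{q}'_{(i,\vect{g}_1)} = \mu\,\vect{q}'_{(i,0)}$, and similarly for $\vect{g}_2$; extending by periodicity with respect to $\Lambda'$ (already guaranteed by Proposition~\ref{prop:FourierDecomp_general}) and combining these two recurrences yields precisely \eqref{eq:phPerFlex_cond1} and \eqref{eq:phPerFlex_cond2} for all $\lambda \in \Lambda$. Conversely, any flex of $(G,\p,\Lambda')$ satisfying \eqref{eq:phPerFlex_cond1} and \eqref{eq:phPerFlex_cond2} automatically satisfies \eqref{eq:rep_cond} with $\rho$ the character determined by $(\mu,\mu')$.

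The only nontrivial point — and even it is little more than bookkeeping — is to check that \eqref{eq:phPerFlex_cond1} and \eqref{eq:phPerFlex_cond2} together define a flex that is consistent under the identifications forced by $\Lambda'$-periodicity. That is, iterating the two rules along the vectors $a\vect{g}_1+b\vect{g}_2$ and $c\vect{g}_1+d\vect{g}_2$ must return the original value of $\vect{p}'_{(i,\lambda)}$, and this is exactly the content of the constraint $\mu^a\mu'^b = \mu^c\mu'^d = 1$. With this consistency check in hand, the proposition follows immediately from Proposition~\ref{prop:FourierDecomp_general}.
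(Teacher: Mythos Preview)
Your proposal is correct and matches the paper's approach exactly: the paper treats this proposition as a direct reinterpretation of Proposition~\ref{prop:FourierDecomp_general} for $d=2$, with the paragraph preceding the statement (defining $\mu=\rho(\vect{g}_1)$, $\mu'=\rho(\vect{g}_2)$ and noting $\mu^a\mu'^b=\mu^c\mu'^d=1$) serving as the entire justification. Your write-up simply fills in the bookkeeping that the paper leaves implicit.
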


\begin{figure}[thbp]
\centering
\includegraphics[scale=0.3]{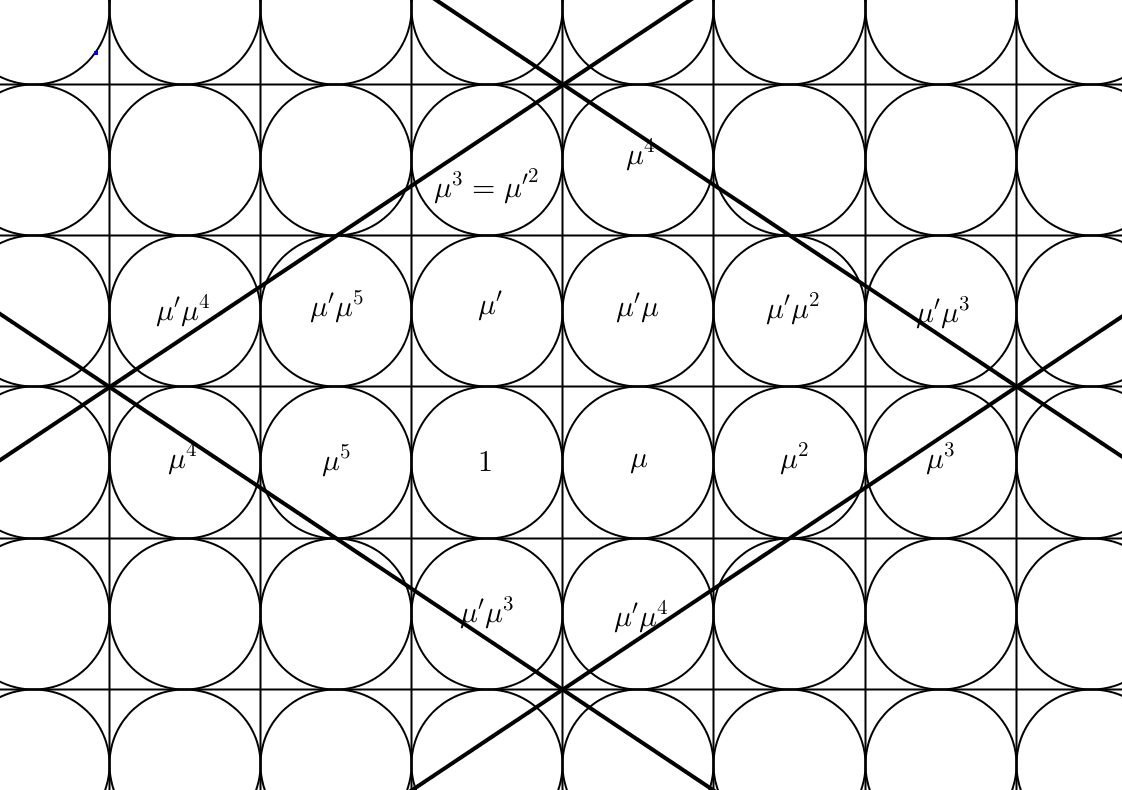}
\caption{A collectively jammed packing and a sublattice. If there is an infinitesimal flex with respect to the sublattice, Proposition \ref{prop:FourierDecomp_specific} shows there is a phase-periodic flex. The phase of each disk's flex is given in their label.}
\label{fig:1disk_rouExample}
\end{figure}

This proposition is quite similar to a result of Power in \cite{Pow11}. Even before Power's work, though, crystallographers were interested in finding the $\mu$ and $\mu'$ that would lead to ``floppy modes'' in crystals, to use Power's term. The subset of$\{(\mu, \mu') : |\mu| = 1, |\mu'|=1\}$ that lead to nontrivial infinitesimal flexes satisfying \eqref{eq:phPerFlex_cond1} and \eqref{eq:phPerFlex_cond2} is known as the rigid-unit mode (RUM) spectrum of a two dimensional crystal \cite{Pow11, Owen11}. The RUM spectrum is often used to understand zeolites, silica crystals characterized by rigid tetrahedra \cite{Pow11,  Weg07, Kap09}. We take Power's term and call an infinitesimal flex with such a $\mu$ and $\mu'$ \emph{phase periodic}.

We consider the collectively jammed packing of one disk in the square torus pictured in Figure \ref{fig:1disk_rouExample}. The lattice in this example has generators $(1,0)$ and $(0,1)$, and these are associated with roots of unity $\mu$ and $\mu'$. The generators of the sublattice in this figure are $(3, 2)$ and $(3, -2)$, so
\begin{equation}
\label{eq:12cellSublattice_rouCond}
\mu^3\mu'^2 = \mu^3\mu'^{-2} = 1
\end{equation}
Let $\vect{p}'_0$ be an infinitesimal flex on the disk marked $1$. Then there are two conditions for $(\mu, \mu')$ to be in the RUM spectrum, and they are
\[(\mu\vect{p}'_0-\vect{p}'_0)\cdot(1,0) = 0\]
and
\[(\mu'\vect{p}'_0-\vect{p}'_0)\cdot(0,1) = 0\]
This packing is collectively jammed, so the case $\mu= \mu' = 1$ is unimportant. If $\mu \ne 1$, then the x-component of $\vect{p}'_0$ is zero, and if $\mu' \ne 1$ the y-component is zero. Then, for a nontrivial infinitesimal flex to exist, exactly one of $\mu$, $\mu'$ will equal one. In particular, the packing is flexible on the sublattice in Figure \ref{fig:1disk_rouExample}, as \eqref{eq:12cellSublattice_rouCond} has the three solutions $(1, -1)$, $(e^{2\pi i /3}, 1)$, and $(e^{4\pi i /3}, 1)$. We show one infinitesimal flex in Figure \ref{fig:1disk_rouExampleFlex}.

In general, this packing will be flexible with respect to the sublattice $\Lambda'$ if one of $\mu$, $\mu'$ can be set to one without forcing the other to be one. Algebraically, this means $\gcd(a,c) \cdot \gcd(b,d) \ne 1$. Geometrically, it means that there is not both a vertical and a horizontal tour, where a tour is a path that hits every disk in the torus defined by the sublattice before returning to its starting location.
The geometric condition for this packing to be flexible was discovered by Connelly and Dickinson in \cite{CD12}, though they used other methods to arrive at the result.
\begin{figure}[htbp]
\centering
\includegraphics[scale=0.23]{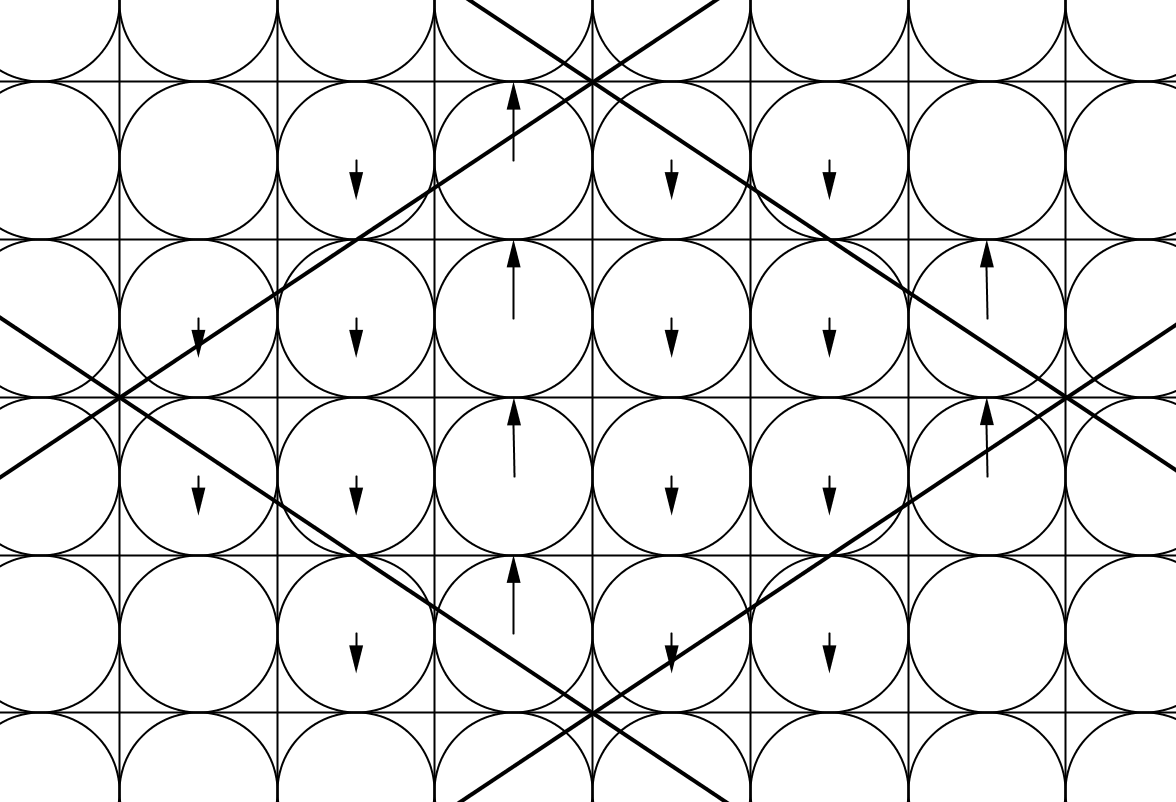}
\caption{The real part of a phase-periodic flex with phase $(\mu, \mu') =(e^{2\pi i /3}, 1)$}
\label{fig:1disk_rouExampleFlex}
\end{figure}

\subsection{A consequence of Proposition \ref{prop:FourierDecomp_general}}
\label{subsect:RUMres}

In an application of the ideas of this section, we prove an original property of the RUM spectrum.

\begin{thm}\label{thm:basicFinite}
If a collectively jammed disk packing is jammed on a $1\times k$ sublattice for some $k>1$, it will be jammed on a $1 \times k'$ lattice for any $k'$ with all prime factors sufficiently large.
\end{thm}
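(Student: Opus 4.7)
The plan is to interpret jammedness on a $1 \times k$ sublattice as an algebraic condition on a complex parameter $\mu'$, via the phase-periodic flex machinery of Proposition~\ref{prop:FourierDecomp_specific}. Taking $\vect{g}_1$ and $k\vect{g}_2$ as generators of the $1 \times k$ sublattice gives $a=1$, $b=0$, $c=0$, $d=k$, so the constraint $\mu^a\mu'^b = \mu^c\mu'^d = 1$ forces $\mu = 1$ and $\mu'^k = 1$. Thus the packing is unjammed on the $1 \times k$ sublattice if and only if there exists a $k$-th root of unity $\mu' \neq 1$ (collective jammedness rules out $\mu' = 1$) admitting a nontrivial, possibly complex, phase-periodic flex with phase $(1, \mu')$.

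Next I would set up the associated \emph{phase-periodic rigidity matrix} $M(\mu')$. There are $n$ translationally distinct disks per $\Lambda$-period, each contributing a complex flex vector $\vect{q}_i \in \C^2$, giving $2n$ complex columns. A contact between $\vect{p}_{(i,0)}$ and $\vect{p}_{(j, m\vect{g}_2)}$ yields the row
\begin{equation*}
\e_k \cdot \bigl(\mu'^{m}\vect{q}_j - \vect{q}_i\bigr) = 0.
\end{equation*}
Multiplying each row by a sufficiently high power of $\mu'$ clears negative exponents, so $M(\mu')$ may be taken to have entries in $\Z[\mu']$, with rank unchanged for $\mu' \in \C^\times$. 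A nontrivial phase-periodic flex at $\mu'$ exists precisely when the column rank of $M(\mu')$ is strictly less than $2n$.

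The crux of the argument is to show that
\begin{equation*}
B = \{\,\mu' \in \C^\times : \rank M(\mu') < 2n\,\}
\end{equation*}
is finite. Since $B$ is the common vanishing locus of all $2n \times 2n$ minors of $M(\mu')$, it is Zariski-closed in $\C^\times$, hence either all of $\C^\times$ or finite. The hypothesis that the packing is jammed on the $1 \times k$ sublattice for some $k > 1$ guarantees the existence of at least one $k$-th root of unity $\mu' \neq 1$ at which $M(\mu')$ has full column rank $2n$, so some $2n \times 2n$ minor is a nonzero polynomial in $\mu'$; therefore $B$ is finite.

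A short number-theoretic observation then closes the proof. Let $D$ be the maximum order of any root of unity lying in $B$ (or $D = 1$ if $B$ contains none). If $k'$ is a positive integer all of whose prime factors exceed $D$, then every divisor $d > 1$ of $k'$ has smallest prime factor $> D$, so $d > D$, and hence no $k'$-th root of unity other than $1$ can lie in $B$. By Proposition~\ref{prop:FourierDecomp_specific}, the packing is therefore jammed on the $1 \times k'$ sublattice. The main obstacle is the middle step: correctly encoding the phase-periodic constraints as a polynomial matrix in $\mu'$ and deducing from the single hypothesis ``jammed on some $1 \times k$ with $k > 1$'' that $B$ is not all of $\C^\times$; once $B$ is known to be finite, the conclusion about large prime factors is immediate.
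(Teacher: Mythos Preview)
Your argument is correct and follows essentially the same route as the paper's proof: reduce to phase-periodic flexes via Proposition~\ref{prop:FourierDecomp_specific}, encode the existence of a flex as the vanishing of all maximal minors of a matrix depending polynomially on the phase parameter, use the hypothesis to rule out the identically-degenerate case, and then observe that only finitely many roots of unity can lie in the resulting finite bad set. The only cosmetic slip is that the matrix entries lie in $\R[\mu']$ (they involve the real edge vectors $\e_k$), not $\Z[\mu']$; this does not affect the Zariski-closedness argument.
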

\begin{proof}
A phase-periodic infinitesimal flex is a solution to \eqref{eq:infFlex_cond} that obeys boundary conditions given by \eqref{eq:phPerFlex_cond1} and \eqref{eq:phPerFlex_cond2}. Then we can write the condition on infinitesimal flexes $\vect{p}'$ as
\[M(\mu, \mu') \vect{p}' = 0\]
where $M(\mu, \mu')$ is a matrix polynomial. Suppose $M$ is $n\times m$, $n\ge m$. Fix $\mu$ and $\mu'$ for the moment. If there are $m$  rows of $M$ that are linearly independent, the nullity is zero and there is no infinitesimal flex. Conversely, if no set of $m$ rows is linearly independent, there is an infinitesimal flex. Then there is an infinitesimal flex if and only if each of the $ n \choose m$ submatrices of $M$ constructed by choosing $m$ rows has a determinant of zero. Unfixing $\mu$ and $\mu'$, we can write each of these determinants as a polynomial $p_i(\mu, \mu')$, $i = 1,2,\ldots,{n\choose m}$.

Now, on a $1\times k$ sublattice, $\mu'$ must be $1$. Then either the polynomials $p_i(\mu, 1)$ are all zero for only finitely many $\mu$, or they are all zero for all $\mu$.  If they have finitely many zeroes, then there is a maximal order to the root $\mu$, and if $k'$ has no prime factor less than this order, Proposition \ref{prop:FourierDecomp_specific} implies that the packing is jammed on the $1\times k'$ sublattice. On the other hand, if the polynomials are all trivial and every $\mu$ is a root, then the packing cannot be jammed on a $1\times k$ sublattice. This proves the theorem.
\end{proof}

\section{Finiteness results for the RUM spectrum}
\label{sect:finite}

In a more-difficult extension of the ``a polynomial has finitely many zeros'' idea used in Theorem \ref{thm:basicFinite}, we can prove the following theorem, which essentially states that if a strictly jammed packing is not consistently strictly jammed, it is unjammed on some sufficiently small lattice. We associate with each collectively jammed packing a number $N_{\min}$ which equals the minimum value of $|\Lambda/\Lambda'|$ so that the packing is collectively unjammed with respect to $\Lambda'$, where $\Lambda$ is the finest period lattice. If the packing is consistently collectively jammed, we say $N_{\min} = \infty$.

\begin{thm}\label{thm:finNStrict}For any positive integer $m$ and dimension $d$ there is a positive integer $N$ so that any strictly jammed $d$-dimensional periodic packing with $m$ orbits of unit balls and $N_{min} > N$ is consistently strictly jammed. 
\end{thm}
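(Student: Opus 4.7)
By Corollary~\ref{cor:SUJ=SJ+UJ}, the theorem reduces to finding $N(m,d)$ such that any strictly jammed packing of $m$ unit balls with $N_{\min}>N(m,d)$ is consistently periodically jammed; equivalently, if a strictly jammed packing of $m$ unit balls has $N_{\min}<\infty$, then $N_{\min}\le N(m,d)$. The plan is to analyze the Laurent-polynomial system cut out by the rigidity matrix under the Fourier decomposition of Proposition~\ref{prop:FourierDecomp_general}, show this system lies in a bounded family over all such packings, and then apply a finiteness statement for torsion points on subvarieties of $(\C\setminus\{0\})^d$ to obtain a uniform bound on $N_{\min}$.

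First I would establish a geometric bound. Fix such a packing with finest period lattice $\Lambda$ and take a Minkowski-reduced basis $\g_1,\ldots,\g_d$. For a strictly jammed packing of $m$ unit balls, I would argue that the covolume $\mathrm{vol}(\Lambda)$ is bounded above by some $V(m,d)$; this should follow from a lower density bound implied by the existence of a strict equilibrium stress and the rigidity of the contact graph. Together with the disjointness bound $|\g_1|\ge 2$, Minkowski's theorem yields $|\g_i|\le L(m,d)$ for all $i$. Each contact-edge displacement $\lambda=\sum_j \lambda_{kj}\g_j$ then has integer coefficients bounded by some $D(m,d)$, and the number of edges per fundamental domain is at most $mK_d/2$, where $K_d$ is the kissing number in $\E^d$.

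Next, by Proposition~\ref{prop:FourierDecomp_general}, unjamming on a sublattice $\Lambda'\subset\Lambda$ of index $n$ is equivalent to the existence of a non-identity character $\rho:\Lambda/\Lambda'\to\C\setminus\{0\}$ for which the rigidity matrix $M(\mu_1,\ldots,\mu_d)$, with $\mu_i=\rho(\g_i)$, has column rank less than $dm$. This rank drop amounts to the simultaneous vanishing of the $dm\times dm$ minors of $M$, a finite collection of Laurent polynomials $q_1,\ldots,q_r$ whose degrees are bounded in terms of $D(m,d)$ and whose coefficients are bounded, since each entry of $M$ is a coordinate of a unit-length edge vector. Let $V\subset(\C\setminus\{0\})^d$ denote their common zero locus; then $N_{\min}$ equals the minimum order of a non-identity torsion point of $V$. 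Invoking Laurent's theorem on torsion points in subvarieties of the algebraic torus, the torsion set of $V$ is a finite union of torsion translates of subtori. If $V$ contains a positive-dimensional such component, torsion points of small order lie in $V$ automatically; otherwise the isolated torsion points of $V$ have order bounded effectively in terms of the degree and height of the $q_i$. Since the $q_i$ belong to a family bounded uniformly in $m$ and $d$, one obtains $N_{\min}\le N(m,d)$ in either case, which yields the desired constant.

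The principal obstacle is the effective Diophantine content of this last step: quantitative control over the orders of torsion points on subvarieties of $(\C\setminus\{0\})^d$ requires effective forms of Laurent's theorem (as in the work of Bombieri--Zannier or Amoroso--Viada), rather than merely the qualitative statement. A secondary obstacle is the uniform covolume (density) bound for strictly jammed packings in the geometric step, which should follow from the rigid-stress structure of the contact graph but needs careful justification; an alternative route is to derive the geometric bound together with the polynomial bound as part of the ``extension of a polynomial has finitely many zeros'' idea flagged in the section's introduction.
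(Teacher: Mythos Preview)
Your reduction via Corollary~\ref{cor:SUJ=SJ+UJ} is correct, but the Diophantine step has a genuine gap, not merely an effectivity issue. The assertion that ``the $q_i$ belong to a family bounded uniformly in $m$ and $d$'' and therefore the minimal torsion order on $V$ is bounded is false in the stated generality: a bounded family of Laurent polynomials need not have a uniform bound on the order of isolated torsion zeros. The one-variable family $\mu+\mu^{-1}=2\cos\theta$ already shows this, and the paper's own Theorem~\ref{thm:20disks} exhibits exactly this phenomenon for the minor polynomials of a contact rigidity matrix: a continuous family of collectively jammed packings whose RUM varieties are cut out by polynomials of uniformly bounded degree and coefficients, yet whose smallest torsion zero has arbitrarily large order. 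Laurent's theorem (even in its effective forms) bounds the \emph{number} and degrees of maximal torsion cosets in $V$, but not the order of the translating torsion point of a zero-dimensional coset; that order can drift without bound as the real coefficients of the $q_i$ vary. So strict jamming must enter the argument in a more structural way than just bounding degrees and heights.

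The paper's route is far more elementary and sidesteps this entirely. It first proves Theorem~\ref{thm:finStrict}: for fixed $m$ and $d$ there are only finitely many noncongruent strictly jammed packings of $m$ unit balls. (This combines a real-algebraic-geometry finiteness argument for realizations of a fixed weighted abstract framework, Theorem~\ref{thm:finTense}, with a combinatorial bound on the number of abstract contact frameworks, Theorem~\ref{thm:2dimAbstractCount}.) Given finitely many packings, one simply takes $N$ to be the maximum of the finite values of $N_{\min}$ over this finite list; any strictly jammed packing with $N_{\min}>N$ then has $N_{\min}=\infty$, and Corollary~\ref{cor:SUJ=SJ+UJ} finishes. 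In effect, the finiteness of the family is the missing ingredient that your approach tries to replace with a uniform Diophantine bound---and the twenty-disk construction shows that such a replacement cannot succeed without using strict jamming more essentially.
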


This comes as a corollary to the theorem

\begin{thm}\label{thm:finStrict}
For any number of balls $m$ and dimension $d$, there are finitely many noncongruent periodic unit-radii packings with $m$ orbits of balls in $\mathbb{E}^d$ that are strictly jammed.
\end{thm}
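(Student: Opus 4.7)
The plan is to show that noncongruent strictly jammed periodic unit-radii packings with $m$ orbits embed as an isolated subset of a compact moduli space, which forces finiteness.

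The main step is to bound the period lattice $\Lambda$ a priori. Its shortest nonzero vector has length at least $2$, since lattice translates of the same unit ball cannot overlap. For an upper bound on the covolume, observe that the contact graph $G$ on $\T^d(\Lambda)$ must be connected and its first homology must surject onto $H_1(\T^d)=\Lambda$; otherwise the lift of $G$ to $\E^d$ would be disconnected, and its components could be independently translated to produce a nontrivial flex, contradicting Theorem~\ref{thm:col-jammed-rigid}. Since each vertex of $G$ has degree at most the kissing number $K_d$ and $G$ has at most $m$ vertices, a spanning tree uses at most $m-1$ edges, and the fundamental cycles associated with the remaining edges correspond to lattice vectors of Euclidean length at most $2m$. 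Since these vectors generate $\Lambda$, selecting $d$ linearly independent ones and applying Hadamard's inequality gives $\Vol(\Lambda) \leq (2m)^d$.

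With the covolume bounded above and the systole bounded below, Mahler's compactness theorem places $\Lambda$, up to orthogonal transformation, in a compact subset of the space of lattices. After normalizing, the orbit representatives $\p_1,\ldots,\p_m$ lie in a fundamental domain of bounded diameter, and the pairwise non-overlap constraints cut out a compact subset of $(\E^d)^m$. Since the contact graph on $\T^d(\Lambda)$ has at most $m$ vertices and at most $mK_d/2$ edges, only finitely many combinatorial types occur, so the full parameter space of such packings embeds into a finite union of compact sets.

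Finally, strictly jammed packings are isolated within each stratum. For a fixed combinatorial type, consider the affine variety cut out by $|\e_k|^2 = 4$ for every edge. By Corollary~\ref{cor:strict-strut-bar}, a strictly jammed configuration has an affinely infinitesimally rigid corresponding bar framework, so the Jacobian of these edge-length equations attains rank $md+d^2-d$, the dimension of the configuration space modulo translations. The implicit function theorem then forces the variety to be zero-dimensional at this point, so strictly jammed configurations form a discrete subset of each stratum. A discrete subset of a compact space is finite, and taking the union over combinatorial types and quotienting by $O(d)$ to identify congruent packings yields the theorem. The main obstacle is the covolume bound, which requires the connectedness argument to translate periodic jamming into a lattice-generation statement; once this is in hand, the remainder is a fairly standard application of Mahler compactness and transversality.
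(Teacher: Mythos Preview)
Your overall strategy (compactness of the moduli space plus isolation of rigid points) is reasonable and genuinely different from the paper's, but two steps do not go through as written.

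\textbf{The surjection of $H_1(G)$ onto $\Lambda$.} Your justification is that if the lift of $G$ to $\E^d$ is disconnected, the components can be independently translated, contradicting periodic jamming. This is false: since $G$ is connected on the torus, the components of the lift form a single $\Lambda$-orbit, so any $\Lambda$-periodic motion moves them all identically and the resulting flex is trivial. Concretely, take one unit ball on $\R^2/\Z^2$ tangent only to its horizontal neighbours. The image of $H_1(G)$ is $\Z\times\{0\}$, yet the packing is periodically jammed (the only $\Z^2$-periodic motion is global translation). What you actually need---and what \emph{strict} jamming does give---is that the image $\Lambda'$ has full rank. If $\operatorname{rank}\Lambda'<d$, set $V=\Lambda'\otimes\R$ and $A=-P_{V^\perp}$; defining $\p'$ along a spanning tree by $\p'_j-\p'_i=P_{V^\perp}\e_k$ is consistent on non-tree edges precisely because each fundamental cycle sums to an element of $\Lambda'\subset V$, and $(\p',A)$ is a nontrivial strict flex since $\tr A<0$ and $A$ is symmetric. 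With full rank you get $\Vol(\Lambda)\le\Vol(\Lambda')\le(2m)^d$ as you wanted, but the appeal should be to strict jamming rather than to Theorem~\ref{thm:col-jammed-rigid}.

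\textbf{The Jacobian rank.} The trivial affine infinitesimal flexes are not just translations: any skew-symmetric $A$ with constant $\p'$ is trivial, so the kernel of the linearized edge-length map has dimension $d(d+1)/2$, and the Jacobian rank at an affinely infinitesimally rigid point is $md+d^2-\tfrac{d(d+1)}{2}$, not $md+d^2-d$. The variety is therefore locally of dimension $\tfrac{d(d+1)}{2}$, not zero; it becomes discrete only after quotienting by the full congruence group $E(d)$. You then need the extra observation that the $E(d)$-orbit through a rigid point is open in the local level set (it is an immersed submanifold of the same dimension), which is routine but should be said.

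By contrast, the paper avoids both the lattice bound and the delicate transversality step. It first shows (Theorem~\ref{thm:2dimAbstractCount}) that only finitely many abstract contact frameworks arise, and then (Theorem~\ref{thm:finTense}) treats the realization space of each as a real algebraic set, which has finitely many path components; Milnor's curve selection lemma shows that two noncongruent realizations in the same component would produce a nontrivial affine flex. This sidesteps Mahler compactness entirely and handles the isolation argument without worrying about the local structure of the variety.
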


In turn, this theorem comes from a more general result on bar frameworks.

\begin{thm}\label{thm:finTense}
Given an abstract bar framework $G$ with positive edge weights $d_k$ that is $d$-periodic with respect to $\Gamma$, there are finitely many noncongruent realizations of $G$ as an affinely infinitesimally rigid framework $(G, \vect{p})$ in $\E^d$ so that $\vect{p}$ maintains the periodicity of the abstract framework under $\Gamma$ and so that the length of a bar in the framework equals the weight of the corresponding edge in the abstract framework
\end{thm}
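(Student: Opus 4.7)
The plan is to realize the set of valid placements of $G$ as a real algebraic variety in a finite-dimensional Euclidean space, argue that affine infinitesimal rigidity forces each Euclidean-congruence orbit to be isolated in that variety, and then invoke the classical fact that real algebraic sets have only finitely many connected components.

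Concretely, choose representatives of the $n$ vertex orbits of $\Gamma$ with positions $\vect{p}_1, \ldots, \vect{p}_n \in \E^d$ and let the translation lattice associated with $\Gamma$ be generated by $\vect{g}_1, \ldots, \vect{g}_d$, so that a realization is parametrized by a point of $X = \R^{d(n+d)}$. Each edge of $G$ in one period can be written as $\vect{e}_k = \vect{p}_{j_k} - \vect{p}_{i_k} + \sum_\ell c_{k\ell} \vect{g}_\ell$ with integers $c_{k\ell}$ determined by the abstract periodic structure, so the length condition $|\vect{e}_k|^2 = d_k^2$ is polynomial in $(\vect{p}, \vect{g})$. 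Let $V \subset X$ denote the corresponding real algebraic variety of valid realizations.

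Suppose now that $(\vect{p}, \vect{g}) \in V$ is affinely infinitesimally rigid. The Jacobian of the edge-length map at this point has as its kernel exactly the space of affine infinitesimal flexes at $(\vect{p}, \vect{g})$, and by the affine rigidity hypothesis this kernel agrees with the image in $X$ of the infinitesimal Euclidean motions. By the implicit function theorem, $V$ is therefore locally a smooth submanifold of $X$ whose dimension matches that of the $E(d)$-orbit through $(\vect{p}, \vect{g})$, where $E(d) = \R^d \rtimes O(d)$ acts on $X$ preserving $V$. Since the orbit is itself a smooth submanifold of $V$ of the same dimension, it fills out a neighborhood of $(\vect{p}, \vect{g})$ in $V$, which is to say the congruence orbit of this realization is isolated in $V$.

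Finally, fix translations by setting $\vect{p}_1 = \vect{0}$ to obtain a real algebraic subvariety $V_0$ on which the compact group $O(d)$ still acts by isometries. Every isolated $O(d)$-orbit is compact, hence clopen in $V_0$, and so equals a finite union of connected components of $V_0$. Since $V_0$ has only finitely many connected components (classical real algebraic geometry, after Whitney), only finitely many such orbits can occur, which gives the desired finiteness of noncongruent affinely infinitesimally rigid realizations. The main technical subtlety I expect lies in the middle step: one must carefully identify the edge-length Jacobian with the affine rigidity operator used in Section~\ref{sect:strict} so that the implicit function theorem gives a local description of $V$ of the correct dimension, and must keep track of the dimension of the $E(d)$-orbit when the realization has a positive-dimensional stabilizer (for example, when all vertices are collinear).
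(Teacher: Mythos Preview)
Your proposal is correct and reaches the same conclusion by a related but genuinely different route.  Both you and the paper parametrize periodic realizations as a real algebraic set in $\R^{d(n+d)}$ and finish by invoking the finiteness of its connected components; the divergence is in how one passes from ``affinely infinitesimally rigid'' to ``the congruence class is isolated in $V$.''

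The paper argues by contradiction using Milnor's curve selection lemma: if two noncongruent rigid realizations lie in the same path component, one finds a boundary point of a congruence class and an analytic arc in $V$ leaving that class; differentiating the arc yields a nontrivial affine infinitesimal flex, contradicting rigidity.  You instead argue directly that at a rigid realization the $E(d)$-orbit is open in $V$.  This is essentially a sandwich argument: the Zariski tangent space to $V$ (the kernel of the edge-length Jacobian) coincides with the tangent space to the orbit, so locally $V$ is trapped between the orbit and a smooth level set of the same dimension cut out by a maximal-rank subcollection of the edge constraints.  Your route trades the curve selection lemma for this more elementary dimension count, at the cost of the bookkeeping you flag.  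Two remarks: first, your worry about positive-dimensional stabilizers is actually empty here, since for $(\p,\g)$ to describe a genuine periodic realization the vectors $\g_1,\dots,\g_d$ must span $\R^d$, which forces the $E(d)$-action to be free; second, the phrase ``by the implicit function theorem $V$ is locally a smooth submanifold'' oversells what that theorem alone gives, because the edge-length map is typically not a submersion --- the sandwich step is where the real work happens, and it would strengthen the write-up to make it explicit.
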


\begin{proof}
We choose $d$ generators from $\Gamma$. $G$ has a finite number of orbits under $\Gamma$; find a maximal collection of vertices $\vect{p}_1, \vect{p}_2, \dots \vect{p}_n$ distinct under $\Gamma$. The generators of $\Gamma$ correspond to generators $\vect{g}_1, \vect{g}_2, \dots \vect{g}_d$ of a lattice with respect to which the packing is jammed. The $n$ vectors $\vect{p}_i$ and $d$ vectors $\vect{g}_i$ fully determine a periodic realization.  An assignment of $\vect{p}$ and $\vect{g}$ gives a realization if and only if, for each edge $\vect{e}_k$ with weight $d_k$ connecting $\vect{p}_{i, 0}$ and $\vect{p}_{j, \lambda}$, where $\lambda = \sum_l a_l \vect{g}_l$, we have
\[ | \vect{p}_i - \vect{p}_j - \sum_{l=1}^d a_l\vect{g}_l | = d_k \]
These reduce to some finite set of quadratic equations in $d^2 + nd$ real variables, from the $d$ vectors $\vect{g}_i$ and the $n$ vectors $\vect{p}_i$. Then we can describe the set of realizations as a real algebraic set $X$ in $\mathbb{R}^{d^2 + nd}$.

From \cite{BasuPollackRoy00}, we know that a real algebraic set consists of finitely many path connected components. Suppose that two noncongruent realizations were connected by a path $\rho: [0,1] \rightarrow X$. The set of realizations congruent to $\rho(0)$ is closed, and hence so is its preimage. In particular, there exists some $a$ so $\rho(a)$ is congruent to $\rho(0)$ while $\rho(a')$ is not congruent for $a<a'< a+ \epsilon$ for some positive $\epsilon$. At this point,  we follow an approach seen in \cite{Roth81}. We use Milnor's curve selection lemma from \cite{Milnor61} to say that there is a real analytic path $\psi$ so $\psi(0) = \rho(a)$ and $\psi(x)$ is a realization not congruent to  $\rho(a)$ for $0 < x \le 1$. Taking the derivative of this path, we find a nontrivial affine infinitesimal flex. Then distinct affinely infinitesimally rigid realizations are on distinct path components. Then there are finitely many realizations.
\end{proof}

We have proved that there are only finitely many noncongruent affinely infinitesimally rigid realizations for each abstract framework. Now, to show Theorem \ref{thm:finStrict} we need to prove that there only finitely many abstract frameworks that can come from a disk packing. We need a lemma first
\begin{lem*}
Given any lattice $\Lambda$ in $\mathbb{E}^d$, there is some basis $\vect{g}_1, \dots, \vect{g}_d$ of $\Lambda$, $|\vect{g}_1| \le |\vect{g}_2| \le \dots \le |\vect{g}_d|$, so 
\[|a_1\vect{g}_1 + \dots + a_d\vect{g}_d| ^2 \ge K_d |\vect{g}_1| (a_1^2 + a_2^2 + \dots + a_n^2)\]
where $K_d$ is some positive constant that depends only on $d$.
\end{lem*}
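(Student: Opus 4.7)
The plan is to take $\vect{g}_1, \ldots, \vect{g}_d$ to be a Minkowski-reduced basis of $\Lambda$: $\vect{g}_1$ is a shortest nonzero vector of $\Lambda$, and for $k \geq 2$, $\vect{g}_k$ is a shortest vector of $\Lambda$ such that $\vect{g}_1, \ldots, \vect{g}_k$ extends to a basis of $\Lambda$. This construction automatically enforces $|\vect{g}_1| \le |\vect{g}_2| \le \cdots \le |\vect{g}_d|$ as required in the statement.

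The key step is the classical reduction-theory estimate: for a Minkowski-reduced basis, the Gram matrix $G = (\vect{g}_i \cdot \vect{g}_j)$ is uniformly comparable to its diagonal, in the sense that there exists a positive constant $c_d$ depending only on $d$ (and not on the particular lattice) such that
\[
a^T G a \;\geq\; c_d \sum_{i=1}^d a_i^2 |\vect{g}_i|^2 \quad \text{for all } a \in \R^d.
\]
I would prove this by introducing the Gram-Schmidt orthogonalization $\vect{g}_1^*, \ldots, \vect{g}_d^*$ with coefficients $\mu_{ij}$ and invoking the Minkowski-reduction inequalities $|\mu_{ij}| \le 1/2$ together with a uniform lower bound $|\vect{g}_i^*|^2 \geq c'_d |\vect{g}_i|^2$ (which follows by iterating the Lovász-type inequalities satisfied by a reduced basis). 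Writing $a_1 \vect{g}_1 + \cdots + a_d \vect{g}_d = \sum_j b_j \vect{g}_j^*$ via the upper-triangular change of variables $b_j = a_j + \sum_{i > j} \mu_{ij} a_i$, the squared length equals $\sum_j b_j^2 |\vect{g}_j^*|^2$, and an induction on $j$ from the largest index downward absorbs the off-diagonal contributions using $|\mu_{ij}| \le 1/2$, yielding the claim. Alternatively, this comparison can simply be cited from a standard reference on reduction theory such as Cassels' \emph{Introduction to the Geometry of Numbers}.

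Finally, since $|\vect{g}_i|^2 \geq |\vect{g}_1|^2$ for every $i$ by the ordering of the reduced basis, we conclude
\[
|a_1 \vect{g}_1 + \cdots + a_d \vect{g}_d|^2 \;=\; a^T G a \;\geq\; c_d |\vect{g}_1|^2 \sum_{i=1}^d a_i^2,
\]
which gives the stated inequality with $K_d = c_d$. (Here I am reading the right-hand side of the lemma as $K_d |\vect{g}_1|^2 \sum a_i^2$; the expression as literally printed with $|\vect{g}_1|$ rather than $|\vect{g}_1|^2$ appears to be a typo, since otherwise the two sides of the inequality are dimensionally inconsistent.)

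The main obstacle is establishing the reduction-theory comparison between $G$ and its diagonal, with constants depending only on $d$. All the arithmetic afterward is trivial. The delicate bookkeeping lies in controlling the size-reduction coefficients $\mu_{ij}$ and the decay of the Gram-Schmidt lengths $|\vect{g}_j^*|$ uniformly over all lattices; these are standard consequences of the Minkowski conditions, but making them quantitative and combining them in the upper-triangular expansion is where the real work sits.
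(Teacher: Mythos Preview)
Your approach is correct but differs from the paper's. The paper does not invoke Minkowski reduction or the full Gram-matrix comparison; instead it cites Hermite's bound on the \emph{orthogonality defect}: there is a basis with $C_d \det \Lambda \ge |\g_1|\cdots|\g_d|$. From this it follows that the projection of $\g_i$ onto the orthogonal complement of the remaining basis vectors has length at least $|\g_i|/C_d$, so projecting $a_1\g_1+\cdots+a_d\g_d$ onto that complement gives $|a_1\g_1+\cdots+a_d\g_d| \ge |a_i|\,|\g_i|/C_d \ge |a_i|\,|\g_1|/C_d$ for every $i$; squaring and summing yields $K_d = 1/(dC_d^2)$. This is shorter than your route and bypasses the Gram--Schmidt bookkeeping entirely. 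Your argument, by contrast, proves the stronger diagonal comparison $a^T G a \ge c_d \sum_i a_i^2 |\g_i|^2$, which is more than the lemma needs but is the standard reduction-theory statement. One small caution: the bound $|\mu_{ij}|\le 1/2$ is a feature of \emph{size-reduced} (e.g.\ LLL) bases, not an automatic consequence of the Minkowski conditions; the comparison you want is still true for Minkowski-reduced bases, but the clean way is to cite it (as you suggest) rather than to derive it from that particular inequality. Your reading of $|\g_1|$ as $|\g_1|^2$ is correct---the printed exponent is a typo.
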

\begin{proof}
It is a result due to Hermite that, for any dimension $d$, there is some constant $C_d$ so that any lattice $\Lambda$ in $\mathbb{E}^d$ has some basis $\vect{g}_1, \dots, \vect{g}_d$ so that the volume spanned by the vectors $\det \Lambda$ satisfies $C_d \cdot \det \Lambda \ge |\vect{g}_1| \dots |\vect{g}_d|$; in other words, the \emph{orthogonality constant} $\frac{\det \Lambda}{|\vect{g}_1| \dots |\vect{g}_d|}$ is bounded (see \cite{Cong06}). From this, the projection of $\g_i$ onto the orthogonal complement to the subspace generated by $\{ \g_1, \dots, \g_{i-1}, \g_{i+1}, \dots, \g_d \}$ has length at least $|\g_i|/C_d$. Taking $K_d = \frac{1}{dC_d^2}$ gives the lemma.
\end{proof}

\begin{thm}\label{thm:2dimAbstractCount}
For any positive $d$ and $m$, there are finitely many nonisomorphic periodic $d$-dimensional abstract frameworks that correspond to a periodic packing of $m$ unit balls in $\E^d$
\end{thm}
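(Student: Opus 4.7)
The plan is to describe each abstract framework combinatorially by its finite quotient graph (a multigraph on at most $m$ vertices) together with a $\Lambda$-valued label on each edge recording the lattice displacement between the endpoints' chosen lifts, and then to show that after a canonical choice of lifts both the quotient multigraph and the possible labels come from finite sets depending only on $m$ and $d$.

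First, using the orthogonality lemma just proved, I fix a basis $\vect{g}_1, \ldots, \vect{g}_d$ of $\Lambda$ with $|\vect{g}_1| \leq \cdots \leq |\vect{g}_d|$ and
\[
|a_1\vect{g}_1 + \cdots + a_d\vect{g}_d|^2 \geq K_d\,|\vect{g}_1|^2\,(a_1^2+\cdots+a_d^2).
\]
Translating any ball of the packing by $\vect{g}_1$ produces a disjoint unit ball in the same orbit, so $|\vect{g}_1| \geq 2$. Next, the kissing number $\kappa_d$ in dimension $d$ bounds the degree of each vertex in the contact graph, so the quotient multigraph has at most $m$ vertices and at most $\tfrac{1}{2}m\kappa_d$ edges; there are finitely many isomorphism classes of such multigraphs. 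Third, I choose a spanning forest $T$ of this quotient, root each component, and lift vertices greedily: starting from an arbitrary lift of each root, at each step take the next lift to be the tangent ball at distance $2$ from the current representative. Every $T$-edge then has label $0$, and any two representatives $\vect{p}_i, \vect{p}_j$ in the same component lie at distance at most $4(m-1)$, since they are joined in $T$ by a path of at most $m-1$ edges each of Euclidean length $2$. Finally, every contact edge lies inside a single component of the quotient, so for a contact edge from $\vect{p}_i$ to $\vect{p}_j+\lambda$ one has $|\lambda| \leq |\vect{p}_i-\vect{p}_j|+2 \leq 4m-2$. Writing $\lambda = \sum_k a_k\vect{g}_k$ and invoking the orthogonality bound with $|\vect{g}_1| \geq 2$ gives $a_1^2+\cdots+a_d^2 \leq (2m-1)^2/K_d$, so each label is drawn from a finite set depending only on $m$ and $d$. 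Multiplying the finite number of quotient multigraphs by the finite number of allowed edge-labelings yields finitely many isomorphism classes of abstract frameworks.

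The main obstacle is confining the labels $\lambda$ to a bounded subset of $\Lambda$ uniformly across all admissible lattices: the lattice may be highly anisotropic, with $|\vect{g}_d|$ arbitrarily large compared to $|\vect{g}_1|$, and with an arbitrary choice of basis a Euclidean ball need not contain a bounded number of lattice points. The orthogonality lemma is exactly the tool that converts a Euclidean norm bound on $\lambda$ into a simultaneous bound on the integer coordinates $(a_1,\ldots,a_d)$ in the chosen basis, which together with $|\vect{g}_1|\geq 2$ closes the argument. Disconnectedness of the quotient multigraph causes no issue, because contact edges never cross components, so each component can be normalized independently and its label count bounded in the same way.
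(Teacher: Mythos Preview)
Your proof is correct and leans on the same pivotal ingredients as the paper's---the reduced-basis orthogonality bound together with $|\g_1|\ge 2$---but you organize the representatives differently. The paper simply places the $m$ centers inside one fundamental parallelepiped of the reduced basis; any tangent neighbor then lies within Euclidean distance $2$ of that parallelepiped, and the lemma converts this into a bound $|a_i|\le 1+\sqrt{1/K_d}$ on the lattice coordinates of the neighbor, so the $m$ balls can touch at most $A_d m$ translates and the crude count $2^{A_d m^2}$ follows immediately. You instead encode the abstract framework as a gain graph (quotient multigraph plus $\Z^d$-labels), normalize via a spanning forest so that tree edges carry label $0$, bound the multigraph by the kissing number $\kappa_d$, and bound the remaining labels through the forest diameter. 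Your route makes the combinatorial structure explicit and yields a somewhat more informative bound, at the price of the extra spanning-forest and kissing-number steps; the paper's parallelepiped choice is shorter and bypasses both. Either way the final count depends only on $m$ and $d$.
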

\begin{proof}
Consider a packing periodic with respect to $\Lambda$. We choose the $m$ disks to lie in a fundamental parallelepiped of a basis $\vect{v}_1, \dots, \vect{v}_d$ of $\Lambda$ satisfying the conditions of the lemma. Call the fundamental parallelepiped $D$. Since no unit balls can intersect, we find that $|v_1| \ge 2$. If the $m$ balls touch another ball, the touched ball must have center lying within the region 
\[ \{ \vect{v} \in \mathbb{E}^d \, \, | \, \,  \exists \vect{v}' \in D \, \, \, \mbox{such that} \, \, \, |\vect{v}-\vect{v}'| \le 2 \}\]
By the lemma, this region is a subset of 
\[\{ \vect{v} \in \mathbb{E}^d\, \,  | \, \, \vect{v} = \sum a_i\vect{v}_i,  \, \, \forall i \, \, |a_i| \le 1+ \sqrt{1/K_d} \}\]
which contains at most $A_dm$ balls, where $A_d$ is a constant dependent on $d$. Then, since the abstract framework is determined by the connections of the $m$ balls to these $A_dm$ balls through periodicity, we find that there can be no more than $2^{A_dm^2}$ abstract frameworks corresponding to ball packings, proving the theorem.
\end{proof}

Since the graph of a strictly jammed packing is affinely infinitesimally rigid as a bar framework, we see that the previous two theorems together show that there can only be finitely many strictly jammed packings of $m$ unit $d$-dimensional balls, establishing Theorem \ref{thm:finStrict} and hence Theorem \ref{thm:finNStrict}.

We can also find a variant of all four of the previous results to find a corresponding result to Theorem \ref{thm:finNStrict} for collective jamming with a fixed lattice. This looks as follows
\begin{thm} \label{thm:finN}
Given a torus $\T^d(\Lambda)$ and positive integer $m$, there is some $N$ so that any collectively jammed packing of $m$ balls on $\T^d(\Lambda)$ with $N_{min} > N$ is consistently collectively jammed.
\end{thm}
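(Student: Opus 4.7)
The plan is to mirror the proof structure of Theorem \ref{thm:finNStrict}, replacing affine infinitesimal flexes with ordinary periodic infinitesimal flexes on the fixed torus $\T^d(\Lambda)$. The key claim to establish is the analogue of Theorem \ref{thm:finStrict} in the fixed-lattice setting: there are only finitely many noncongruent collectively jammed packings of $m$ unit balls on $\T^d(\Lambda)$. Once this is in hand, the theorem follows immediately, since each such packing has $N_{\min}$ either equal to $\infty$ (when consistently collectively jammed) or equal to some specific finite positive integer, and taking $N$ to be the maximum of these finitely many finite values gives the desired bound.

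To prove the finiteness of collectively jammed packings, I would adapt Theorems \ref{thm:finTense} and \ref{thm:2dimAbstractCount} to the fixed-lattice setting. For the analogue of Theorem \ref{thm:finTense}, the aim is to show that for any periodic abstract bar framework $G$ with prescribed positive edge weights, there are finitely many noncongruent realizations of $G$ as a periodically infinitesimally rigid framework on $\T^d(\Lambda)$. With $\Lambda$ fixed, a realization is determined by the $n$ orbit representatives $\vect{p}_1,\ldots,\vect{p}_n$, giving $nd$ real variables subject to quadratic edge-length equations; the resulting realization space is a real algebraic set in $\R^{nd}$ with finitely many path components by \cite{BasuPollackRoy00}. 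If two noncongruent periodically infinitesimally rigid realizations lay in the same component, Milnor's curve selection lemma \cite{Milnor61} would yield a real analytic path between them whose derivative is a nontrivial periodic infinitesimal flex of the bar framework, contradicting periodic infinitesimal rigidity. For the analogue of Theorem \ref{thm:2dimAbstractCount}, the argument is considerably simpler than the original since $\Lambda$ is fixed: the $m$ orbit representatives lie in a fixed fundamental domain, and each can only touch balls whose centers lie in the bounded $2$-neighborhood of this domain, which meets only finitely many lattice translates of the fundamental domain. Hence only finitely many abstract frameworks can arise.

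The main obstacle is executing the Milnor curve selection step cleanly in the fixed-lattice setting, so that a non-constant analytic path between noncongruent realizations really produces a nontrivial periodic infinitesimal flex of the bar framework. Since the edge-length equations are polynomial purely in the coordinates of $\vect{p}_1,\ldots,\vect{p}_n$ and the Euclidean isometry group acts algebraically on the realization space, this adaptation should proceed in parallel with the proof of Theorem \ref{thm:finTense}. Combined with the observation that collective jamming implies periodic infinitesimal rigidity of the corresponding bar framework (by Theorems \ref{thm:col-jammed-bar-stress} and \ref{thm:rigid-inf-rigid}), this yields the required finiteness and hence Theorem \ref{thm:finN}.
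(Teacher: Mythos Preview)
Your proposal is correct and follows essentially the same route the paper takes: the paper explicitly states that Theorem~\ref{thm:finN} is obtained by rerunning the four-theorem chain (Theorems~\ref{thm:finNStrict}, \ref{thm:finStrict}, \ref{thm:finTense}, \ref{thm:2dimAbstractCount}) with the lattice $\Lambda$ held fixed, exactly as you outline. Your observation that the analogue of Theorem~\ref{thm:2dimAbstractCount} becomes easier once $\Lambda$ is fixed is also the right simplification.
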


\begin{thm}
Given a torus $\T^d(\Lambda)$ and positive integer $m$, there are finitely many  noncongruent unit-radii packings of $m$ balls in $\mathbb{E}^d$ that are collectively jammed.
\end{thm}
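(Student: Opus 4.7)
The plan is to mimic the proof of Theorem~\ref{thm:finStrict} with the lattice $\Lambda$ held fixed rather than allowed to vary. Concretely, I would split the argument into a fixed-lattice analog of Theorem~\ref{thm:finTense} (finitely many rigid realizations of each abstract framework) and a fixed-lattice analog of Theorem~\ref{thm:2dimAbstractCount} (finitely many possible abstract frameworks).

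For the first step, given an abstract bar framework $G$ that is $d$-periodic with respect to $\Gamma$, a fixed identification $\Gamma \cong \Lambda$, and positive edge weights $d_k$, I would show there are only finitely many noncongruent realizations $(G,\p)$ in $\E^d$ of period $\Lambda$ in which each bar has length equal to its weight and which are periodically infinitesimally rigid. Choose orbit representatives $\p_1,\ldots,\p_n$; since $\Lambda$ is fixed, the length constraints become a finite system of quadratic equations in only the $nd$ real coordinates of $\p$, cutting out a real algebraic set $X \subset \R^{nd}$. The Basu--Pollack--Roy bound gives finitely many path components of $X$. If two noncongruent realizations lay in the same component, one selects (after modding out by the finite-dimensional Lie group of congruences of $\E^d$ preserving $\Lambda$) a real analytic arc $\psi$ via Milnor's curve selection lemma with $\psi(0)=\p$ and $\psi(x)$ noncongruent to $\p$ for $x>0$; differentiating yields a nontrivial infinitesimal flex of the bar framework on $\T^d(\Lambda)$, contradicting periodic infinitesimal rigidity. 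Hence at most one congruence class of rigid realizations occurs per component, and there are only finitely many in total.

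For the second step, I would bound the number of abstract frameworks that can arise as the contact graph of a unit-ball packing of $m$ balls on $\T^d(\Lambda)$. Fix a fundamental domain $D$ for $\Lambda$ and place representatives $\p_1,\ldots,\p_m \in D$. Each tangency between $\p_i$ and a lift of $\p_j$ corresponds to a unit vector based at $\p_i$, and distinct contacts at $\p_i$ give unit vectors pairwise separated by at least $60^\circ$, so the degree of each vertex is bounded by the kissing number $k_d$ of $\E^d$. Moreover, each edge is labeled by a translation $\lambda \in \Lambda$ and the tangency constraint forces $|\lambda|$ to be bounded in terms of $\Lambda$ and $\mathrm{diam}(D)$, so only finitely many labels $\lambda$ can occur on any edge. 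Together these yield only finitely many isomorphism classes of abstract $d$-periodic frameworks realizable as contact graphs of an $m$-ball packing on $\T^d(\Lambda)$.

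Combining the two steps using Theorem~\ref{thm:col-jammed-rigid} and Theorem~\ref{thm:col-jammed-bar-stress}: the bar framework underlying any collectively jammed packing is periodically infinitesimally rigid, each of the finitely many abstract frameworks produces finitely many noncongruent rigid realizations, and hence only finitely many collectively jammed packings of $m$ unit balls on $\T^d(\Lambda)$ exist. The main technical hurdle, as in Theorem~\ref{thm:finTense}, is the curve-selection step: one must verify that the derivative of the analytic arc represents a genuine nontrivial periodic infinitesimal flex and not merely a trivial one, which requires first quotienting by the (finite-dimensional) group of congruences fixing $\Lambda$. Beyond this, the proof is essentially a mechanical transfer of the strictly jammed argument to the fixed-lattice setting.
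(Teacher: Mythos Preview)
Your proposal is correct and mirrors exactly what the paper intends: it explicitly says the proof ``uses no new ideas from the four-theorem train above,'' i.e., one reruns Theorems~\ref{thm:finTense} and~\ref{thm:2dimAbstractCount} with $\Lambda$ fixed, dropping the $d^2$ lattice variables from the algebraic set and replacing the Hermite-basis lemma by the trivial observation that only finitely many $\lambda\in\Lambda$ satisfy $|\lambda|\le\mathrm{diam}(D)+2$. Your identification of the curve-selection step as the only delicate point is apt and matches the level of detail in the paper's own treatment of Theorem~\ref{thm:finTense}.
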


\begin{thm}
Given some lattice $\Lambda$ and an abstract bar framework $G$ with positive edge weights $d_k$ that is $d$-periodic with respect to $\Gamma$, there are finitely many noncongruent realizations of $G$ as an infinitesimally rigid framework $(G, \vect{p})$  on $\E^d$ so that $\vect{p}$ is periodic with respect to $\Lambda$ and so that the length of a bar in the framework equals the weight of the corresponding edge in the abstract framework
\end{thm}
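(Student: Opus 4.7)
The plan is to adapt the proof of Theorem~\ref{thm:finTense} essentially verbatim, with the only differences being that the lattice $\Lambda$ is now held fixed rather than free, and ordinary infinitesimal rigidity replaces affine infinitesimal rigidity. I would fix generators $\gamma_1, \ldots, \gamma_d$ of $\Gamma$ and a basis $\vect{g}_1, \ldots, \vect{g}_d$ of $\Lambda$, choose a maximal set of vertices $v_1, \ldots, v_n$ of $G$ distinct under $\Gamma$, and note that a $\Lambda$-periodic realization is completely determined by the positions $\vect{p}_1, \ldots, \vect{p}_n \in \E^d$. For each edge $\vect{e}_k$ of weight $d_k$ connecting $v_i$ to $\gamma \cdot v_j$ with $\gamma = \sum_l a_l \gamma_l$, the length constraint becomes the quadratic equation
\[ \Bigl| \vect{p}_i - \vect{p}_j - \sum_{l=1}^d a_l \vect{g}_l \Bigr|^2 = d_k^2. \]
These finitely many equations cut out a real algebraic set $X \subset \R^{nd}$; the variable count is $nd$ rather than $nd + d^2$ precisely because the $\vect{g}_l$ are now fixed.

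By the finite-component result from \cite{BasuPollackRoy00}, it then suffices to show that each path component of $X$ contains at most one congruence class of infinitesimally rigid realizations. Following the proof of Theorem~\ref{thm:finTense}, I would suppose two noncongruent infinitesimally rigid realizations lie in one component, connect them by a path $\rho$, select $a$ on the boundary of the closed preimage of the congruence class of $\rho(0)$, and invoke Milnor's curve selection lemma \cite{Milnor61} to produce a real analytic path $\psi$ with $\psi(0) = \rho(a)$ whose values for $t > 0$ are realizations not congruent to $\rho(a)$. Differentiating, $\psi'(0)$ is an infinitesimal flex of $\rho(a)$ with respect to the fixed lattice $\Lambda$, and the conclusion follows once we verify that this flex is nontrivial, contradicting infinitesimal rigidity.

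The main obstacle is precisely this last step: on a fixed lattice the trivial flexes are exactly the uniform infinitesimal translations, so one must rule out $\psi'(0)$ being merely such a translation. This is the classical subtlety in Roth's \cite{Roth81} argument that infinitesimal rigidity implies local rigidity; it is handled by quotienting the configuration space by the group of global translations, under which $\psi$ descends to an analytic curve that is non-constant (since $\psi(t)$ is noncongruent to $\psi(0)$ for small $t > 0$), so that after subtracting translational components if necessary the first nonvanishing derivative lifts back to a genuinely nontrivial flex of the framework. Once this standard technicality is dispatched, finiteness of path components of $X$ immediately bounds the number of noncongruent infinitesimally rigid realizations.
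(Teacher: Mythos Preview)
Your proposal is correct and follows exactly the approach the paper intends: the paper explicitly omits this proof, stating that it ``uses no new ideas from the four-theorem train above,'' meaning precisely the adaptation of Theorem~\ref{thm:finTense} with the lattice fixed and periodic infinitesimal rigidity in place of affine infinitesimal rigidity. Your identification of the translation-triviality subtlety and its resolution via quotienting is the standard Roth--Whiteley maneuver the paper alludes to with its citation of \cite{Roth81}, so nothing is missing.
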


We omit the proof, as it uses no new ideas from the four-theorem train above.

 Finally, we note that, instead of assuming $m$ unit-radii balls, we could have assumed balls of fixed radii $r_1, \dots, r_m$; we can generalize Theorem \ref{thm:2dimAbstractCount} to this.

These finiteness results are interesting because we can push them no further. If we don't keep the extra $\Lambda$ dependence in Theorem \ref{thm:finN} and if we replace `strictly' with `collectively' in Theorem \ref{thm:finNStrict}, we get a statement that is no longer true.

\begin{thm*}
For any $N$, there is a packing of twenty disks on a torus $\T^2(\Lambda)$ that is not consistently collectively jammed but which is collectively jammed with respect to $\Lambda'$ if $ | \Lambda / \Lambda' | < N$. In other words, there is a packing that satisfies $N < N_{\min} < \infty$
\end{thm*}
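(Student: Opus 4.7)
The plan leverages the phase-spectrum characterization in Proposition~\ref{prop:FourierDecomp_specific}. For a collectively jammed packing periodic with respect to $\Lambda$, failure to be jammed on a sublattice $\Lambda'$ of index $N$ is equivalent to the existence of a nontrivial phase-periodic infinitesimal flex at some $(\mu,\mu')$ on the real torus $|\mu|=|\mu'|=1$ satisfying the root-of-unity conditions $\mu^a\mu'^b = \mu^c\mu'^d = 1$ attached to $\Lambda'$. Thus $N_{\min}$ equals the smallest sublattice index whose associated root-of-unity conditions are compatible with a point of the packing's phase spectrum. To realize $N < N_{\min} < \infty$, it suffices to produce a 20-disk packing whose phase spectrum meets the real torus at some root-of-unity pair of ``high'' order but at no root-of-unity pair of ``low'' order.

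I would construct an explicit one-parameter family of 20-disk configurations $P_t$ on a fixed torus $\T^2(\Lambda)$ so that (i) the contact graph carries a strictly positive equilibrium stress, which via Theorem~\ref{thm:col-jammed-bar-stress} gives collective jamming on $\Lambda$ whenever the bar framework is infinitesimally rigid there, and (ii) the rigidity matrix $M(\mu,\mu';t)$ of the bar framework drops rank along a nonconstant real-analytic curve $C_t$ on the real torus, varying nontrivially with $t$. The \emph{infinitesimal edge-flex} machinery introduced in Section~8 is the right tool: parameterizing flexes by edge vectors rather than vertex displacements compresses the linear-algebra calculation of $C_t$ enough to be carried out symbolically, with the full computation deferred to Appendix~A.

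Given such a family, fix $N$ and let $\Sigma_N$ denote the finite set of root-of-unity pairs whose associated sublattice has index at most $N$. Choose a root-of-unity pair $(\mu_0,\mu'_0)$ whose associated sublattice has index strictly greater than $N$, and solve the (polynomial-in-$t$) equation $(\mu_0,\mu'_0)\in C_t$ for some parameter $t=t_\star$. For generic choice of $(\mu_0,\mu'_0)$, $t_\star$ avoids the finitely many parameter values at which $C_t$ passes through $\Sigma_N$, so the packing $P_{t_\star}$ is collectively unjammed on the sublattice associated to $(\mu_0,\mu'_0)$ but jammed on every sublattice of index at most $N$. This realizes $N < N_{\min} < \infty$.

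The principal obstacle is step~(i)--(ii): exhibiting a concrete 20-disk configuration that fits without overlap, has the prescribed contact graph, carries a positive equilibrium stress, and has a nontrivial, tunable phase spectrum. The geometric and algebraic verifications are what Appendix~A is designed to handle, and the edge-flex reformulation is essential to keep the rigidity calculation---otherwise an analysis of an unwieldy $40\times|E|$ matrix---within symbolic reach.
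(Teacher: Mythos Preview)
Your high-level strategy matches the paper's: build a one-parameter family of twenty-disk packings, use the phase-periodic (RUM) description of Proposition~\ref{prop:FourierDecomp_specific}, and tune the parameter so that the RUM spectrum hits a root-of-unity pair of large order while missing all pairs of small order. Two points deserve attention, however.

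First, the paper does \emph{not} work on a fixed torus. The twenty-disk configuration has equal radii and a fixed contact graph; on a fixed lattice such a configuration is rigid up to translation, so there is no nontrivial one-parameter family there. Instead, because the packing is not strictly jammed, the paper varies the lattice $\Lambda$ itself, which deforms the two pentagons and produces a genuine one-parameter family indexed by a real ``shape constant'' $x$. Your claim of a family on a fixed $\T^2(\Lambda)$ would require a different construction that you have not supplied.

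Second, and more importantly, your ``generic choice'' step is where the real content hides. You propose to pick a high-order pair $(\mu_0,\mu'_0)$ and then solve $(\mu_0,\mu'_0)\in C_t$ for $t$. But $\{C_t\}$ is a one-parameter family of one-dimensional curves in a two-torus; a priori most pairs $(\mu_0,\mu'_0)$ lie on no $C_t$ at all, and the equation has no solution. What makes the paper's argument go through is the explicit calculation (via the edge-flex reduction) that the RUM spectrum is precisely the level set
\[
\frac{\mathrm{Re}(\mu')-1}{\mathrm{Re}(\mu)-1}=x,
\]
with $x$ the shape constant. Distinct $x$ give disjoint curves, each root-of-unity pair determines a single value of $x$, and the values coming from pairs of order $\ge N$ are dense in $\R^{\ge 0}$. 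One then chooses such an $x$ inside the interval $I$ of realizable shape constants and away from the finitely many $x$-values coming from low-order pairs. Your sketch gestures at this but does not articulate the level-set structure that makes the density/avoidance argument work; without it, the step ``solve for $t_\star$'' is not justified.
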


We will save the proof of this result until later, as the construction needs techniques that we develop in coming sections.
% Note: I have commented this remark out, but mostly because I don't understand how it fits. What does it mean?
%  It is interesting to compare this result with a similar one in \cite{Connelly-1990}.  There, a general infinite packing of disks in $\E^d$ is defined to be \emph{$n$-rigid} if it is rigid (equivalently jammed) when all but $n$ packing disks are held fixed.  So in  \cite{Connelly-1990} two one parameter families of packings, numbered $13$ and $14$, $P_n$ in the plane are constructed such that $P_n$ is $n$ rigid, but for all $n$, $P_n$ is not $m$ rigid for $m$ sufficiently large.   For these examples the underlying bar frameworks are always rigid, but the stress, all with the same sign, vanishes for a sufficiently large number of disks, whereas in this paper it is the bar framework that looses its infinitesimal rigidity eventually.   

% sequential periodic jamming?
% strict ultraperiodic jamming?

% edge flexes and low density (may get its own section)
\section{Edge Flexes}

In this section, we present an alternative way of viewing infinitesimal flexes that borrows from Whiteley's notion of \emph{parallel drawing} that can be found in \cite{Crapo-Whiteley}. Here, instead of considering the infinitesimal motion of vertices, we consider the infinitesimal motions of edges.

\begin{defn}\label{defn:edgeflex}
For a periodic tensegrity $(G, \p, \Lambda)$, let the sequence of vectors $\e' = (\e_1',\e_2',\ldots, \e_{|E|}')$ satisfy
\begin{equation}\label{eq:efDefA}
\e_k \cdot \e_k'
\begin{cases}
= 0, & \mbox{if } \e_k \in B \\
\leq 0, & \mbox{if } \e_k \in C \\
\geq 0, & \mbox{if } \e_k\in S
\end{cases}.
\end{equation}
Suppose further that there is some linear transformation $A$ so that, for every path $P$ of edges from $\p_{(i, 0)}$ to $\p_{(i, \lambda)}$, $\lambda$ some vector in the period lattice $\Lambda$,
\begin{equation}\label{eq:edgeflex}
\sum_{\e_k \in P} \e_k' = A \lambda
\end{equation}

Then $\e'$ is called a \emph{affine infinitesimal edge flex}. It is called a \emph{strict infinitesimal flex} if $A$ also satisfies \eqref{eq:sif}. Lastly, it is called a \emph{periodic infinitesimal edge flex} if $A= 0$. In all cases, we call the flex \emph{trivial} if $\e_k' = \vect{0}$ for all $k$.
\end{defn}

\begin{prop}\label{prop:edgeflex}
For a periodic tensegrity $(G, \p, \Lambda)$, all periodic infinitesimal edge flexes are trivial if and only if  all periodic infinitesimal flexes are trivial. That is, there exists a nontrivial periodic infinitesimal edge flex if and only if there exists a nontrivial periodic infinitesimal flex.
\end{prop}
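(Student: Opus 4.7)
My plan is to prove both implications by explicit construction. The easier direction (nontrivial vertex flex $\Rightarrow$ nontrivial edge flex) comes from taking differences along edges, while the harder direction (nontrivial edge flex $\Rightarrow$ nontrivial vertex flex) comes from integrating the edge flex along paths, where path-independence is the key point.

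For the backward direction, given a nontrivial periodic infinitesimal flex $\p'$, I will set $\e_k' := \p_j' - \p_i'$ for each edge $\e_k$ directed from $\p_i$ to $\p_j$. The sign conditions \eqref{eq:efDefA} then follow immediately from \eqref{eq:infFlex_cond}. For any path in the lifted infinite tensegrity from $\p_{(i,0)}$ to $\p_{(i,\lambda)}$ the sum $\sum \e_k'$ telescopes to $\p_{(i,\lambda)}' - \p_{(i,0)}'$, which is $0$ by $\Lambda$-periodicity of $\p'$, so the linear map $A = 0$ and $\e'$ is a periodic infinitesimal edge flex. Assuming the tensegrity graph is connected on the torus (the standard case for packings), $\p'$ nontrivial means some pair of adjacent vertices has $\p_i' \neq \p_j'$, so the corresponding $\e_k'$ is nonzero and $\e'$ is nontrivial.

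For the forward direction, given a nontrivial periodic infinitesimal edge flex $\e'$, I will fix a base vertex $\p_{i_0}$ in each connected component of the graph on $\T^d(\Lambda)$, declare $\p_{i_0}' := \vect{0}$, and for any vertex $\p_j$ in the same component define $\p_j' := \sum_{\e_k \in P} \e_k'$ along any path $P$ from $\p_{i_0}$ to $\p_j$ in the torus. The main verification is path-independence: if $P_1, P_2$ are two such paths, then the concatenation $P_1 \cdot P_2^{-1}$ is a closed loop at $\p_{i_0}$ in the torus, and its lift to the infinite tensegrity is a path from $\p_{(i_0, 0)}$ to $\p_{(i_0, \lambda)}$ for some $\lambda \in \Lambda$ (the winding of the loop through the fundamental domain). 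By \eqref{eq:edgeflex} with $A = 0$, this lifted sum equals $A\lambda = 0$, so $P_1$ and $P_2$ yield the same value of $\p_j'$. The resulting $\p'$ is $\Lambda$-periodic because I defined it on the torus, the identity $\p_j' - \p_i' = \e_k'$ along each edge turns \eqref{eq:efDefA} into \eqref{eq:infFlex_cond}, and $\p'$ is nontrivial since some edge has $\p_j' - \p_i' = \e_k' \neq 0$.

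The main obstacle is the path-independence step: I must translate the condition of \eqref{eq:edgeflex} — stated in the lifted tensegrity with endpoints differing by some $\lambda \in \Lambda$ — into the statement that every closed loop on the quotient torus sums to zero. This identification of torus loops with lifted paths between $\Lambda$-translates, together with the $A = 0$ hypothesis, is exactly what forces the integral $\p_j' = \sum \e_k'$ to be well-defined. A secondary subtlety is the possibility that the torus graph is disconnected, which I handle by running the base-vertex construction one connected component at a time; the backward direction technically requires connectedness to rule out a pathological $\p'$ that is constant on each component with different constants, but for the contact graphs of jammed packings, which are the relevant case here, this does not arise.
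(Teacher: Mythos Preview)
Your proof is correct and follows essentially the same approach as the paper: differences $\e_k' = \p_j' - \p_i'$ in one direction, and path-integration from a fixed base vertex in the other. Your treatment is in fact more careful than the paper's terse version, since you explicitly justify path-independence via the lifted-path interpretation of \eqref{eq:edgeflex} with $A=0$, and you flag the connectedness caveat that the paper silently assumes.
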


\begin{proof}
Suppose first there is a nontrivial infinitesimal flex $\p'$. Then taking $\e_k' = \p_j' - \p_i'$ for any edge $\e_k$ from $\p_i$ to $\p_j$ gives a nontrivial infinitesimal edge flex.

Suppose instead that there is a nontrivial infinitesimal edge flex $\e'$. Let us fix $\vect{p}'_i$ to be some vector in $\E^d$. Then, for any vertex $\p_j$, the sum of $\e_k'$ along any directed path from $i$ to $j$ is some constant $\vect{c}_j'$. Letting $\p_j' = \vect{c}_j' + \p_i'$ gives a nontrivial infinitesimal flex $\p'$.
\end{proof}

The analogues for affine and strict infinitesimal edge flexes may be proved in the same manner, so we omit their proofs.

\begin{prop}\label{prop:aief}
For a periodic tensegrity $(G, \p, \Lambda)$, all affine infinitesimal edge flexes are trivial if and only if all affine infinitesimal flexes are trivial.
\end{prop}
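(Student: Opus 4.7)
The plan is to mirror the proof of Proposition~\ref{prop:edgeflex}, with the transformation $A$ threaded through both constructions.

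For the forward direction, suppose $(\p', A)$ is a nontrivial affine infinitesimal flex. Define
\[ \e_k' = \p_j' - \p_i' + A\e_k \]
for every directed edge $\e_k$ from $\p_{(i,0)}$ to $\p_{(j,\lambda)}$. Condition \eqref{eq:efDefA} follows immediately from \eqref{eq:affif}. For the path-sum condition \eqref{eq:edgeflex}, along any path $P$ from $\p_{(i,0)}$ to $\p_{(i,\lambda)}$ the $\p_{j_k}'-\p_{i_k}'$ terms telescope to $\vect{0}$ by periodicity of $\p'$, while the $A\e_k$ terms telescope to $A\lambda$. Nontriviality transfers: if $\e_k' = \vect{0}$ for every $k$, then $\p_j' - \p_i' = -A\e_k$ for each edge; comparing two edges between the same pair of orbits but with different lattice shifts would force $A$ to vanish on a spanning set of $\Lambda$, hence $A = 0$, after which $\p'$ is constant across the framework, contradicting the nontriviality of $(\p', A)$.

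For the reverse direction, start from a nontrivial affine infinitesimal edge flex $\e'$ with associated transformation $A$ and set $\vect{f}_k = \e_k' - A\e_k$. By \eqref{eq:edgeflex},
\[ \sum_{\e_k \in P} \vect{f}_k = A\lambda - A\lambda = \vect{0} \]
for any path $P$ from $\p_{(i,0)}$ to $\p_{(i,\lambda)}$, so in particular $\vect{f}$ has vanishing sum around every closed loop of the periodic graph. Fix a base vertex $\p_{(i_0,0)}$, set $\p_{i_0}' = \vect{0}$, and for each other orbit $j$ define $\p_j'$ as the sum of $\vect{f}_k$ along any directed path from $\p_{(i_0,0)}$ to $\p_{(j,0)}$. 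Path-independence follows from the vanishing-on-loops property, and the extension $\p_{(j,\lambda)}' = \p_j'$ is consistent because the difference of two such definitions is the sum along a loop returning to orbit $j$, which again vanishes. A direct computation recovers $\e_k' = \p_j' - \p_i' + A\e_k$, and combined with \eqref{eq:efDefA} this shows $(\p', A)$ is an affine infinitesimal flex; its nontriviality is verified analogously to the forward direction.

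The main obstacle is the well-definedness and periodicity of the constructed $\p'$, which hinges on the observation that \eqref{eq:edgeflex} supplies precisely the loop-sum conditions needed, including the ones that go around the period. All other steps are direct adaptations of Proposition~\ref{prop:edgeflex} with the affine correction $A\e_k$ inserted at the analogous points.
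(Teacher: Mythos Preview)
Your constructions are exactly what the paper intends by ``proved in the same manner,'' but the nontriviality transfers need care. In the forward direction, appealing to ``two edges between the same pair of orbits but with different lattice shifts'' assumes such edges exist and that their shift differences span $\Lambda$; neither is guaranteed. Use instead the path-sum identity you already established: if every $\e_k' = \vect{0}$ then $A\lambda = \sum_{P} \e_k' = 0$ for every $\lambda \in \Lambda$ reachable by a path, so (by connectedness) $A = 0$, and then $\p_j' - \p_i' = 0$ along every edge forces $\p'$ constant.

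The reverse direction has a genuine gap. For any nonzero skew-symmetric $A$, the assignment $\e_k' = A\e_k$ satisfies \eqref{eq:efDefA} with equality and has path-sum $A\lambda$, so it is a \emph{nontrivial} affine infinitesimal edge flex in the sense of Definition~\ref{defn:edgeflex}. Your construction then yields $\vect{f}_k = \vect{0}$, hence $\p' \equiv \vect{0}$, and the resulting pair $(\vect{0}, A)$ is an infinitesimal rotation, which is a \emph{trivial} affine vertex flex. So ``its nontriviality is verified analogously to the forward direction'' is false as written. This reflects a mismatch between the edge-flex notion of triviality (all $\e_k' = \vect{0}$) and the vertex-flex notion (rigid motions, which allow skew $A \ne 0$). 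To repair the argument you must either enlarge the trivial edge flexes to include $\e_k' = A\e_k$ with $A$ skew, or, given a nontrivial edge flex with associated $A$, first replace $A$ by its symmetric part $\tfrac{1}{2}(A + A^T)$ (subtracting the rotation $\e_k \mapsto \tfrac{1}{2}(A - A^T)\e_k$, which preserves \eqref{eq:efDefA} and the path-sum condition) so that triviality of the constructed $(\p', A)$ genuinely forces $A = 0$ and $\e' = \vect{0}$.
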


\begin{prop}\label{prop:sief}
For a periodic tensegrity $(G, \p, \Lambda)$, all strict infinitesimal edge flexes are trivial if and only if all strict infinitesimal flexes are trivial.
\end{prop}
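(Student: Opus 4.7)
The plan is to mimic the proof of Proposition \ref{prop:edgeflex}, adding the extra linear transformation $A$ to the correspondence. I will set up the map $(\p', A) \leftrightarrow \e'$ given by
\[
\e'_k = \p'_j - \p'_i + A\e_k
\]
for each edge $\e_k$ running from $\p_{(i, 0)}$ to $\p_{(j, \lambda_k)}$. The forward direction is immediate: \eqref{eq:efDefA} follows from \eqref{eq:affif}; for \eqref{eq:edgeflex}, summing along any path from $\p_{(i, 0)}$ to $\p_{(i, \nu)}$ the $\p'_j - \p'_i$ contributions telescope to zero (the endpoints represent the same vertex in the quotient torus, so their $\p'$ values agree) while the $A\e_k$ contributions sum to $A\nu$; and \eqref{eq:sif} is inherited verbatim since I use the same $A$.

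For the reverse direction, given a strict edge flex $\e'$, I recover $A$ by evaluating \eqref{eq:edgeflex} along $d$ paths that close up modulo a basis $\lambda_1, \ldots, \lambda_d$ of $\Lambda$; this is consistent across different path choices precisely because $\e'$ is assumed to satisfy \eqref{eq:edgeflex}. Then, fixing $\p'_1$ arbitrarily, I define
\[
\p'_j := \p'_1 + \sum_{\e_k \in P} \e'_k - A\mu
\]
for any path $P$ from $\p_{(1, 0)}$ to $\p_{(j, \mu)}$, and verify that the resulting pair $(\p', A)$ is indeed a strict infinitesimal flex, with \eqref{eq:affif} following from \eqref{eq:efDefA} and \eqref{eq:sif} inherited from the given edge flex.

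The main obstacle is well-definedness of $\p'_j$: two paths from $\p_{(1,0)}$ to $\p_{(j, \mu)}$ differ by a cycle in the periodic graph, which decomposes into torus loops of the form $\p_{(i, 0)} \to \p_{(i, \nu)}$ for various $\nu \in \Lambda$. Along each such loop, \eqref{eq:edgeflex} forces $\sum \e'_k = A\nu$, which exactly offsets the $A\mu$ subtraction so that the cycle contributes nothing to $\p'_j$. Once this is established, the correspondence provides the desired bijection between strict infinitesimal flexes and strict infinitesimal edge flexes (the kernel consisting of the constant $\p'$ with $A = 0$ flexes that map to the zero edge flex), and the biconditional between triviality of flexes and triviality of edge flexes follows by the same argument used in Proposition \ref{prop:edgeflex}.
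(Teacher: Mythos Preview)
Your approach matches what the paper intends: the paper omits the proof, saying only that it ``may be proved in the same manner'' as Proposition~\ref{prop:edgeflex}, and your correspondence $\e'_k = \p'_j - \p'_i + A\e_k$ together with its forward and backward constructions is exactly that extension.

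There is, however, a genuine issue in the final triviality step that your last paragraph glosses over. You correctly identify the kernel of the map $(\p',A)\mapsto\e'$ as the pairs with $\p'$ constant and $A=0$. But the \emph{trivial} strict infinitesimal flexes, per Section~\ref{sect:strict}, are the infinitesimal rigid motions of $\E^d$, which in the paper's convention means $\p'$ constant and $A$ \emph{skew-symmetric}, not necessarily zero. A trivial flex with nonzero skew $A$ therefore maps to the edge flex $\e'_k = A\e_k$, which is nonzero (the edges span $\E^d$) and hence \emph{nontrivial} under Definition~\ref{defn:edgeflex}. Worse, $\e'_k = A\e_k$ is a valid strict edge flex for every skew-symmetric $A$: it satisfies \eqref{eq:efDefA} with equality since $\e_k^T A\e_k=0$, satisfies \eqref{eq:edgeflex} with that same $A$, and has $\tr A = 0$. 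So in dimension $d\ge 2$ there are always nontrivial strict edge flexes, and the implication ``all strict flexes trivial $\Rightarrow$ all strict edge flexes trivial'' cannot hold as written. The cleanest fix is to amend Definition~\ref{defn:edgeflex} so that an affine or strict edge flex is called trivial when $\e'_k = A\e_k$ for some skew-symmetric $A$; with that amendment your bijection argument goes through without change.
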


For planar frameworks in $\E^2$, we realize that every edge cycle can be decomposed as a sum of cycles around faces. Denoting a pair of generators for $\Lambda$ by $\g_1$ and $\g_2$, we see that every path from $\p_{(i, 0)}$ to $\p_{(i, a_1\g_1 + a_2\g_2)}$ can be decomposed as $a_1$ arbitrary paths from $\p_{(i, 0)}$ to $\p_{(i, \g_1)}$, $a_2$ arbitrary paths from  $\p_{(i, 0)}$ to $\p_{(i, \g_2)}$, and some set of cycles.  From this, we get the following proposition.

\begin{prop}\label{item:PEF_faceDecomp}
A planar periodic sequence of vectors $\e'$ is a periodic infinitesimal flex of a periodic tensegrity if and only if it satisfies \eqref{eq:efDefA} and
\begin{itemize}
\item[1)] \label{item:PEF_face}The sum of edge flexes around any face is zero.
\item[2)] \label{item:PEF_left} For some path from $\p_{(0, 0)}$ to $\p_{(0, \g_1)}$, the sum of edge flexes is zero.
\item[3)] \label{item:PEF_up} For some path from $\p_{(0, 0)}$ to $\p_{(0, \g_2)}$, the sum of edge flexes is zero.
\end{itemize}
\end{prop}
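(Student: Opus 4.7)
The strategy is to pass to the finite CW complex that the planar periodic tensegrity induces on the torus $\T^2(\Lambda)$, with the faces of the planar framework as 2-cells. The ``only if'' direction is immediate from Definition~\ref{defn:edgeflex}: the edge-sign condition \eqref{eq:efDefA} is shared by the two statements, and \eqref{eq:edgeflex} with $A=0$ specializes to condition 1 when $\lambda = 0$ and to conditions 2 and 3 when $\lambda = \g_1$ or $\g_2$.

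For the ``if'' direction, associate to each oriented 1-cycle $C$ on the torus graph the value $\omega(C) := \sum_{\e_k \in C} \e_k'$, which is well-defined because $\e'$ is periodic. Condition 1 says that $\omega$ vanishes on every face boundary; since these boundaries generate the image of $\partial_2 : C_2(\T^2(\Lambda)) \to C_1(\T^2(\Lambda))$ in our CW decomposition, $\omega$ descends to a homomorphism $\bar\omega : H_1(\T^2(\Lambda); \Z) \to \R^2$. The paths supplied by conditions 2 and 3 project to cycles representing the two standard generators of $H_1(\T^2(\Lambda); \Z) \cong \Z^2$, and those conditions force $\bar\omega$ to vanish on both generators; hence $\bar\omega = 0$ and $\omega$ vanishes on every 1-cycle of the torus graph.

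To finish, given any path $P$ in the cover from $\p_{(i, 0)}$ to $\p_{(i, \lambda)}$, its projection $\bar P$ to $\T^2(\Lambda)$ is a closed loop, so $\omega(\bar P) = 0$; by periodicity of $\e'$ this yields $\sum_{\e_k \in P} \e_k' = 0$, meeting \eqref{eq:edgeflex} with $A = 0$. The one place that merits care is the homological step: checking that the planar framework genuinely descends to a CW decomposition of the torus (so that ``sum of face cycles'' and ``$\partial_2$-boundary'' coincide) and that the paths in conditions 2 and 3 represent generators of $H_1$ rather than trivial classes. Both points rest on the planarity hypothesis and on the fact that $\p_{(0, \g_j)}$ and $\p_{(0, 0)}$ project to the same vertex on $\T^2(\Lambda)$; once these are in hand, the entire argument collapses to one application of the universal property of $H_1$.
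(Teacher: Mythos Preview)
Your argument is correct and is, at bottom, the same idea the paper uses: reduce \eqref{eq:edgeflex} with $A=0$ to a cycle-decomposition statement, then note that faces generate the null-homologous cycles while the two paths in conditions 2) and 3) account for the remaining directions.  The only real difference is packaging.  The paper stays in $\E^2$ and argues directly that every closed edge-walk in the plane is a $\Z$-sum of face boundaries (using simple connectivity), and that every path from $\p_{(i,0)}$ to $\p_{(i,a_1\g_1+a_2\g_2)}$ decomposes, modulo such cycles, as $a_1$ copies of a $\g_1$-path and $a_2$ copies of a $\g_2$-path.  You instead pass to the quotient torus, encode $\omega$ as a cochain, and kill it on $H_1(\T^2;\Z)$.

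Your route is cleaner and makes the role of planarity explicit (it is exactly what guarantees a CW structure on $\T^2(\Lambda)$ whose $2$-cells are the faces); it also generalizes without change to higher-genus quotients if one ever wanted that.  The paper's route avoids any mention of homology and is perhaps more accessible, but it leaves implicit the step you flagged---that the specified $\g_1$- and $\g_2$-paths really do generate everything modulo face cycles---which is precisely the computation $H_1(\T^2;\Z)\cong\Z^2$ that you invoke.
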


There is another simplification in the case that the planar tensegrities are bar frameworks. If every edge $\e_k$ is a bar,  we find that $\e_k'$ is normal to $\e_k$, so we may instead consider scalars $\alpha_k$ such that $\e_k' = \alpha_k R(\pi/2) \e_k$, where $R(\theta)$ is the matrix for rotation by $\theta$. An edge flex $\e'$ is then uniquely determined by the sequence of real numbers $\vect{\alpha} = (\alpha_1,\ldots,\alpha_{|E|})$. We call $\alpha_k$ the infinitesimal rotation of the edge $\e_k$ induced by the flex $\vect{\alpha}$.

The infinitesimal rotation of the edges of triangles and rhombi are then clear.

\begin{lem}\label{lem:triangle}
For any triangle determined by edges $\e_{i_1}, \e_{i_2}, \e_{i_3}$, the infinitesimal rotations $\alpha_{i_1}$, $\alpha_{i_2}$, and $\alpha_{i_3}$ are equal.
\end{lem}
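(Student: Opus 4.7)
The plan is to use the face-cycle condition from Proposition~\ref{item:PEF_faceDecomp}, which says that the sum of edge flexes around any face is zero. Since a triangle is a face (or at least a cycle of length three in the framework), we have
\[
\e_{i_1}' + \e_{i_2}' + \e_{i_3}' = 0.
\]
Substituting the representation $\e_k' = \alpha_k R(\pi/2)\e_k$ and factoring out the rotation matrix gives $\alpha_{i_1}\e_{i_1} + \alpha_{i_2}\e_{i_2} + \alpha_{i_3}\e_{i_3} = 0$.

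Next I would use the fact that the three directed edges of the triangle themselves sum to zero, i.e.\ $\e_{i_1} + \e_{i_2} + \e_{i_3} = 0$, after appropriately orienting them around the cycle. (A small bookkeeping step: if the cycle condition from Proposition~\ref{item:PEF_faceDecomp} is written with a consistent orientation around the face, then both the unrotated edges and the edge flexes satisfy the same sign conventions, so the two relations are directly comparable.) Using this to eliminate $\e_{i_3} = -\e_{i_1} - \e_{i_2}$ in the previous identity yields
\[
(\alpha_{i_1} - \alpha_{i_3})\e_{i_1} + (\alpha_{i_2} - \alpha_{i_3})\e_{i_2} = 0.
\]

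Finally, since the edges $\e_{i_1}$ and $\e_{i_2}$ form two sides of an honest (nondegenerate) triangle in the plane, they are linearly independent. The displayed equation then forces $\alpha_{i_1} = \alpha_{i_3}$ and $\alpha_{i_2} = \alpha_{i_3}$, giving the desired common value.

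There is no real obstacle here; the only thing to be careful about is orientation. One needs to check that the face-sum condition in Proposition~\ref{item:PEF_faceDecomp} and the geometric identity $\e_{i_1} + \e_{i_2} + \e_{i_3} = 0$ use compatible orientations so that the $\alpha_k$ coefficients line up with the correct edge vectors; this is just a matter of orienting each $\e_{i_r}$ around the triangle in the same rotational sense. Once that is settled the argument is a three-line computation.
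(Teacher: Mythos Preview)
Your argument is correct and is exactly the natural one: the cycle condition together with $\e_{i_1}+\e_{i_2}+\e_{i_3}=0$ and the linear independence of two sides of a nondegenerate triangle forces all three $\alpha$'s to coincide. The paper itself does not give a proof of this lemma, treating it as immediate from the definitions, so there is nothing further to compare.
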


\begin{lem}\label{lem:rhombus}
For a rhombus, determined by edges $\e_{i_1}, \e_{i_2}, \e_{i_3}, \e_{i_4}$, the infinitesimal rotations of parallel edges are the same, with $\alpha_{i_1} = \alpha_{i_3}$ and $\alpha_{i_2} = \alpha_{i_4}$.
\end{lem}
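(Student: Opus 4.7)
The plan is to reduce the rhombus identity to the elementary fact that the sum of edge-flex vectors around any cycle of the graph is zero, together with the geometric structure of a rhombus.

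First I would establish the cycle condition: for any closed cycle of edges $\e_{i_1}, \e_{i_2}, \e_{i_3}, \e_{i_4}$ oriented consistently so that $\e_{i_1}+\e_{i_2}+\e_{i_3}+\e_{i_4}=\vect{0}$, the corresponding edge flexes must satisfy
\begin{equation*}
\e_{i_1}' + \e_{i_2}' + \e_{i_3}' + \e_{i_4}' = \vect{0}.
\end{equation*}
For a periodic infinitesimal edge flex, this is exactly the face condition in Proposition~\ref{item:PEF_faceDecomp}(1); alternatively, one can recover each $\e_k'$ as a difference $\p_j'-\p_i'$ of vertex flex vectors (via the construction in the proof of Proposition~\ref{prop:edgeflex}), whence the sum telescopes to zero around any cycle. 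Either way, the sum of $\e_k'$ around the rhombus vanishes.

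Next I would exploit the geometry of the rhombus. Orienting the four sides cyclically, the opposite sides point in opposite directions, so $\e_{i_3} = -\e_{i_1}$ and $\e_{i_4} = -\e_{i_2}$. Substituting $\e_k' = \alpha_k R(\pi/2)\e_k$ into the cycle identity and collecting terms gives
\begin{equation*}
(\alpha_{i_1}-\alpha_{i_3})\, R(\pi/2)\e_{i_1} + (\alpha_{i_2}-\alpha_{i_4})\, R(\pi/2)\e_{i_2} = \vect{0}.
\end{equation*}

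Finally, I would observe that $\e_{i_1}$ and $\e_{i_2}$ are adjacent, non-parallel sides of the rhombus and therefore linearly independent; since $R(\pi/2)$ is invertible, $R(\pi/2)\e_{i_1}$ and $R(\pi/2)\e_{i_2}$ are linearly independent as well. Hence both coefficients must vanish, yielding $\alpha_{i_1}=\alpha_{i_3}$ and $\alpha_{i_2}=\alpha_{i_4}$. The only mild subtlety is keeping track of the orientation convention so that opposite sides are genuinely antiparallel in the cycle (which explains why the scalars land with the ``same sign'' and not opposite); apart from that bookkeeping the argument is immediate, and I do not expect any real obstacle.
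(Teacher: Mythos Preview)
Your argument is correct and is precisely the natural verification: the paper itself states Lemma~\ref{lem:rhombus} without proof, treating it as an immediate consequence of the face condition in Proposition~\ref{item:PEF_faceDecomp} together with the fact that opposite sides of a rhombus are antiparallel and adjacent sides are linearly independent. There is nothing to add; your bookkeeping remark about orientations is the only point worth noting, and you have handled it correctly.
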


Both triangles and rhombi appear frequently in the bar frameworks corresponding to equal-radii packings. Thus, these two lemmas aid greatly in simplifying the calculations for jammedness of equal-radii packings, as can be seen in Section \ref{sect:examples}.
\section{Examples of Packings}\label{sect:examples}

In this section, we give a few examples of disk packings with unusual jamming properties. 

\subsection{A Low-Density Consistently Collectively Jammed Example}

Our first example of a disk packing has the contact graph seen in Figure \ref{fig:dodecagon}. 
There is a huge dodecahedral hole in this packing, leading to the low density of $\delta \approx 0.59$. However, it is still consistently collectively jammed.

\begin{figure}[htbp]
\begin{center}
\includegraphics[width=\textwidth] {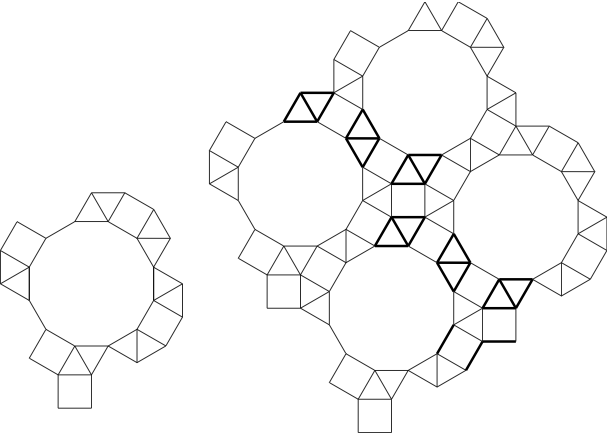}
\caption{Shown on the left is a contact graph of a single period of the packing. On the right is the contact graph of $2$ by $2$ tiling of the period of the packing. The bolded edges show equalities of infinitesimal edge rotations; we see from this that $\alpha_2 = \alpha_4 = \alpha_{12}$.}\label{fig:dodecagon}
\end{center}
\end{figure}

\begin{prop}
\label{prop:dodecagon}
The packing represented by the packing graph in Figure \ref{fig:dodecagon}
 has density $\delta = \frac{4\pi}{6 \sqrt{3} + 11} \approx 0.59$ and is consistently collectively jammed.
\end{prop}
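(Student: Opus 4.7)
The plan is to separate the proof into a density computation and a proof of consistent collective jamming. For jamming, I would invoke Proposition~\ref{prop:consIsCons} to reduce to checking periodic jamming on every sublattice $\Lambda'$ of $\Lambda$, and then apply Theorem~\ref{thm:col-jammed-bar-stress}: on each $\T^2(\Lambda')$ I need an equilibrium stress on the strut tensegrity together with infinitesimal rigidity of the underlying bar framework. Any stress periodic with respect to $\Lambda$ is automatically periodic with respect to every sublattice of $\Lambda$, so one equilibrium stress on $\T^2(\Lambda)$ suffices; the real work will be the bar rigidity on every cover.

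For the density I would count the faces of the contact graph as twelve unit equilateral triangles, five unit squares, and one regular dodecagon of side~$1$. The period area is then $12\cdot\frac{\sqrt{3}}{4}+5+3(2+\sqrt{3})=6\sqrt{3}+11$, while the sixteen disks of radius $\frac{1}{2}$ contribute total area $4\pi$, giving $\delta=4\pi/(6\sqrt{3}+11)$. The equilibrium stress on $\T^2(\Lambda)$ comes from a planar reciprocal: assign each edge the positive weight equal to the distance between the circumcenters of its two adjacent regular faces, which yields a positive equilibrium stress at every vertex by the standard reciprocal-figure argument.

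The heart of the proof is infinitesimal rigidity of the bar framework on every cover $\T^2(\Lambda')$, for which I would work with periodic infinitesimal edge flexes via Proposition~\ref{prop:edgeflex}. Writing each edge flex as $\e_k'=\alpha_k R(\pi/2)\e_k$ for a scalar $\alpha_k$, Lemma~\ref{lem:triangle} forces all three rotations around every triangle to agree and Lemma~\ref{lem:rhombus} forces opposite rotations across every square to agree. Propagating these local equalities through the triangles and squares around each dodecagon---the bolded edges in Figure~\ref{fig:dodecagon} illustrate exactly this step---collapses the twelve edges of each dodecagon into at most two parallel classes. The face-closure condition in item~1 of Proposition~\ref{item:PEF_faceDecomp} applied to the dodecagon then equates those two classes, so each dodecagon rotates rigidly; and because adjacent dodecagons are linked through the intervening triangles and squares, the whole framework carries at most two independent rotations $\alpha$ and $\beta$, one for each triangular sublattice of edge directions. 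Finally, items~2 and~3 of Proposition~\ref{item:PEF_faceDecomp}, applied to paths in $\T^2(\Lambda')$ realizing the two generators of $\Lambda'$, yield two equations $\alpha\vect{x}_1+\beta\vect{x}_2=\vect{0}$ with $\vect{x}_1,\vect{x}_2$ sums of edge vectors from distinct triangular sublattices, hence linearly independent. This forces $\alpha=\beta=0$ and the flex is trivial.

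The main obstacle is verifying that this two-parameter reduction propagates across an arbitrary sublattice $\Lambda'$ and not just the $2\times 2$ cover of Figure~\ref{fig:dodecagon}. The saving point is that the equalities from the triangle and rhombus lemmas and from the dodecagon face closure are entirely local to a single face, so they lift unchanged to any periodic cover; therefore the only bookkeeping is to check that every edge of a fundamental domain for $\Lambda'$ is linked to a common pair $(\alpha,\beta)$ by a chain of such local relations. Linear independence of $\vect{x}_1$ and $\vect{x}_2$ depends only on the two generators of $\Lambda'$ being linearly independent in $\E^2$, which they always are.
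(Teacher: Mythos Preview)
Your proposal follows essentially the same route as the paper: the density count, the reciprocal-figure stress, and the edge-flex analysis via Lemmas~\ref{lem:triangle} and~\ref{lem:rhombus} together with the dodecagon face-closure are all exactly what the paper does. Two small points deserve attention. First, the paper's convention is that a stress on a strut tensegrity has \emph{negative} weights, so the reciprocal-figure stress should be taken with a minus sign; your ``positive equilibrium stress'' is a slip of convention. Second, and more substantively, your final sentence (``linear independence of $\vect{x}_1$ and $\vect{x}_2$ depends only on the two generators of $\Lambda'$ being linearly independent'') is not a valid justification: $\vect{x}_1$ and $\vect{x}_2$ are not the lattice generators but the class-by-class edge sums along a chosen path, and there is no a priori reason one of them could not vanish along some path in some cover. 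The paper sidesteps this by observing that every period $\Lambda'$ contains an integer multiple of a fixed basis vector of the finest lattice, computing $\vect{x}_1$ and $\vect{x}_2$ explicitly along that particular direction, and checking directly that they are linearly independent. With that fix your argument goes through.
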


\begin{proof}
It is simple to count $12$ equilateral triangles, $5$ squares, and $1$ regular dodecagon as the faces of the graph. There are thus $34$ edges, and $16$ vertices. If the packing radius is $\frac{1}{2}$, then the total area of the disks is $4\pi$, and the area of the torus is then $6\sqrt{3}+11$. Thus, the density is $\delta = \frac{4\pi}{6 \sqrt{3} + 11}$.

By Theorems~\ref{thm:col-jammed-rigid},~\ref{thm:rigid-inf-rigid},~and~\ref{thm:bar-stress}, it suffices to show that there exists a negative equilibrium stress and that the bar framework is infinitesimally rigid on all finite covers. Consider the dual graph with vertices at the center of each face and edges joining the centers of two adjacent faces. Assign each edge $\e_k$ a stress $\omega_k$ of the negative magnitude of the corresponding edge on its dual. This gives an equilibrium stress that works for any finite cover.

\begin{comment}
\begin{figure}
\begin{center}
\includegraphics[width=0.6\textwidth]{\figures/1Layer-Dodecagon_2x2_Tile_Bolded}
\caption{Edges with equal infinitesimal rotations through Lemmas~\ref{lem:triangle}~and~\ref{lem:rhombus} on a 2 by 2 sublattice.}\label{fig:dodecagon_edge}
\end{center}
\end{figure}
\end{comment}

We now consider a single dodecagon in this framework. We label the rightmost edge of the dodecagon $\e_1$, the edge adjacent counterclockwise to $\e_1$ $\e_2$, and so on up to $\e_{12}$. We label the corresponding infinitesimal edge rotations $\alpha_1, \dots, \alpha_{12}$. By considering the geometry seen in Figure \ref{fig:dodecagon} and using Lemmas \ref{lem:triangle} and \ref{lem:rhombus}, we find that $\alpha_{12} = \alpha_2 = \alpha_4$, $\alpha_3 = \alpha_5 = \alpha_7$, $\alpha_6 = \alpha_8 = \alpha_{10}$, and $\alpha_9 = \alpha_{11} = \alpha_1$. Verifying these equalities is a simple and beautiful task.

Next, we know that edge flexes around the dodecagon are zero; we find that 
\[0 = R(\pi/2)(\alpha_{12}( \e_{12} + \e_2 + \e_4) + \alpha_3(\e_3 + \e_5 + \e_7)+ \alpha_6(\e_6 + \e_8 + \e_{10}) + \alpha_9(\e_9 + \e_{11} + \e_1))\]
The rotation matrix is invertible, so we can ignore it in this equation. Remembering that opposite sides of the dodecagon are parallel, this reduces to
\[ 0 = (\alpha_{12} - \alpha_6)(\e_{12} + \e_2 + \e_4) + (\alpha_3 - \alpha_9)(\e_3 + \e_5 + \e_7)\]
But $(\e_{12} + \e_2 + \e_4)$ and $(\e_3 + \e_5 + \e_7)$ are linearly independent, so we find that $\alpha_{12} = \alpha_6$ and $\alpha_3 = \alpha_9$. 
In other words, the infinitesimal rotations of the even edges are all equal and the infinitesimal rotations of the odd edges are all equal. Again by considering the geometry seen in Figure \ref{fig:dodecagon} and using Lemmas \ref{lem:triangle} and \ref{lem:rhombus}, we find that the even infinitesimal rotation for any dodecagon equals the even infinitesimal rotation for any other dodecagon and that the odd infinitesimal rotation for any dodecagon equals the odd infinitesimal rotation for every other dodecagon.

Next, no matter what period $\Lambda$ we choose, $\Lambda$ will contain some integer multiple $k$ of the vector $(\sqrt{3} + 1, -\sqrt{3} -2)$, a basis vector for the finest period lattice. The sum of the edge flexes along this path will be $kR(\pi/2)(\alpha_{12} (1, - \sqrt{3}) + \alpha_3( \sqrt{3}, -2))$. For this to equal zero, we find $\alpha_{12} = \alpha_3 = 0$. Then the only periodic infinitesimal edge flex of the bar framework is trivial, and we are done.

\end{proof}

This packing is clearly not strictly jammed, and by perturbing the lattice we find slightly denser packings on a continuum of lattices. By a more general version of the argument used in Proposition \ref{prop:dodecagon}, we find that these packings will also be consistently collectively  jammed. Now, by scaling the packing and taking an appropriate sublattice, we find a packing with density $\delta = \frac{4\pi}{6 \sqrt{3} + 11}$ on a lattice arbitrarily close to any preselected lattice. By slightly perturbing this packing, we find that, on any lattice, there is a consistently collectively jammed packing with density $\delta < \frac{4\pi}{6 \sqrt{3} + 11} + \epsilon$ for any $\epsilon > 0$. This is, in particular, an unexpectedly low density for the \emph{triangular lattice}, the lattice generated by $(1,0)$ and $(-1/2, \sqrt{3}/2)$.

\subsection{A Packing First Unjammed on an Arbitrarily Large Period Lattice}
\label{subsec:20disks}
We now have the machinery to deal with the example mentioned at the end of Section \ref{sect:finite}

\begin{thm}
\label{thm:20disks}
For any $N$, there is a packing of twenty disks on a torus $\T^d(\Lambda)$ that is not consistently collectively jammed but which is collectively jammed with respect to $\Lambda'$ if $ | \Lambda / \Lambda' | < N$. In other words, there is a packing that satisfies $N < N_{\min} < \infty$
\end{thm}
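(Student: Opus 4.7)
The plan is to construct an explicit one-parameter family of $20$-disk packings $P_\theta$ on a two-torus $\T^2(\Lambda_\theta)$, where the parameter $\theta$ tunes a single phase-periodic mode of the bar framework of the contact graph, and then, given $N$, to choose $\theta = 2\pi/k$ with $k$ prime and $k > N$. The design strategy is to build the contact graph almost entirely out of triangles and rhombi so that, by Lemmas~\ref{lem:triangle} and~\ref{lem:rhombus}, the infinitesimal edge rotations inside each rigid block are forced to agree, leaving only a handful of independent rotation parameters. A small ``hinge'' region whose opening angle depends on $\theta$ then glues these rigid blocks together in such a way that the compatibility relations between the remaining parameters reduce to a low-degree polynomial equation in $\mu$, $\mu'$, and $\theta$.

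With this setup in place, I would use the edge-flex form of Proposition~\ref{prop:FourierDecomp_specific} to compute the RUM spectrum of $P_\theta$: a nontrivial phase-periodic flex at $(\mu, \mu')$ exists exactly when a pair of polynomials $p_\theta(\mu, \mu')$ vanishes. The geometry is to be arranged so that on each of the slices $\mu' = 1$ and $\mu = 1$ the only nontrivial unit-circle solution is $\mu = e^{\pm i\theta}$ (respectively $\mu' = e^{\pm i\theta}$). Setting $\theta = 2\pi/k$ then places $(e^{2\pi i/k}, 1)$ in the RUM spectrum, and Proposition~\ref{prop:FourierDecomp_specific} immediately produces a nontrivial complex flex on the sublattice $\Lambda'$ generated by $k\g_1$ and $\g_2$, so $P_\theta$ is not consistently collectively jammed. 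Conversely, any sublattice $\Lambda''$ of index strictly less than $N$ would require a phase $(\mu, \mu')$ consisting of roots of unity of order less than $k$, and by construction the only such pair in the RUM spectrum is $(1,1)$, whose associated flex is trivial by Proposition~\ref{prop:edgeflex}. Coupled with an equilibrium stress extracted from a dual-graph construction in the spirit of Proposition~\ref{prop:dodecagon}, Theorem~\ref{thm:col-jammed-bar-stress} then shows that $P_\theta$ is collectively jammed on every sublattice of index less than $N$, yielding $N < N_{\min} \leq k < \infty$.

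The main obstacle I anticipate is geometric realizability: one must verify that the prescribed combinatorial contact graph is actually the contact graph of a legitimate packing of $20$ non-overlapping disks for every sufficiently small $\theta$, with no accidental extra tangencies and no overlaps, and that the resulting metric constraints are consistent as $\theta$ ranges over a neighborhood of $0$. This is a delicate coordinatization problem whose detailed verification, together with the explicit expansion of the polynomials $p_\theta(\mu,\mu')$, is naturally deferred to the appendix, as the paper itself indicates.
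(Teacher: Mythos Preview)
Your overall architecture---edge-rotation analysis via Lemmas~\ref{lem:triangle} and~\ref{lem:rhombus}, phase-periodic flexes via Proposition~\ref{prop:FourierDecomp_specific}, and a stress from a reciprocal diagram---matches the paper's. But there is a real gap in the converse direction.

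You control the RUM spectrum only on the two coordinate slices $\mu'=1$ and $\mu=1$, asserting that the sole nontrivial unit-circle solution on each slice is $e^{\pm i\theta}$. That is not enough to conclude that ``the only [low-order] pair in the RUM spectrum is $(1,1)$.'' A sublattice $\Lambda''$ of index $M<N$ admits characters $(\mu,\mu')$ with \emph{neither} coordinate equal to $1$ (any pair of $M$-th roots of unity satisfying the relations $\mu^a\mu'^b=\mu^c\mu'^d=1$ for the generators of $\Lambda''$), and nothing in your slice description rules out such a pair lying on the RUM variety. In fact, for a packing governed by a single determinantal condition---which is exactly what your ``low-degree polynomial equation in $\mu,\mu'$'' produces---the RUM spectrum is generically a real curve in the $2$-torus, not a finite set; tuning where this curve meets the two slices says almost nothing about where else it passes.

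The paper closes this gap differently. It computes the \emph{full} RUM spectrum of its pentagon-based contact graph in closed form: the phase $(\mu,\mu')$ admits a nontrivial flex exactly when
\[
\frac{\operatorname{Re}(\mu')-1}{\operatorname{Re}(\mu)-1}=x,
\]
where the shape constant $x$ depends only on the pentagon angles. This single real equation describes the entire RUM curve, not just its slices. The argument is then a density/avoidance one: the finitely many values of $x$ attained by pairs $(\mu,\mu')$ with both orders below $N$ are excised, while the values attained by root-of-unity pairs remain dense in $\R^{\ge 0}$, so one may pick $x$ in the interval of collectively jammed realizations that is hit by some high-order pair and missed by every low-order pair. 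The paper does not aim for $N_{\min}=k$ exactly; it only needs $N<N_{\min}<\infty$. To repair your argument you would either have to produce an explicit global description of your RUM variety and run a similar avoidance argument, or engineer a construction whose RUM spectrum is genuinely zero-dimensional---a much stronger demand than anything you have articulated.
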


\begin{figure}[thbp]
\centering
\includegraphics[scale=0.3]{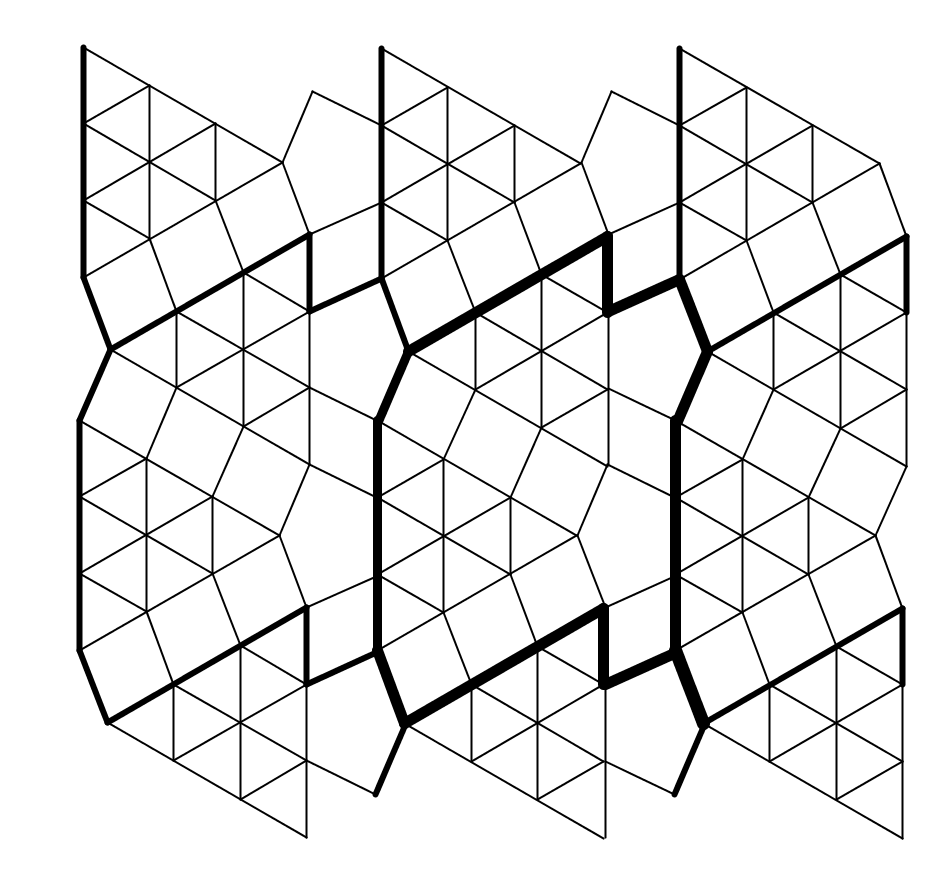}
\caption{Graph of the twenty-disk example. Some edges are bolded to emphasize the periodicity of the example.}
\label{fig:20disks}
\end{figure}

Our example is seen in Figure \ref{fig:20disks}. This packing is not strictly jammed, and by adjusting the lattice we can change the shape of the two pentagons in the lattice. The shape of the pentagon determines a crucial \emph{shape constant} $x$ that describes the flexibility properties of the packing, as we find that the packing will be flexible with respect to the phase $(\mu, \mu')$ if and only if
\begin{equation}
\label{eq:20disk_rou_flexes_new}
\frac{{\rm Re}(\mu')-1}{{\rm Re}(\mu)-1} =  x
\end{equation}

Through some technical work that we leave for the Appendix, we find that there is some interval $I\subset \R^{\ge0}$ so that for any $x\in I$ there is a collectively jammed packing with shape constant $x$. The values of $x$ arising from \eqref{eq:20disk_rou_flexes_new} are dense in $\R^{\ge0}$, and this clearly remains true if we ignore the finite number of cases where $\mu$ and $\mu'$ both have order less than $N$. Choosing one of these $x$ from the interval $I$, we find a packing with twenty disks that satisfies $N < N_{\min} < \infty$.

This packing is also interesting because its corresponding bar framework is never phase-periodically infinitesimally rigid but can be consistently infinitesimally rigid by choosing an $x$ that is never a solution to \eqref{eq:20disk_rou_flexes_new} if $\mu$ and $\mu'$ both have finite orders. 

\newpage

% conclusion maybe?

\bibliography{references}{}
\bibliographystyle{plain}

% This is appa.tex
%
% This file is for appendices.  
\appendix

\section{An inconsistently jammed twenty disk packing}

We now give a full proof of Theorem \ref{thm:20disks}.
\begin{thm*}
For any $N$, there is a packing of twenty disks on a torus $\T^d(\Lambda)$ that is not consistently collectively jammed but which is collectively jammed with respect to $\Lambda'$ if $ | \Lambda / \Lambda' | < N$. In other words, there is a packing that satisfies $N < N_{\min} < \infty$.
%OLD VERSION
%For any $N$, there is a collectively jammed packing of twenty disks on a torus $\mathbb{R}^2/\Lambda$ that is not universally jammed but which is jammed on the cover $\mathbb{R}^2/\Lambda'$ if $\Lambda / \Lambda' < N$.
\end{thm*}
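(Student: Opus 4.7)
The plan is to prove the theorem by analyzing the twenty-disk example depicted in Figure~\ref{fig:20disks}, working in dimension two and proceeding in three stages.

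First, I would parametrize the family of deformations of the packing by a nonnegative shape constant $x$ that measures the geometry of the two pentagonal holes as the period lattice is perturbed. Because the packing is manifestly not strictly jammed, there is a continuous family of valid disk realizations obtained by reshaping the pentagons while preserving the combinatorial contact graph; tracking this deformation gives a nontrivial open interval $I\subset \R^{\geq 0}$ of admissible $x$ values. A negative equilibrium stress for each such realization can be exhibited by assigning to each edge the length of the corresponding dual edge, exactly as in the proof of Proposition~\ref{prop:dodecagon}. Consequently the hypotheses of Proposition~\ref{prop:FourierDecomp_specific} are satisfied, so collective unjammedness on a sublattice $\Lambda'$ is equivalent to the existence of a nontrivial phase-periodic infinitesimal flex of the bar framework for some compatible phase $(\mu,\mu')$.

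Second, I would compute the phase-periodic infinitesimal flexes as a function of $x$ and of the phase $(\mu,\mu')$ using the edge-flex formalism of Section~8. By Proposition~\ref{item:PEF_faceDecomp}, the flex is determined by assigning an infinitesimal rotation $\alpha_k$ to each edge, subject to the zero-sum condition around each face together with two closure conditions arising from the phase-periodic boundary conditions \eqref{eq:phPerFlex_cond1} and~\eqref{eq:phPerFlex_cond2}. Lemmas~\ref{lem:triangle} and~\ref{lem:rhombus} collapse all triangle and rhombus edges into common rotations, and the two pentagons each produce one further linear relation among the remaining rotations. Eliminating the rotations and substituting into the two phase-closure conditions, I expect the system to reduce to the single scalar constraint
\begin{equation*}
\frac{{\rm Re}(\mu')-1}{{\rm Re}(\mu)-1} = x,
\end{equation*}
which is \eqref{eq:20disk_rou_flexes_new}. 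This is the main obstacle, since it requires careful bookkeeping of how rotations propagate across the pentagons and around the torus; I would relegate the full linear-algebra computation to the appendix.

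Third, I would assemble the pieces as follows. By Proposition~\ref{prop:FourierDecomp_specific}, the packing is collectively unjammed on a sublattice $\Lambda'$ exactly when there is a root-of-unity phase $(\mu,\mu')$ compatible with $\Lambda'$ satisfying the identity above. The set of values $\frac{{\rm Re}(\mu')-1}{{\rm Re}(\mu)-1}$ arising from phases with $|\mu|=|\mu'|=1$ is easily seen to be dense in $\R^{\geq 0}$, while the subset $F_N$ arising from phases with both $\mu$ and $\mu'$ of order at most $N$ is finite. Hence I can pick $x\in I\setminus F_N$ that is nonetheless equal to $\frac{{\rm Re}(\mu_0')-1}{{\rm Re}(\mu_0)-1}$ for some root-of-unity phase $(\mu_0,\mu_0')$, necessarily of order exceeding $N$. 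The corresponding disk packing is then collectively jammed with respect to every sublattice $\Lambda'$ with $|\Lambda/\Lambda'|<N$ but unjammed on the finite cover determined by $(\mu_0,\mu_0')$, yielding $N<N_{\min}<\infty$ as required.
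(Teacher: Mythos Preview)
Your outline matches the paper's approach closely, but there is one genuine gap. Exhibiting a negative equilibrium stress does \emph{not} by itself verify the hypothesis of Proposition~\ref{prop:FourierDecomp_specific}: that proposition assumes the packing is already collectively jammed, which by Theorem~\ref{thm:col-jammed-bar-stress} requires both the stress \emph{and} periodic infinitesimal rigidity of the bar framework. You never check the latter. This matters for the computation as well: in the paper's derivation of \eqref{eq:20disk_rou_flexes_new}, the pentagon edge labelled $r$ is shown to satisfy $\mu r=r=\mu' r$, so $r=0$ provided at least one of $\mu,\mu'$ is not $1$; the trivial phase $(1,1)$ must be ruled out separately, and that is precisely collective jamming. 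The paper handles this as an independent technical step, verifying for one explicit realization (specific angles $\alpha,\beta,\gamma,\delta,\phi$) that a certain $3\times 3$ determinant coming from the closure conditions of Proposition~\ref{item:PEF_faceDecomp} is nonzero, and then arguing that rigidity persists in a neighborhood. Without this, your stage-three argument cannot conclude that the packing is jammed on $\Lambda$ itself, let alone on sublattices with $|\Lambda/\Lambda'|<N$.

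A smaller point: the dual-graph stress from Proposition~\ref{prop:dodecagon} relied on the faces being regular, so that centers of adjacent faces are joined by a segment perpendicular to the shared edge. Here the pentagons are not regular, so that construction does not go through verbatim; the paper instead exhibits a concrete reciprocal figure for the specific realization. Also, in your stage two, each pentagon contributes a two-dimensional vector relation (not one scalar relation), and the four resulting scalar equations---two from each pentagon, with phase factors $\mu,\mu',\mu\mu'$ inserted in the second---are what produce the $4\times 4$ determinant that collapses to \eqref{eq:20disk_rou_flexes_new}; the lattice closure conditions (2) and (3) of Proposition~\ref{item:PEF_faceDecomp} are used only for the collective-jamming check, not for the phase computation.
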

\begin{proof}
We consider the packing with the contact graph seen in Figure \ref{fig:20disks}. The flexibility of the packing depends on the shape of the two pentagons in the contact graph. In particular, if this packing is collectively jammed, we find that roots of unity $(\mu, \mu')$ will give a phase-periodic flex if and only if
\begin{equation}
\label{eq:20disk_rou_flexes}
\frac{{\rm Re}(\mu')-1}{{\rm Re}(\mu)-1} =  \frac{\cot{(\delta-\alpha)}-\cot{(\gamma-\alpha)}}{\cot{(\beta-\alpha)}-\cot{(\gamma-\alpha)}}
\end{equation}
where $\delta$, $\gamma$, and $\beta$ are the angles of the pentagon as seen in Figure \ref{fig:20disks_pentagons}. 
\begin{figure}[hl]
\centering
\includegraphics[scale=0.25]{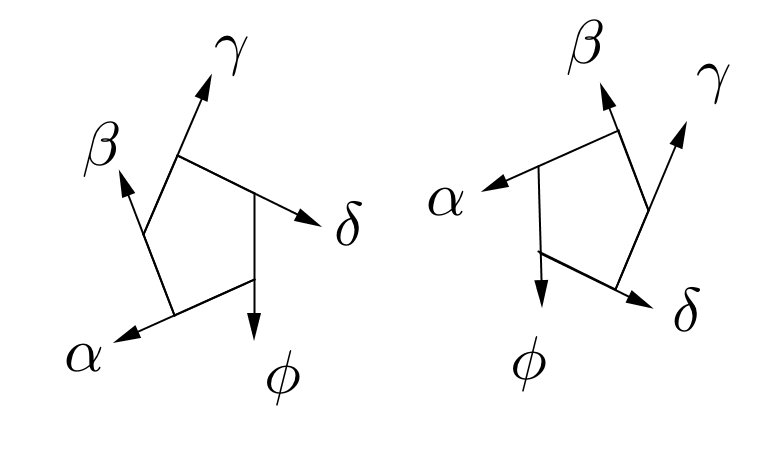}
\caption{The two pentagons. Vectors are labeled with their angle.}
\label{fig:20disks_pentagons}
\end{figure}
To prove this, we need to consider edge flexes. Suppose that there is an infinitesimal rotation hat is phase-periodic with respect to $(\mu, \mu')$; this is equivalent to a infinitesimal flex phase periodic with respect to $(\mu, \mu')$. We denote the five infinitesimal rotations of the lower pentagon in  the main cell of Figure \ref{fig:20disks_pentagons} by $a$, $b$, $c$, $d$, and $r$. Phase periodicity forces the same pentagon in the other cells to equal this times some product of powers of $\mu$ and $\mu'$, as seen in Figure \ref{fig:20disks_withEdgeFlexes}.

We start by assuming that this packing is collectively jammed. Then we only need to consider the case where one of $\mu$, $\mu'$ does not equal one. But this forces $r$ to equal zero, as $\mu'r= r$ and $\mu r = r$, as seen in Figure \ref{fig:20disks_rWalk}.
\begin{figure}[thbp]
\centering
\includegraphics[scale=0.3]{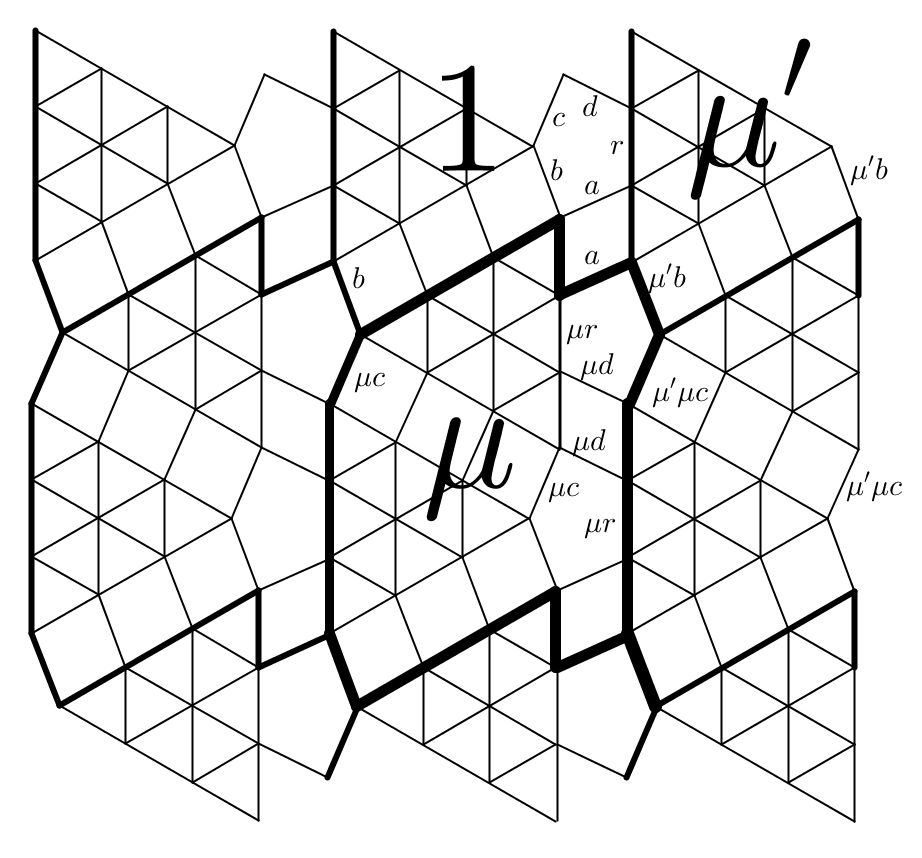}
\caption{Contact graph for twenty-disk example with edge flexes labeled.}
\label{fig:20disks_withEdgeFlexes}
\end{figure}
\begin{figure}[htbp]
\centering
\includegraphics[scale=0.33]{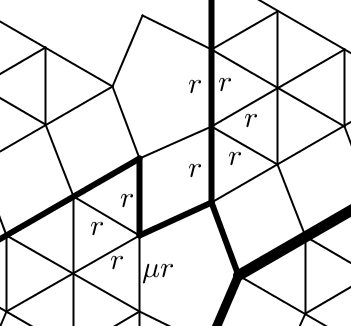}
\caption{$\mu r=r$}
\label{fig:20disks_rWalk}
\end{figure}
The flexes must sum to zero around the two pentagons, so we get
\begin{eqnarray}
\label{eq:20diskEF_cond1}
a\cos{\alpha} + b\cos{\beta} + c\cos{\gamma} + d\cos{\delta} + r\cos{\phi} = 0 \\
\label{eq:20disksEF_cond2}
a\sin{\alpha} + b\sin{\beta} + c\sin{\gamma} + d\sin{\delta} + r\sin{\phi} = 0 \\
a\cos{\alpha} + b\mu'\cos{\beta} + c\mu'\mu\cos{\gamma} + d\mu\cos{\delta} + r\cos{\phi} = 0 \\
a\sin{\alpha} + b\mu'\sin{\beta} + c\mu'\mu\sin{\gamma} + d\mu\sin{\delta} + r\sin{\phi} = 0
\end{eqnarray}
We have $r=0$, and without loss of generality we can take $\alpha=0$. Then there is an edge flex if and only if
\[ \left(
\begin{array}{cccc}
1  &  \cos{\beta} &  \cos{\gamma}  & \cos{\delta} \\
0  &  \sin{\beta}   &  \sin{\gamma}   & \sin{\delta}  \\
1 & \mu'\cos{\beta} & \mu'\mu\cos{\gamma} & \mu\cos{\delta} \\
0 & \mu'\sin{\beta} & \mu'\mu\sin{\gamma} & \mu\sin{\delta}
\end{array} \right) \]
is singular, or equivalently if it has zero determinant. The determinant of this equals
\[ \left|
\begin{array}{cccc}
1  &  \cos{\beta} &  \cos{\gamma}  & \cos{\delta} \\
0  &  \sin{\beta}   &  \sin{\gamma}   & \sin{\delta}  \\
0 & (\mu'-1)\cos{\beta} & (\mu'\mu-1)\cos{\gamma} & (\mu-1)\cos{\delta} \\
0 & (\mu'-1)\sin{\beta} & (\mu'\mu-1)\sin{\gamma} & (\mu-1)\sin{\delta}
\end{array} \right|
\] or
\[
\left|
\begin{array}{ccc}
\sin{\beta}   &  \sin{\gamma}   & \sin{\delta}  \\
(\mu'-1)\cos{\beta} & (\mu'\mu-1)\cos{\gamma} & (\mu-1)\cos{\delta} \\
(\mu'-1)\sin{\beta} & (\mu'\mu-1)\sin{\gamma} & (\mu-1)\sin{\delta}
\end{array} \right|
\]
which equals
\[\sin{\beta}\sin{\gamma}\sin{\delta}
\left|
\begin{array}{ccc}
1  &  1  & 1  \\
(\mu'-1)\cot{\beta} & (\mu'\mu-1)\cot{\gamma} & (\mu-1)\cot{\delta} \\
(\mu'-1) & (\mu'\mu-1) & (\mu-1)
\end{array} \right|
\]
If $\beta$, $\gamma$ or $\delta$ are multiples of $\pi$, we do not get a well-defined packing, so we remove the leading sine terms to this product. Taking the determinant now, we find that the condition for a flex is
\begin{eqnarray*}
(1-\mu'\mu)(1-\mu)(\cot{\gamma}-\cot{\delta}) + \\
(1-\mu)(1-\mu')(\cot{\delta}-\cot{\beta}) + \\
(1-\mu')(1-\mu'\mu)(\cot{\beta}-\cot{\gamma}) = 0
\end{eqnarray*}
or rather
\[A\mu'^2+B\mu'+C = 0\]
where $A = C = \mu(\cot{\beta}-\cot{\gamma})$ and $B$ equals
\[\mu\left(-2\cot{\beta} + (2-\frac{1}{\mu}-\mu)\cot{\delta} + (\frac{1}{\mu}+\mu)\cot{\gamma}\right)\]
Take out a factor of $\mu$, and note that $1/\mu + \mu = 2{\rm Re}(\mu)$ for $|\mu| = 1$. This makes $A$, $B$, and $C$ all real. Then clearly
\[\mu' = \frac{-B}{2A} \pm \sqrt{\left(\frac{B}{2A}\right)^2 - 1}\]
which is a well-formed expression since $A=0$ leads to degenerate packings. The product of the two roots of this quadratic is one. Notice that if $|\frac{B}{2A}| \le1$, then the two roots from this are complex conjugates, which forces the root to satisfy $\mu'\overline{\mu'} = 1$, so that $\mu'$ has absolute value one. If $|\frac{B}{2A}| > 1$, then neither root can have absolute value one. Then we get that there is a flex if and only if
\[{\rm Re}(\mu') = \frac{-B}{2A} = \frac{\cot{\beta} - {\rm Re}(\mu)\cot{\gamma} - (1- {\rm Re}(\mu))\cot{\delta}}{\cot{\beta}-\cot{\gamma}}\]
and this leads us to \eqref{eq:20disk_rou_flexes} by taking angles relative to $\alpha$.

It is clear that the values of
\[x = \frac{{\rm Re}(\mu')-1}{{\rm Re}(\mu)-1}\]
are dense in $\mathbb{R}^{\ge0}$ and remain dense if the finite number of cases where both $\mu$ and $\mu'$ have order less than $N$ are removed. Then to prove the theorem we only need to show that there is some interval $I$ in $\mathbb{R}^{\ge0}$ so that for any $x \in I$ there is some tuple $(\alpha, \beta, \gamma, \delta, \phi)$ that leads to a collectively jammed packing and which satisfies
\begin{equation}
\label{eq:20disks_keyConstant}
x = \frac{\cot{(\delta-\alpha)} - \cot{(\gamma-\alpha)}}{\cot{(\beta-\alpha)} - \cot{(\gamma-\alpha)}}
\end{equation}
To finish the theorem, we need to analyze a specific \emph{realization} of this contact graph, to borrow a term from the theory of frameworks \cite{Ross11, Bor10}. Take $AF = \arcsin\left(\frac{4}{5}\right)$, $AT = \arcsin\left(\frac{3}{10}\right)$. We consider the packing given by angles
\begin{eqnarray*}
\alpha=& 0 \\
\beta =& -AF - AT \\
\gamma =& AF - AT + \pi \\
\delta =& -2AT + \pi \\
\phi =& \frac{\pi}{2} - AT \\
\end{eqnarray*}

There are five independent, technical properties that we need to show this packing satisfies to prove the theorem. 

First, we need to show that these angles lead to a packing with the contact graph seen in Figure \ref{fig:20disks}. This is easy. The angles can describe an equilateral pentagon, for
\[e^{i\alpha} + e^{i\beta}+e^{i\gamma} + e^{i\delta} + e^{i\phi} = 0\]
and the resultant rhombi all have acute angles greater than $\pi/3$.

Second, we need to show that the angles give a positive value of $x$, but this also is trivial. By calculation, $x \approx 1.619$.

Third, we need to show that this packing has a proper negative stress. To do this, we find a reciprocal diagram in Figure \ref{fig:20disk_345stress} for a portion of the contact graph, and this reciprocal diagram will tessalate to give a reciprocal diagram for the whole packing. We generate a stress by assigning each edge in the contact graph a stress equal to negative the length of the corresponding edge in the reciprocal graph.

\begin{figure}[t]
\centering
\includegraphics[scale=0.3]{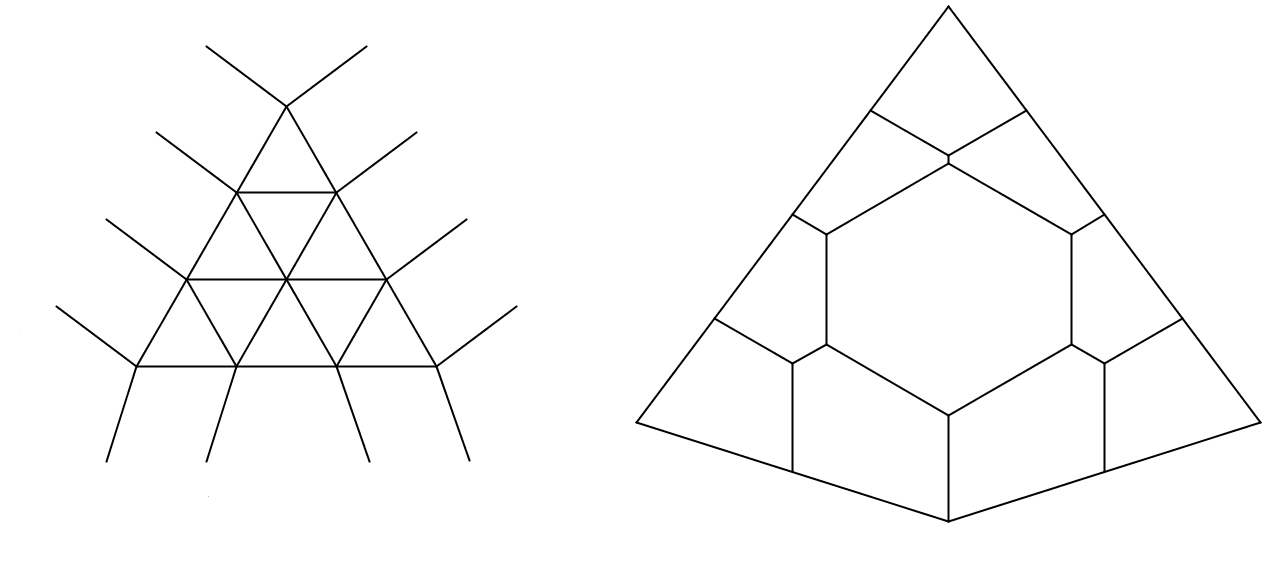}
\caption{A portion of the contact graph and a reciprocal graph.}
\label{fig:20disk_345stress}
\end{figure}

Fourth, we need to show that this packing is infinitesimally rigid as a bar framework. By Theorem \ref{thm:bar-stress}, this and the third property will prove this packing collectively jammed. We need four new conditions, and these come from considering the second and third conditions of Proposition \ref{item:PEF_faceDecomp}. Taking $\g_1$ to be the nearly vertical lattice generator, we find that the second condition gives
\begin{equation}
\label{eq:20disk_vertTour_cond1}
4r\cos{\phi} + d\cos{\delta} + a = 0
\end{equation}
and
\[4r\sin{\phi} + d\sin{\delta} = 0\]
while taking the horizontal lattice generator to be $\g_2$, we find the third condition gives
\[3r\cos{(\phi+\pi/3)}+c\cos{\gamma}+d\cos{\delta}-r\cos{\phi} = 0\]
and
\[3r\sin{(\phi+\pi/3)}+c\sin{\gamma}+d\sin{\delta} - r\sin{\phi} = 0\]
Together with \eqref{eq:20disksEF_cond2}, we get five equations on five unknowns. We solve for $a$ with \eqref{eq:20disk_vertTour_cond1} and for $b$ with \eqref{eq:20disksEF_cond2}, and this leaves us with three equations and three unknowns. The bar framework's infinitesimal rigidity then follows from the determinant of a $3\times3$ matrix being nonzero.

Fifth and finally, we need to show that $x$ is not constant in any neighborhood around this packing. To do this, we consider the differentials of the angles that arise from ``squishing'' the pentagon while maintaining its bilateral symmetry. Letting $\alpha$ be our independent variable and holding $\phi$ fixed, we find derivatives:
\[ \frac{d\phi}{d\alpha} = 0 \ \ \ \ \ \ \ \ \ \frac{d\beta}{d\alpha} = \frac{-6}{\sqrt{91}} \ \ \ \ \ \ \ \ \ \ \frac{d\gamma}{d\alpha} = \frac{6}{\sqrt{91}} \ \ \ \ \ \ \ \ \ \frac{d\delta}{d\alpha} = -1\]

Using the chain rule on \eqref{eq:20disks_keyConstant}, we find $\frac{dx}{d\alpha} \ne 0$. Then $x$ is not constant in any neighborhood of the packing, and since the packing remains collectively jammed in some neighborhood, and since $x$ changes continuously, we know that $x$ can attain any value in some interval. Since the values of $x$ that arise from the roots in $(\mu, \mu')$ having order greater than or equal to $N$ are dense in $\R^{\ge0}$, we find that there is some $x$ in this interval corresponding to a packing first flexible on a sublattice satisfying $\Lambda/\Lambda' \ge N$. This establishes the theorem.
\end{proof}

\end{document}